\def\pd#1{\dfrac{\partial}{\partial #1}}
\def\f#1#2{\frac{#1}{#2}}
\def\pa{\partial}
\def\n{\nabla}
\def\({\left (}
\def\){\right )}
\def\<{\langle}
\def\>{\rangle}
\def\ma{|\mathring{A}|^2}
\def\ch{\mathring{h}_{ij}}
\newcommand{\bel}[1]{\begin{equation}\label{#1}}
  \newcommand{\beq}{\begin{equation}}
\newcommand{\ba}{\begin{eqnarray}}
\newcommand{\ea}{\end{eqnarray}}
\newcommand{\rf}[1]{(\ref{#1})}
\newcommand{\qe}{\end{equation}}
\newcommand{\eeq}{\end{equation}}
\newtheorem{thm}{Theorem}[section]
\newtheorem{lem}[thm]{Lemma}
\newtheorem{prop}[thm]{Proposition}
\newtheorem{defn}[thm]{Definition}
\newtheorem{claim}{Claim}[section]
\newtheorem*{acknowledge}{Acknowledgement}
\newcommand{\norm}[1]{\left\Vert#1\right\Vert}
\newcommand{\abs}[1]{\left\vert#1\right\vert}
\newcommand{\set}[1]{\left\{#1\right\}}
\newcommand{\eps}{\varepsilon}
\newcommand{\red}[1]{ {\color{red} #1} }
\newcommand{\blue}[1]{ {\color{blue} #1} }
\newcommand{\tr}{\mbox{\rm tr\,}}
\title[harmonic mean curvature surface]{Foliation of constant harmonic mean curvature surfaces in asymptotically Schwarzschild spaces}
\keywords{volume preserving harmonic mean curvature flow; asymptotically Schwarzschild space; foliation; center of mass}
\author{Yaoting Gui, Yuqiao Li, Jun Sun}
\address{Yaoting Gui, Beijing International Mathematical Research Center, Peking University, Beijing, 100087, P.R.China}
\address{Yuqiao Li, Department of Mathematics, Hefei University of Technology, Hefei, 230009, P.R.China}
\address{Jun Sun, School of Mathematics and Statistics, Wuhan University, Wuhan, 430072, P. R. of China}
\email{ytgui@bicmr.pku.edu.cn, lyq112@mail.ustc.edu.cn, sunjun@whu.edu.cn}
\thanks {2020 Mathematics Subject Classification. 51F99,31E05}
\thanks{The second author is supported by Initial Scientific Research Fund of Young Teachers in Hefei university of Technology of No.JZ2024HGQA0122; The third author is supported by NSFC No. 12071352, 12271039}
\begin{document}

\begin{abstract}
    This paper investigates the volume-preserving harmonic mean curvature flow in asymptotically Schwarzschild spaces. We demonstrate the long-time existence and exponential convergence of this flow with a coordinate sphere of large radius serving as the initial surface in the asymptotically flat end, which eventually converges to a constant harmonic mean curvature surface. We also establish that these surfaces form a foliation of the space outside a large ball. Finally, we utilize this foliation to define the center of mass, proving that it agrees with the center of mass defined by the ADM formulation of the initial data set.
\end{abstract}

\maketitle

\section{Introduction}
	\allowdisplaybreaks

 A spacelike timeslice has the structure of a complete Riemannian three-manifold
 with an asymptotically flat end in the description of isolated gravitating systems in general relativity. Thus, in this paper, we are concerned with the so-called asymptotically Schwarzschild space. Precisely, a complete Riemannian manifold $(N, \bar{g})$ is said to be an asymptotically Schwarzschild space, if $(N, \bar{g})$ is a 3-manifold where $N$ is diffeomorphic to $\mathbb{R}^n\backslash B_1(0)$ and the metric $\bar{g}$ is asymptotically flat, namely,
\begin{equation}\label{metr}
\bar{g}_{\alpha\beta}=\left(1+\frac{m}{2r}\right)^4\delta_{\alpha\beta}+P_{\alpha\beta}, 
\end{equation} 
where $r$ is the Euclidean distance, $m>0$ is a constant representing the ADM mass and $P_{\alpha\beta}$ satisfies
\[ |\partial^lP_{\alpha\beta}|\leq C_{l+1}r^{-l-2}, \quad 0\leq l\leq5, \]
with $\partial$ denoting partial derivatives with respect to the Euclidean metric $\delta$. For notation convenience, we in the sequel set $C_0=\max(1, m, C_1, C_2, C_3, C_4, C_5, C_6)$.

The existence of foliation by special surfaces in an asymptotically Schwarzschild space is a significant problem in general relativity. 
The foliation offers an intrinsic geometric structure near infinity and provides a definition of the center of mass associated with an isolated physical system.
It defines a natural coordinate system near infinity which is extensively used in spacetime, see \cite{CK}.

In their seminal paper, Huisken-Yau (\cite{Huisken1996DefinitionOC}) constructed a foliation of stable constant mean curvature (CMC) surfaces in an asymptotically Schwarzschild space with $m>0$. 
These surfaces arise from the volume preserving mean curvature flow introduced by Huisken (\cite{Huisken1987TheVP}).  They also proved that the foliation is unique outside a large compact set depending on the mean curvature and defined the geometric center of mass based on their foliation. The uniqueness was further extended by Qing-Tian (\cite{QingTian}) saying that the foliation is actually unique outside of a fixed compact set, which we refer to as the global uniqueness.
Corvino-Wu proved that the geometric center of mass of the foliation is the ADM center of mass if the metric is conformally flat at infinity in \cite{Corvino}. 
The conformal flatness condition later is  removed by Huang in \cite{Huang09}.

Ye provided a different proof of the existence of CMC surfaces in \cite{Ye}.
Metzger \cite{Metzger} later generalized Huisken-Yau's result to asymptotically flat manifolds with a slightly weaker asymptotic condition, namely, the metric $\bar{g}$ in \eqref{metr} satisfies $ |\partial^lP_{\alpha\beta}|\leq C_{l+1}r^{-l-1-\epsilon}, \quad 0\leq l\leq2$ for $\epsilon\geq0$.
However, the metric there should be assumed to be rotationally symmetric while the ADM center of mass can be defined for asymptotically flat manifolds satisfying the Regge-Teitelboim (RT) condition, which is interpreted as an asymptotically symmetric condition.
Huang relaxed the symmetric assumption and proved that the CMC foliation exists in the exterior region of an asymptotically flat manifold satisfying the RT condition in \cite{Huang}.
Nerz extended the existence and uniqueness of CMC foliation in 3-dimensional asymptotically flat manifolds without asymptotic symmetry in \cite{Nerz}.
Furthermore, the corresponding result was established by Eichmair-Merzger for dimension greater than three in \cite{Eich-Met}. Finally, we also notice that the global uniqueness of the CMC foliation is characterized by Chodosh-Eichmair in \cite{Cho-Eich}.

The CMC foliation is not the only foliation utilized in mathematical general relativity.
For instance, Lamm-Metzger-Schulze establish the existence of a foliation of the asymptotically Schwarzschild space with positive mass by surfaces which are critical points of the Willmore functional subject to an area constraint in \cite{Lamm}. 
The geometric center of mass of the foliation by the area constrained Willmore surfaces is also shown to agree with the ADM center of mass if the scalar curvature is asymptotically vanishing \cite{Willmore}. 

In this paper, we will establish a novel foliation by surfaces of constant harmonic mean curvature in an asymptotically Schwarzschild space with $m>0$. These surfaces arise from the harmonic mean curvature flow (HMCF), which has been investigated in the Euclidean space \cite{MR2287150}\cite{MR2564404}. For non-Euclidean space, Xu explored the unnormalized harmonic mean curvature flow in his thesis (\cite{Xuguoyi}). More general mixed volume preserving curvature flows are investigated in Euclidean space in \cite{McCoy2005MixedVP} and in hyperbolic space in \cite{Wang-xia}.

Different from their attempt, we will examine the volume preserving harmonic mean curvature flow in asymptotically Schwarzschild spaces as a generalization of Huisken-Yau's result \cite{Huisken1996DefinitionOC}. To be precise, we first introduce some fundamental notion. 
Let $S_\sigma(0)$ be the coordinate sphere of radius $\sigma$ centered at the origin in $(N,\bar{g})$ given by a map
$$
\phi_0^\sigma:S^2\to N,\quad\phi_0^\sigma(S^2)=S_\sigma(0).
$$
We aim to find a one-parameter family of maps $\phi_t^\sigma=\phi^\sigma(\cdot,t)$ solving the initial value problem
\bel{flow}
\begin{cases}
\frac{d}{dt}\phi^{\sigma}(p,t)&=(f-F)\nu(p,t),\quad t\geq0,\:p\in S^{2},\\
\phi^{\sigma}(0)&=\phi_{0}^{\sigma},
\end{cases}
\qe
where $F(p, t)=\frac{H^2-|A|^2}{2H}$ and $\nu(p, t)$ represent the harmonic mean curvature and the outward unit normal vector of the surface $\Sigma_t=\phi_t^\sigma(S^2)$ respectively. Here we denote $|\Sigma_t|$ and $f(t)=\frac{\int_{\Sigma_t}Fd\mu_t}{|\Sigma_t|}$ the area and the average of the harmonic mean curvature.

The flow described by equation \eqref{flow} preserves the volume of the region enclosed by $\Sigma_t$ and $S_{\frac{m}{2}}(0)$. And we designate this flow \eqref{flow} as the {\em volume preserving harmonic mean curvature flow}. 
It should be noted that, owing to the additional curvature terms, there is no monotonic quantity associated with this flow.

The first result of this paper establishes the long time existence and exponential convergence of the flow \eqref{flow}, which converges to a constant harmonic mean curvature surface.

\begin{thm}\label{main}
    If $(N, \bar{g})$ is the asymptotically Schwarzschild manifold with metric \eqref{metr}, then there is $\sigma_0$ depending only on $C_0$ such that for all $\sigma\geq\sigma_0$, the volume preserving harmonic mean curvature flow \eqref{flow} has a smooth solution for all times $t\geq0$.
    As $t\rightarrow\infty$, the surfaces $\Sigma_t$ converge exponentially fast to a surface $\Sigma_{\sigma}$ of constant harmonic mean curvature $f_{\sigma}$.
\end{thm}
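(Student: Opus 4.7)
My approach mirrors Huisken--Yau's treatment of the volume-preserving mean curvature flow, adapted to the quasilinear operator $F=(H^{2}-|A|^{2})/(2H)$. On a large coordinate sphere $S_{\sigma}\subset(N,\bar g)$ the principal curvatures are both positive of order $\sigma^{-1}$, so $F$ is smooth and the linearization $\dot F^{ij}=\partial F/\partial h_{ij}$ is positive definite. Short-time existence of \eqref{flow} therefore follows from standard quasilinear parabolic theory, while differentiating the enclosed volume gives $\int_{\Sigma_{t}}(f-F)\,d\mu_{t}=0$ by definition of $f$, confirming the volume-preserving property. The goal is to establish $\sigma$-independent a priori estimates that allow continuation of the flow for all time and force exponential convergence.

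\textbf{A priori estimates.} The heart of the argument is a roundness/pinching estimate of the shape
\begin{equation*}
    |\mathring{A}|^{2}\le C(C_{0})\,\sigma^{-4-\delta}\qquad\text{for some }\delta>0,
\end{equation*}
established by combining the evolution equation for $|\mathring{A}|^{2}$ under \eqref{flow} with integral interpolation against $(F-f)^{2}$ and careful absorption of the subprincipal terms arising from $P_{\alpha\beta}$ in \eqref{metr}. This pinching keeps $\Sigma_{t}$ strictly convex and yields uniform ellipticity of $\dot F^{ij}$. Together with a barrier argument using coordinate spheres $S_{\sigma\pm C_{1}}$ as sub- and supersolutions of \eqref{flow}, it traps $\Sigma_{t}$ in a fixed annular neighborhood of $S_{\sigma}$, so the surface remains a small $C^{1}$-graph over a coordinate sphere. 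Higher-order bounds $\|\nabla^{m}A\|_{\infty}\le C_{m}$ then follow from Krylov--Safonov and Schauder theory for quasilinear parabolic equations on such graphs, and the standard continuation argument yields long-time existence.

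\textbf{Exponential convergence.} With the geometric estimates in place one derives an energy inequality
\begin{equation*}
    \frac{d}{dt}\int_{\Sigma_{t}}(F-f)^{2}\,d\mu_{t}\le -\frac{c}{\sigma^{2}}\int_{\Sigma_{t}}(F-f)^{2}\,d\mu_{t}
\end{equation*}
by testing the evolution equation for $F$ against $F-f$, integrating by parts using the symmetry of $\dot F^{ij}$, and invoking a Poincar\'e-type inequality reflecting the order-$\sigma^{-2}$ spectral gap of the linearized operator on nearly round surfaces. Interpolating this $L^{2}$ decay against the uniform higher-derivative bounds upgrades it to exponential convergence in every $C^{k}$ to a limit surface $\Sigma_{\sigma}$ of constant harmonic mean curvature $f_{\sigma}$.

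\textbf{Main obstacle.} The principal difficulty is the roundness estimate: unlike the mean curvature case, the second-order operator naturally acting on $|\mathring{A}|^{2}$ is the weighted $\dot F^{ij}\nabla_{i}\nabla_{j}$, and its reaction terms couple nonlinearly with the ambient curvature. One must verify that the $O(r^{-3})$ perturbations introduced by $P_{\alpha\beta}$ are genuinely dominated by the diffusive gain, which is exactly where the threshold $\sigma_{0}(C_{0})$ enters; maintaining strict convexity of $\Sigma_{t}$ along the flow (needed for parabolicity of $F$) is a simultaneous output of the same analysis.
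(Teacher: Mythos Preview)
Your overall architecture---round initial data, pinching via the evolution of $|\mathring A|^2$, position control, higher regularity, then an $L^2$ energy for $F-f$---matches the paper's. But the exponential-convergence step contains a genuine gap: the claimed inequality
\[
\frac{d}{dt}\int_{\Sigma_t}(F-f)^2\,d\mu_t \le -\frac{c}{\sigma^{2}}\int_{\Sigma_t}(F-f)^2\,d\mu_t
\]
is \emph{false}. Differentiating $\int (F-f)^2$ and using the evolution $\partial_t F=\mathcal L F-(f-F)\bigl(F^{kl}h_{mk}h_{ml}-F^{kl}\bar R_{3k3l}\bigr)$ produces a diffusive gain of order $\sigma^{-2}$ from the Poincar\'e gap of $\mathcal L$, but also a \emph{reaction term} $2(F^{kl}h_{mk}h_{ml})(F-f)^2=2\cdot 2F^2(F-f)^2$ with the \emph{same} leading order $\sigma^{-2}$ and opposite sign. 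These two $\sigma^{-2}$ contributions cancel exactly; what survives is only
\[
\frac{d}{dt}\int_{\Sigma_t}(F-f)^2\,d\mu_t \le \Bigl(-\frac{3m}{\sigma^{3}}+O(\sigma^{-4})\Bigr)\int_{\Sigma_t}(F-f)^2\,d\mu_t,
\]
with the coefficient coming from the $m$-dependent next-order terms in $\mathcal L$, $2F^2$, and $F^{kl}\bar R_{3k3l}$. So convergence is driven by the \emph{positive mass}, not by the spherical spectral gap, and you must expand each of these quantities to order $\sigma^{-3}$ and track the signs; invoking only an ``order-$\sigma^{-2}$ spectral gap'' gives nothing. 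This is also why the paper needs the refined bound $|F-f|\le c\sigma^{-3}$ (not just $\sigma^{-2}$) to absorb the cubic term $\int H(f-F)^3$.

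A related issue is your barrier step. Coordinate spheres $S_{\sigma\pm C_1}$ are not literal sub/supersolutions of \eqref{flow}; the nonlocal term $f$ prevents a direct comparison. The paper's position estimate (Proposition~\ref{3.5}) instead shows that at a point where $r$ attains a too-large maximum one has $F-f\ge \bigl(\tfrac{m}{2}|\vec a|-c\bigr)\sigma^{-3}>0$, again via the mass term $3m\langle\vec a,\nu_e\rangle/(2r_0^3)$ in the expansion of $F$. So both the spatial confinement and the exponential decay hinge on the $m>0$ contribution at order $\sigma^{-3}$, which your outline does not isolate.
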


Once  establishing the existence of the constant harmonic mean curvature surfaces, we can further prove that they form a foliation of the asymptotically Schwarzschild manifold outside a large ball. 
For $\sigma\geq1$ and $B_1, B_2, B_3$ nonnegative numbers, we now define a set
 $\mathcal{B}_\sigma(B_1,B_2,B_3)$ of nearly round surfaces in $(N,\bar{g})$ by setting 
 \[
 \mathcal{B}_\sigma:=\set{M\subset N \big{|} \sigma-B_1\leq r\leq\sigma+B_1, |\mathring{A}|\leq B_2\sigma^{-3}, |\n\mathring{A}|\leq B_3\sigma^{-4}},
 \]
where $\mathring{A}$ is the traceless part of the second fundamental form.

\begin{thm}\label{main2}
    There is $\sigma_0$ depending only on $C_0$ and $c=c(n)$ such that for all $\sigma\geq\sigma_0$, the constant harmonic mean curvature surfaces $\Sigma_{\sigma}$ constructed in Theorem \ref{main} form a proper foliation of $N\backslash B_{\sigma_0}(0)$. 
\end{thm}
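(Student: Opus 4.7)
Theorem \ref{main} produces, for each $\sigma\geq\sigma_0$, a constant harmonic mean curvature surface $\Sigma_\sigma$, and the a priori bounds inherited from the flow place $\Sigma_\sigma\in\mathcal{B}_\sigma(B_1,B_2,B_3)$ for constants $B_1,B_2,B_3$ depending only on $C_0$. My plan is to show that the family $\{\Sigma_\sigma\}_{\sigma\geq\sigma_0}$ depends smoothly on $\sigma$ and has strictly positive normal velocity $\partial_\sigma$, so that it sweeps out $N\setminus B_{\sigma_0}(0)$ without overlap. After rescaling, $\sigma^{-1}\Sigma_\sigma$ converges in $C^{k,\alpha}$ to the unit round sphere, so each $\Sigma_\sigma$ is a small normal graph over the coordinate sphere $S_\sigma(0)$, and elliptic regularity for the quasilinear equation $F=f_\sigma$ yields smooth dependence of the graphing function on $\sigma$.

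\textbf{Jacobi equation and lapse.} Differentiating $F(\phi^\sigma)=f_\sigma$ in $\sigma$ and setting $u_\sigma=\langle\partial_\sigma\phi^\sigma,\nu_\sigma\rangle$, I obtain the linearized equation
\begin{equation*}
L_\sigma u_\sigma=\frac{df_\sigma}{d\sigma},
\end{equation*}
where $L_\sigma$ is the linearization of the harmonic mean curvature operator at $\Sigma_\sigma$. On a Euclidean round sphere of radius $\sigma$ one has $F=1/(2\sigma)$, hence the action of $L_\sigma$ on constants is $-1/(2\sigma^2)+O(\sigma^{-3})$; from the Schwarzschild asymptotics \eqref{metr} one also obtains $df_\sigma/d\sigma=-1/(2\sigma^2)+O(\sigma^{-3})$. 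Invariance of $F$ under ambient translations makes the normal modes $\langle V,\nu_\sigma\rangle$ span a three-dimensional approximate kernel of $L_\sigma$ which is $L^2$-orthogonal to constants. Using the $\mathcal{B}_\sigma$-bounds on $|\mathring A|$ and $|\nabla\mathring A|$, I can invert $L_\sigma$ on the orthogonal complement of this kernel and conclude $u_\sigma=1+O(\sigma^{-1})$, which is strictly positive on $\Sigma_\sigma$ for every $\sigma\geq\sigma_0$.

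\textbf{Foliation.} Consider the smooth map $\Phi:[\sigma_0,\infty)\times S^2\to N\setminus B_{\sigma_0}(0)$ defined by $\Phi(\sigma,p)=\phi^\sigma(p)$. It is proper because $\Sigma_\sigma\subset\{\sigma-B_1\leq r\leq\sigma+B_1\}$, so $\Phi(\sigma,p)\to\infty$ as $\sigma\to\infty$. The positivity of $u_\sigma$ ensures that $\Phi$ is a local diffeomorphism, and comparison with the radial coordinate-sphere foliation (to which $\Sigma_\sigma$ is uniformly $C^2$-close) provides global injectivity: for $\sigma_1<\sigma_2$ the surfaces $\Sigma_{\sigma_1}$ and $\Sigma_{\sigma_2}$ are strictly normally ordered along the path $\sigma\in[\sigma_1,\sigma_2]$ since the lapse does not change sign. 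Hence $\Phi$ is a diffeomorphism onto $N\setminus B_{\sigma_0}(0)$, establishing the proper foliation.

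The main obstacle is the lapse analysis: the operator $L_\sigma$ for the harmonic mean curvature is more intricate than the classical Jacobi operator for CMC surfaces, and establishing its uniform invertibility modulo the translational kernel in norms depending only on $C_0$, together with upgrading the sign of the constant-mode approximation to pointwise positivity of $u_\sigma$, requires careful use of the $\mathcal{B}_\sigma$-estimates combined with the refined asymptotic expansion of $\bar g$ from \eqref{metr}.
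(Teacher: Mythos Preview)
Your overall strategy---reduce the foliation property to positivity of the lapse via analysis of the linearized operator---matches the paper's. However, you miss the central technical obstacle that the paper is built around: the linearization $L_\sigma = -(\mathcal{L} + 2F^2 - F^{ij}\bar R_{3i3j})$ of the harmonic mean curvature is \emph{not self-adjoint}, because $\mathcal{L}u = F^{kl}u_{kl}$ is not in divergence form. Your argument speaks of the ``orthogonal complement'' of the translational kernel and implicitly treats the spectral decomposition as if $L_\sigma$ were symmetric; this is exactly where a CMC-style argument breaks down here, and your final paragraph acknowledging that ``$L_\sigma$ is more intricate'' does not supply the missing mechanism.

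The paper's remedy (Lemmas~\ref{strc} and~\ref{inv}) is to pass to the symmetrization $S = \tfrac12(L+L^*)$, which differs from $L$ only by the lower-order terms $F^{ij}_{,i}\partial_j + \tfrac12 F^{ij}_{,ij}$ of size $O(\sigma^{-3})$. For $S$ one has genuine eigenfunctions; the lowest eigenfunction $h_0$ is shown to be $C^0$-close to its own mean $\bar h_0$, and the next eigenvalue satisfies $\eta_1 \geq cm\sigma^{-3}$. The proof of Theorem~\ref{local} then decomposes the graphing function as $u = h_0 + u_0$ with $u_0\perp h_0$, applies Schauder estimates to $u_0$, and obtains $|u - \bar h_0|\leq c\sigma^{-2}|\bar h_0|$, so $u$ inherits the sign of the constant $\bar h_0$. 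Your constants/translations/higher-modes decomposition could in principle be made to work, but only after first controlling the non-symmetric part of $L_\sigma$ in this way.

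A second, smaller gap: you differentiate $F(\phi^\sigma)=f_\sigma$ in $\sigma$ before establishing that $\sigma\mapsto\phi^\sigma$ is $C^1$; ``elliptic regularity for the quasilinear equation $F=f_\sigma$'' does not by itself give this, since the limit surfaces arise from a flow, not from an implicit function theorem. The paper sidesteps this by comparing two discrete leaves $\Sigma_{\sigma_1}$ and $\Sigma_{\sigma_2}$: writing $\phi^{\sigma_2}=\phi^{\sigma_1}+u\nu$ and using Taylor's theorem yields $Su = f_{\sigma_2}-f_{\sigma_1}+E$ with $|E|\leq c\sigma^{-3}\|u\|_{C^2}$, so no a~priori smoothness in $\sigma$ is needed to obtain the sign of $u$, after which Lemma~\ref{fol} applies.
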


As a consequence, we are able to define the center of mass $C_{HM}$.

\begin{defn}
    Let $\Sigma_{\sigma}$ be the family of surfaces constructed in Theorem \ref{main2} and $\phi^{\sigma}$ be the position vector.
     The center of gravity of $\Sigma_{\sigma}$ is defined by 
    \[ C_{HM}=\lim_{\sigma\rightarrow\infty}\frac{\int_{\Sigma_{\sigma}}\phi^{\sigma} d\mu_e}{\int_{\Sigma_{\sigma}}d\mu_e}, \]
    where $\mu_e$ is the volume element with respect to the Euclidean metric.
\end{defn}

Our last result in this paper shows that our definition of the center of mass agrees with the center of mass defined by the ADM formulation of the initial value problem for the Einstein equation, namely, the ADM center of mass.

\begin{thm}\label{thm-center}
    If $(N, \bar{g})$ is the asymptotically Schwarzschild manifold with metric \eqref{metr}, then $C_{HM}$ is exactly the ADM center of mass.
\end{thm}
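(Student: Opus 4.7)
My plan is to follow the strategy pioneered by Huisken--Yau \cite{Huisken1996DefinitionOC} and refined by Corvino--Wu \cite{Corvino} and Huang \cite{Huang09} for the CMC foliation, adapting it to the harmonic mean curvature setting. The first step is to parameterize each $\Sigma_\sigma$ as a radial graph over the coordinate sphere $S_\sigma(0)$. Since Theorem \ref{main2} gives $\Sigma_\sigma\in\mathcal{B}_\sigma$, we may write
\[
\Sigma_\sigma=\{(\sigma+u_\sigma(\omega))\omega:\omega\in S^2\}
\]
for a function $u_\sigma$ on $S^2$ whose $C^1$ norm decays as $\sigma\to\infty$, and the condition $F[\Sigma_\sigma]\equiv f_\sigma$ becomes a quasilinear elliptic equation for $u_\sigma$.

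Next I would expand this equation in powers of $1/\sigma$ using the metric \eqref{metr}. At a round Euclidean sphere one has $F=H/4$, and a direct computation shows that the linearization of $F$ at the unit sphere equals $-\frac{1}{4}(\Delta_{S^2}+2)$. Hence the equation for $u_\sigma$ takes the schematic form
\[
-\frac{1}{4\sigma^2}(\Delta_{S^2}+2)u_\sigma=Q_\sigma(m,P_{\alpha\beta})+E_\sigma,
\]
with $Q_\sigma$ encoding the leading contributions from the Schwarzschild background and from $P_{\alpha\beta}$, and $E_\sigma$ a lower-order remainder. The kernel of $\Delta_{S^2}+2$ is spanned by the coordinate functions $\omega^k$, which correspond exactly to translations of the center, so the first-harmonic component $u_\sigma^{(1)}=\sum_k c_\sigma^k\omega^k$ is what ultimately determines the geometric center of mass.

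The third step is the comparison with the ADM formula. Expanding
\[
\int_{\Sigma_\sigma}\phi^\sigma\,d\mu_e=\int_{S^2}(\sigma+u_\sigma)\omega\,J_\sigma\,d\omega
\]
with $J_\sigma$ the Euclidean area element of the graph, and dividing by $\int_{\Sigma_\sigma}d\mu_e$, I extract a specific linear functional of the first Fourier modes of $P_{\alpha\beta}$ as $\sigma\to\infty$. Independently, the ADM center of mass is
\[
C_{ADM}^k=\frac{1}{16\pi m}\lim_{r\to\infty}\left[\int_{S_r}x^k(\partial_i\bar{g}_{ij}-\partial_j\bar{g}_{ii})\nu^j\,d\mu_e-\int_{S_r}(\bar{g}_{ik}\nu^i-\bar{g}_{ii}\nu^k)\,d\mu_e\right].
\]
Applying the divergence theorem on the annulus between $S_r(0)$ and $\Sigma_\sigma$ and using the decay $|\partial^l P_{\alpha\beta}|\leq Cr^{-l-2}$ to discard terms that vanish in the limit yields $C_{HM}=C_{ADM}$.

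The hard part will be the precise expansion of $u_\sigma$ in the second step, carried to the order needed for the ADM identification. Because $F=(H^2-|A|^2)/(2H)$ is a quotient involving both $H$ and $|A|^2$, the linearization and the subleading terms carry more bookkeeping than in the mean curvature case, and one must verify that the spherical-harmonic decomposition of $u_\sigma$ stabilizes appropriately in $\sigma$. Once these asymptotics are in hand, the final comparison proceeds by an argument analogous to Huang's \cite{Huang09}.
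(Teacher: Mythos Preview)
Your proposal is a reasonable outline, but it follows a genuinely different route from the paper's own proof.

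The paper does \emph{not} work with the radial graph function $u_\sigma$ or its spherical-harmonic decomposition at all. Instead it uses a Corvino--Wu-style argument built directly on the flow of Theorem~\ref{main}. First it observes that a translation of the asymptotic chart shifts both $C_{HM}$ and $C_{ADM}$ by the same vector, so one may assume the ADM center vanishes; in that chart the metric has the stronger decay $g_{ij}=(1+\tfrac{m}{2|x|})^4\delta_{ij}+O_5(|x|^{-3})$. Then it shows that the volume-preserving harmonic mean curvature flow, started from the coordinate sphere $S_\sigma(0)$, moves each point by at most $C\sigma^{-1/2}$: the improved decay gives $|F-f|=O(\sigma^{-4})$ at $t=0$, the exponential $L^2$ estimate \eqref{expo} then yields $\int_{\Sigma_t}(F-f)^2\leq C\sigma^{-6}e^{-2mt/\sigma^3}$, and interpolation with $|\nabla^2 F|\leq c\sigma^{-5}$ upgrades this to the pointwise bound $|F-f|\leq C\sigma^{-7/2}e^{-mt/(2\sigma^3)}$. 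Integrating the flow equation in $t$ gives $|\phi^\sigma_\infty-\phi^\sigma_0|\leq C\sigma^{-1/2}$, and since $\phi^\sigma_0$ parametrizes an origin-centered sphere, $C_{HM}=0$ follows immediately.

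Your approach---linearize the constant harmonic mean curvature equation on the round sphere, isolate the $\ell=1$ modes of $u_\sigma$, and compare with the ADM integrand via a divergence-theorem identity---is the Huang strategy, and it is in principle valid here since your identification of the linearization $-\tfrac14(\Delta_{S^2}+2)$ is correct and the kernel is the same translation space. What it buys is independence from the flow construction: it would apply to any constant harmonic mean curvature foliation satisfying the round estimates. What it costs is that the ``hard part'' you flag---carrying the expansion of $u_\sigma$ to the order where the $\ell=1$ coefficients stabilize and then matching them to the ADM formula---is substantial and not covered by anything already proved in the paper; in particular you would need the analogue of Huang's key identity relating the projection of the PDE onto first harmonics to the ADM integrand, now for $F$ rather than $H$. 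The paper's route avoids all of this by leveraging the flow estimates it has already established.
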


The strategies of the proof for the main theorems follow the approach of Huisken-Yau (\cite{Huisken1996DefinitionOC}) regarding the volume preserving mean curvature flow. More specifically, we initiate the volume preserving harmonic mean curvature flow \eqref{flow} from a large coordinate sphere and provide the necessary a priori estimates of the second fundamental form together with the derivatives on the circular domain $\mathcal{B}_{\sigma}(B_1, B_2, B_3)$. However, there are several crucial innovations in our paper:

Firstly, in contrast to the volume preserving mean curvature flow, which is a quasilinear parabolic system, the volume preserving harmonic mean curvature flow is a fully nonlinear system. Consequently, this complicates the computations substantially.
Secondly, the principal part of the linearized operator of the harmonic mean curvature ${\mathcal L}$ is not in divergence form. Nevertheless, it is not avoidable to give a priori estimate for the non-divergence elliptic operator ${\mathcal L}$ when proving the exponential convergence and the formation of foliation. In this context, we treat ${\mathcal L}$ as a perturbation of the Laplacian operator and furnish the estimate for the error term. Thirdly, in order to prove that $\Sigma_{\sigma}$ forms a foliation, we need to derive that the lapse function has a sign. Since the operator ${\mathcal L}$ is not self-adjoint, many useful tools in functional analysis and PDE cannot be applied. In order to obtain the desired estimate, we consider the sum of ${\mathcal L}$ and its adjoint operator ${\mathcal L}^{*}$, and establish the eigenvalue estimates for such a self-adjoint operator. Then we apply the elliptic estimate for such an operator to obtain the desired information.

Establishing the global uniqueness of the constant harmonic mean curvature foliation, as presented in Section 5 of \cite{Huisken1996DefinitionOC} or \cite{QingTian}, poses significant challenges. We will demonstrate uniqueness in our forthcoming paper, focusing on asymptotically flat manifolds with mild decay metrics or those satisfying the RT condition, as discussed in \cite{Huang}. Furthermore, we believe the harmonic mean curvature foliation is also applicable to asymptotically hyperbolic manifolds, as outlined in \cite{nevetian1}\cite{nevetian2}.

The subsequent sections are organized as follows: Section 2 provides some preliminary materials and evolution equations of geometric quantities for the volume preserving harmonic mean curvature flow; in Section 3, we give the proof of the long time existence and exponential convergence of the flow; the construction of the foliation and definition of the center of mass is treated in Section 4. 

\begin{acknowledge}
    The authors appreciate the references provided by Prof. Ma Shiguang, Prof. Zhang Xiao, Prof. Wang Zuoqin and Prof. Li Jiayu. The authors would like to express their thanks to Prof. Tian Gang for supporting this research and the hospitality of Peking University.
\end{acknowledge}

\vspace{.2in}

\section{Preliminaries}
In this section, we present some fundamental concepts and also establish some useful lemmas.
Let $\Sigma_t$ be a family of hypersurfaces in $(N, \bar{g})$ solving \eqref{flow}. Let $\nu$ be the outward unit normal to $\Sigma_t$ and $g$ be the induced metric on $\Sigma_t$. Denote the covariant derivative on $\Sigma_t$ and $N$ by $\nabla$ and $\bar{\nabla}$, respectively. Then for a fixed time $t$, we choose an orthonormal frame $e_1, e_2, \nu$ adapted to $\Sigma_t$ such that the second fundamental form $A=\{h_{ij}\}$ is given by 
\[ h_{ij}=\langle\bar{\nabla}_{e_i}\nu, e_j\rangle. \]
Let $\lambda_1, \lambda_2$ be the two principal curvatures on $\Sigma_t$ and write
\[ H=g^{ij}h_{ij}=\lambda_1+\lambda_2, \]
\[ |A|^2=h^{ij}h_{ij}=\lambda_1^2+\lambda_2^2, \]
\[ F=\frac{H^2-|A|^2}{2H}=\frac{\lambda_1\lambda_2}{\lambda_1+\lambda_2} \]
for the mean curvature, the square norm of the second fundamental form, and the harmonic mean curvature on $\Sigma_t$ respectively. 
Furthermore, let $\mathring{A}$ be the traceless second fundamental form such that
\[ \mathring{h}_{ij}=h_{ij}-\frac{1}{2}Hg_{ij}, \]
\[ |\mathring{A}|^2=|A|^2-\frac{1}{2}H^2=\frac{1}{2}(\lambda_1-\lambda_2)^2. \]

The curvature of $N$ together with the derivatives and the second fundamental form of $\Sigma_t$ are related by
the equations of Gauss and Codazzi:
\[
\begin{aligned}
R_{ijkl}&=\bar{R}_{ijkl}+h_{il}h_{jk}-h_{ik}h_{jl},\\
-\bar{R}_{3ijk}&=\nabla_{k}h_{ij}-\nabla_{j}h_{ik},
\end{aligned}
\]
where the index 3 indicates the $\nu$ direction. 
The derivatives of the curvature are related by 
\[
\bar{\n}_l\bar{R}_{3ijk}=\n_l\bar{R}_{3ijk}+h_{jl}\bar{R}_{3i3k}+h_{kl}\bar{R}_{3ij3}-h_{ml}\bar{R}_{mijk},
\]
and the commutation of derivatives are
\[
h_{ij,kl}=h_{ij,lk}+h_{im}R_{mjlk}+h_{jm}R_{milk}.
\]

We should state another consequence of the Codazzi equation, which will be utilized in dealing with the curvature estimate, especially to handle the extra terms coming from the derivative of $F$.
\begin{lem}\label{lem1.4}
If $w_{i}=\bar{R}_{3lil}$ denotes the projection of $\bar{R}ic
(\nu,\cdot)$ onto $\Sigma_t$, we have for any $\eta>0$ the inequality
\[
|\nabla A|^{2}\geq(\frac{3}{4}-\eta)|\nabla H|^{2}-(\frac{1}{4}\eta^{-1}-1)|w|^{2}.
\]
Furthermore, we have the higher order version, for $k=1, 2, 3,\cdots$,
\[
|\n^kA|^2\geq(\f34-\eta)|\n^kH|^2-(\frac{1}{4}\eta^{-1}-1)|\n^{k-1}w|^2.
\]
\end{lem}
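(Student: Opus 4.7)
The plan is to split $\nabla A$ into its totally symmetric part and its Codazzi defect, control each piece separately using the Codazzi equation, and then recombine via Young's inequality. Set $T_{ijk} = \nabla_i h_{jk}$ and write $T = \hat S + D$, where $\hat S_{ijk} = \frac{1}{3}(T_{ijk} + T_{jik} + T_{kij})$ is the full symmetrization (which, thanks to the symmetry $T_{ijk} = T_{ikj}$, really is totally symmetric). By total symmetry of $\hat S$ the decomposition is orthogonal, so $|\nabla A|^2 = |\hat S|^2 + |D|^2$, and the two summands can be bounded separately.

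For the symmetric part, the standard trace-part / trace-free decomposition of $\mathrm{Sym}^3 T_p \Sigma_t$ in dimension two gives $|\hat S|^2 \geq \tfrac{3}{4}|\operatorname{tr}\hat S|^2$. Tracing $\hat S$ and invoking the contracted Codazzi identity $\operatorname{div}(A)_k = \nabla_k H - w_k$ (itself a direct consequence of Codazzi and the identity $\sum_i \bar R_{3iik} = -w_k$) yields $\operatorname{tr}_{jk}\hat S_i = \nabla_i H - \tfrac{2}{3}w_i$. For the defect $D$, a direct manipulation of Codazzi expresses $D_{abc}$ as an explicit linear combination of components of $\bar R_{3\cdot\cdot\cdot}$; in dimension two the antisymmetries of Riemann force every nonzero component of $\bar R_{3ijk}$ to equal $\pm w_1$ or $\pm w_2$, and a short component calculation then gives $|D|^2 = \tfrac{2}{3}|w|^2$.

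Combining the two bounds yields $|\nabla A|^2 \geq \tfrac{3}{4}|\nabla H - \tfrac{2}{3}w|^2 + \tfrac{2}{3}|w|^2$. Applying the Young-type inequality $|X - Y|^2 \geq (1-\epsilon)|X|^2 + (1 - \tfrac{1}{\epsilon})|Y|^2$ with $\epsilon = \tfrac{4\eta}{3}$ and collecting terms gives exactly $|\nabla A|^2 \geq (\tfrac{3}{4}-\eta)|\nabla H|^2 - (\tfrac{1}{4\eta}-1)|w|^2$, which is the desired inequality. The higher order version follows from the same recipe applied to $\nabla^{k-1}(\nabla A)$: differentiating Codazzi $k-1$ times and commuting covariant derivatives produces additional contributions polynomial in $A$, $\bar R$ and their derivatives, of the same order as $\nabla^{k-1}w$, which can be absorbed into the $|\nabla^{k-1}w|^2$ term. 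The main obstacle is the exact identity $|D|^2 = \tfrac{2}{3}|w|^2$: it is what makes the pairing of constants $(\tfrac{3}{4}-\eta)$ and $(\tfrac{1}{4\eta}-1)$ come out of the Young step with equality at the saturation value of $\epsilon$, and it is the step where the two-dimensional geometry of $\Sigma_t$ enters in an essential way.
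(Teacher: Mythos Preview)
Your argument for $k=1$ is correct and is essentially Huisken's original proof (which the paper simply cites): both routes arrive at the intermediate bound $|\nabla A|^2\geq\tfrac34|\nabla H|^2-\langle\nabla H,w\rangle+|w|^2$ and then apply Young. The packaging differs only cosmetically---you symmetrize $\nabla A$ first and then take traces, whereas the paper (for $k=2$, and Huisken for $k=1$) projects $\nabla^kA$ directly onto an explicit tensor $E$ built from $\nabla^kH$ and $\nabla^{k-1}w$.

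For higher $k$ your sketch misidentifies the mechanism: there are no derivative commutations to perform and nothing to ``absorb'', and if there were, you could not absorb them while keeping the sharp constant $\tfrac{1}{4\eta}-1$. The correct observation is that in dimension two the Codazzi defect satisfies the \emph{tensor} identity
\[
D_{ijk}=-\tfrac13\bigl(g_{ik}w_j+g_{ij}w_k-2g_{jk}w_i\bigr),
\]
so that applying $\nabla^{k-1}$ to the orthogonal splitting $\nabla A=\hat S+D$ (which stays orthogonal, being a decomposition of the last three tensor slots) gives $|\nabla^{k-1}D|^2=\tfrac23|\nabla^{k-1}w|^2$ and $\operatorname{tr}_{jk}\nabla^{k-1}\hat S=\nabla^kH-\tfrac23\nabla^{k-1}w$ \emph{exactly}; the $k=1$ computation then repeats verbatim with the stated constants. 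The paper instead writes down, for $k=2$, an explicit tensor $E_{ijkl}$ linear in $\nabla^2H$ and $\nabla w$, checks $\langle E,\nabla^2A\rangle=|E|^2$ by hand so that $|\nabla^2A|^2\geq|E|^2=\tfrac34|\nabla^2H|^2-\langle\nabla^2H,\nabla w\rangle+|\nabla w|^2$, and invokes induction for larger $k$. Your decomposition is a cleaner way to see why that induction goes through.
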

\begin{proof}
    The first result comes from \cite{Huisken1986ContractingCH}. To show the second statement, we first consider $k=2$ and then utilize induction argument. Similar to \cite{Huisken1986ContractingCH}, we define 
    \begin{gather*}
E_{ijkl}=\f14(H_{il}g_{jk}+H_{jl}g_{ik}+H_{kl}g_{ij})+\f12(w_{kl}g_{ij}-w_{il}g_{jk}-w_{jl}g_{ik}),\\
F_{ijkl}=h_{ij,kl}-E_{ijkl}.
     \end{gather*}
    We find that 
    \begin{gather*}
    \abs{E}^2=\f34H_{kl}^2-w_{kl}H_{kl}+w_{kl}^2,\\
   \<E_{ijkl},h_{ij,kl}\>=\abs{E}^2.
      \end{gather*}
      Thus, we conclude that $\<E_{ijkl},F_{ijkl}\>=0$. It follows that 
      \begin{align*}
      |\n^2A|^2 & =|\n^2\mathring{A}|^2+\f12\abs{\n^2H}^2\\
      & = \abs{E}^2+\abs{F}^2\geq\abs{E}^2\\
      & =\f34H_{kl}^2-w_{kl}H_{kl}+w_{kl}^2,
      \end{align*}
      which implies that 
      \[
      |\n^2A|^2\geq(\frac{3}{4}-\eta)|\n^2H|^2-(\frac{1}{4}\eta^{-1}-1)|\n\omega|^2.
      \]
\end{proof}

Next we state the Simons' identity of the harmonic mean curvature $F$ computed similarly as in \cite{Schoen1975CurvatureEF} and \cite{geometry}, and an important consequence of the Codazzi equations \cite{Huisken1987TheVP} by considering $F$ as a function of $h^i_j=g^{ik}h_{jk}$.
\begin{lem}\label{lem1.2}
The harmonic mean curvature $F$ satisfies the identity
\begin{align*}
F^{kl}h_{ij;kl}  = &\n_i\n_jF+F^{kl}h_{im}h_{mj}h_{kl}-F^{kl}h_{mk}h_{ml}h_{ij}-F^{kl}h_{kl}\bar{R}_{3i3j}+h_{ij}F^{kl}\bar{R}_{3k3l}\\
&+F^{kl}(h_{jm}\bar{R}_{mkli}+h_{mi}\bar{R}_{mklj}-2h_{mk}\bar{R}_{mjil})+F^{kl}(\bar{\n}_l\bar{R}_{3jki}+\bar{\n}_i\bar{R}_{3klj})\\
&-F^{kl,pq}\n_ih_{pq}\n_jh_{kl}
\end{align*}
Here $F^{kl}(A)B_{kl}=\pd s|_{s=0}F(A+sB)$.
\end{lem}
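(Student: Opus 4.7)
The plan is to derive this Simons-type identity by the same scheme used in the classical mean curvature case, with the coefficients $g^{kl}$ replaced by the linearization coefficients $F^{kl}$ and an extra $F^{kl,pq}$ nonlinearity that absorbs the second-order chain rule term. The three classical ingredients are: (i) the Codazzi equation, used to swap indices on covariant derivatives of $h$; (ii) the Ricci identity, used to commute second covariant derivatives acting on the $(0,2)$-tensor $h$; and (iii) the Gauss equation, used to convert the intrinsic curvature $R$ produced in (ii) into the ambient curvature $\bar R$ plus cubic terms in $h$.

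Concretely, I would first apply Codazzi twice to rewrite $F^{kl}\nabla_k\nabla_l h_{ij}$ in the form $F^{kl}\nabla_i\nabla_j h_{kl}$ up to derivatives of $\bar R$ and commutator corrections. Each application of $\nabla_k h_{ij}-\nabla_j h_{ik} = -\bar R_{3ijk}$ swaps one pair of indices and drops a $\bar R_{3\bullet\bullet\bullet}$ term. Differentiating these ambient-curvature terms and invoking the identity
$$\bar\nabla_l\bar R_{3ijk} = \nabla_l\bar R_{3ijk} + h_{jl}\bar R_{3i3k} + h_{kl}\bar R_{3ij3} - h_{ml}\bar R_{mijk}$$
already recorded in the preliminaries reproduces, after contracting with $F^{kl}$, the ambient terms $F^{kl}(\bar\nabla_l\bar R_{3jki}+\bar\nabla_i\bar R_{3klj})$ together with the contributions $-F^{kl}h_{kl}\bar R_{3i3j}+h_{ij}F^{kl}\bar R_{3k3l}$. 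The Ricci identity applied to $[\nabla_k,\nabla_i]h_{lj}$ contributes $h_{lm}R_{mjki}+h_{jm}R_{mlki}$; substituting the Gauss equation $R_{abcd}=\bar R_{abcd}+h_{ad}h_{bc}-h_{ac}h_{bd}$ and contracting with $F^{kl}$ then yields the cubic extrinsic terms $F^{kl}h_{im}h_{mj}h_{kl}-F^{kl}h_{mk}h_{ml}h_{ij}$ along with the mixed terms $F^{kl}(h_{jm}\bar R_{mkli}+h_{mi}\bar R_{mklj}-2h_{mk}\bar R_{mjil})$ after using the symmetries of $\bar R$.

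The final step is to match $F^{kl}\nabla_i\nabla_j h_{kl}$ against the Hessian of $F$ via the chain rule
$$\nabla_i\nabla_j F = F^{kl}\nabla_i\nabla_j h_{kl} + F^{kl,pq}\nabla_i h_{pq}\nabla_j h_{kl},$$
which moves the $-F^{kl,pq}\nabla_i h_{pq}\nabla_j h_{kl}$ contribution to the right-hand side and closes the identity. The main obstacle is purely bookkeeping: no new analytical input is required beyond Codazzi, Ricci, Gauss, and the chain rule, but the sign, symmetry, and index conventions must be tracked carefully across the dozen or so terms. Since $F$ is a smooth symmetric function of the shape operator, the coefficients $F^{kl}$ and $F^{kl,pq}$ automatically have the right symmetries for the relevant contractions to collapse to the stated form.
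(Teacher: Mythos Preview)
Your proposal is correct and follows essentially the same route as the paper: start from the chain rule $\nabla_i\nabla_jF=F^{kl}h_{kl;ji}+F^{kl,pq}\nabla_ih_{pq}\nabla_jh_{kl}$, then apply Codazzi twice, commute derivatives via the Ricci identity, substitute the Gauss equation, and convert $\nabla\bar R$ to $\bar\nabla\bar R$ using the relation from the preliminaries. The paper's computation runs in the reverse direction (from $\nabla_i\nabla_jF$ toward $F^{kl}h_{ij;kl}$) and notes that an extra antisymmetric cubic term $F^{kl}(h_{jm}h_{ml}h_{ki}-h_{km}h_{mi}h_{jl})$ vanishes because $F^{kl}$ is diagonal in the principal directions, which is exactly the ``symmetries collapse'' you allude to at the end.
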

\begin{proof}
    Direct computation gives 
    \begin{align*}
       & \n_i\n_jF  \\
       = &F^{kl}h_{kl;ji}+F^{kl,pq}\n_ih_{pq}\n_jh_{kl}\\
      =& F^{kl}(h_{kj;li}-\n_i\bar{R}_{3klj})+F^{kl,pq}\n_ih_{pq}\n_jh_{kl}\\
         =& F^{kl}(h_{kj;il}-\n_i\bar{R}_{3klj}+h_{km}R_{mjil}+h_{jm}R_{mkil})+F^{kl,pq}\n_ih_{pq}\n_jh_{kl}\\
         =& F^{kl}(h_{ji;kl}-\n_l\bar{R}_{3jki}-\n_i\bar{R}_{3klj}+h_{km}R_{mjil}+h_{jm}R_{mkil})+F^{kl,pq}\n_ih_{pq}\n_jh_{kl}\\
        = & F^{kl}(h_{ij;kl}-\bar{\n}_l\bar{R}_{3jki}-\bar{\n}_i\bar{R}_{3klj}+h_{kl}\bar{R}_{3j3i}+h_{ij}\bar{R}_{3kl3})\\
                & + F^{kl}(2h_{mk}\bar{R}_{mjil}-h_{jm}\bar{R}_{mkli}-h_{mi}\bar{R}_{mklj})+\underline{F^{kl}(h_{jm}h_{ml}h_{ki}-h_{km}h_{mi}h_{jl})}\\
        & + F^{kl,pq}\n_ih_{pq}\n_jh_{kl}+F^{kl}h_{mk}h_{ml}h_{ij}-F^{kl}h_{mi}h_{mj}h_{kl}.
    \end{align*}
    The underline part is disappeared since $F^{kl}$ is diagonal.
\end{proof}

Now we calculate the evolution equations of the flow \eqref{flow}.

\begin{lem}\label{evo}
     We have the evolution equations along the flow \eqref{flow}:
     \begin{enumerate}
         \item
$
\frac{\partial}{\partial t}g_{ij}=2(f-F)h_{ij},$\\
\item 
$
\frac{\partial}{\partial t}\nu=\nabla F,$\\
\item
$
\frac{\partial}{\partial t}d\mu=H(f-F)d\mu,$\\
\item
$\frac {\partial}{\partial t}h_{ij}=\nabla_{i}\nabla_jF+(f-F)h_{il}h_{lj}+(f-F)\bar{R}_{3i3j},$\\
\item
$\frac{\partial H}{\partial t} =\Delta F-(f-F)\left(\abs{A}^2+\bar{R}ic(\nu,\nu)\right),$
\end{enumerate}
where $d\mu$ is the induced measure on $\Sigma_t$.
\end{lem}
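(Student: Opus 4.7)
The plan is to derive all five identities from the general first-variation formulas for a normal deformation $\partial_t \phi = \psi \nu$ with $\psi = f - F$, using throughout that $f = f(t)$ depends only on time and so $\nabla f = 0$ on $\Sigma_t$. The Weingarten relation $\bar\nabla_i \nu = h_i^{\,k} e_k$ and the Gauss formula $\bar\nabla_i \partial_j \phi = \Gamma^k_{ij}\partial_k\phi - h_{ij}\nu$ are the workhorses.

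For (1), I would differentiate $g_{ij} = \bar g(\partial_i\phi, \partial_j\phi)$, interchange $\partial_t$ with $\partial_i$, and use $\partial_t(\partial_i \phi) = \bar\nabla_i(\psi \nu) = \psi\, \bar\nabla_i\nu + (\nabla_i\psi)\nu$; since $\nu \perp \partial_j\phi$ and $\bar g(\bar\nabla_i\nu, \partial_j\phi) = h_{ij}$, this collapses to $2\psi h_{ij}$. For (2), differentiating $\bar g(\nu,\nu) = 1$ shows $\partial_t \nu$ is tangential, and differentiating $\bar g(\nu, \partial_i\phi) = 0$ and substituting the expression for $\partial_t(\partial_i\phi)$ gives $\bar g(\partial_t\nu, e_i) = -\nabla_i\psi = \nabla_i F$, i.e., $\partial_t \nu = \nabla F$. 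For (3), writing $d\mu = \sqrt{\det g}\,dx$ and using (1) yields $\partial_t d\mu = \tfrac{1}{2} g^{ij}\partial_t g_{ij}\, d\mu = \psi H \, d\mu$.

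For (4), the substantive step, I would start from $h_{ij} = -\bar g(\bar\nabla_i \partial_j\phi, \nu)$, differentiate in $t$, and interchange $\partial_t$ with $\bar\nabla_i\bar\nabla_j$ at the cost of a Riemann curvature term coming from the ambient Ricci identity $[\bar\nabla_t, \bar\nabla_i]\bar\nabla_j\phi = \bar R(\partial_t\phi, \partial_i\phi)\partial_j\phi$. Substituting $\partial_t \phi = \psi\nu$ produces $\psi \bar R_{3i3j}$ after projecting onto $\nu$. Expanding $\bar\nabla_i\bar\nabla_j(\psi\nu)$ and keeping only its normal component, together with the Weingarten identity $\bar g(\bar\nabla_i\nu, \bar\nabla_j\nu) = h_{ik}h^k{}_j$ evaluated through the Gauss decomposition of $\bar\nabla_j\nu$, produces the quadratic term $\psi h_{il}h^l{}_j$; the Hessian of $\psi$ gives $\nabla_i\nabla_j \psi = -\nabla_i\nabla_j F$. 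Combining these three contributions with the correct sign conventions yields the stated formula.

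For (5), I would contract (4) with $g^{ij}$ while accounting for $\partial_t g^{ij} = -2(f-F)h^{ij}$ from (1): the $g^{ij}\nabla_i\nabla_j F$ term becomes $\Delta F$, the quadratic contribution $(f-F)g^{ij}h_{il}h^l{}_j = (f-F)|A|^2$ combines with the $-2(f-F)|A|^2$ from $\partial_t g^{ij}\cdot h_{ij}$ to give $-(f-F)|A|^2$, and $g^{ij}\bar R_{3i3j} = \bar R\mathrm{ic}(\nu,\nu)$. The main obstacle is the bookkeeping in (4): keeping the signs straight across the Ricci identity, the outward-normal convention, and the Gauss/Weingarten decomposition of $\bar\nabla\nu$ is the only place where a careless slip would propagate through the rest of the paper, especially into the crucial linearized operator analysis in later sections.
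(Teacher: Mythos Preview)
Your approach is correct and is the standard derivation; the paper itself states Lemma~\ref{evo} without proof (these are the usual first-variation formulas for any normal deformation $\partial_t\phi=\psi\nu$, specialized to $\psi=f-F$), and the Appendix computes essentially the same identities for a general variation field $V$, which specialize to (1)--(5) when $V=(f-F)\nu$.

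One sign slip to flag, precisely where you predicted it: in the paper's curvature convention one has $g^{ij}\bar R_{3i3j}=-\bar R\mathrm{ic}(\nu,\nu)$, not $+\bar R\mathrm{ic}(\nu,\nu)$. You can read this off either from the internal consistency of (4) and (5) (tracing (4) and using $\partial_t g^{ij}=-2(f-F)h^{ij}$ forces the minus sign), or from the computation later in the paper where $\bar R_{3131}+\bar R_{3232}=-\bar R\mathrm{ic}(\nu,\nu)$ is used. With that correction your trace in (5) gives exactly $\Delta F-(f-F)\bigl(|A|^2+\bar R\mathrm{ic}(\nu,\nu)\bigr)$.
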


Using Lemma \ref{lem1.2}, we can readily derive the following additional equations, employing a method analogous to that in \cite{Huisken1987TheVP}. We denote the linearized operator by $\mathcal{L}f=F^{kl}f_{kl}$. It is straightforward to compute that $F^{kl}h_{kl}=F$.
\begin{lem}\label{lem1.5}
The second fundamental form satisfies the evolution equations 
\begin{enumerate}
\item 
\begin{align*}
\frac{\partial}{\partial t}h_{ij}  = & \mathcal{L}h_{ij}-(2F-f)h_{im}h_{mj}+F^{kl}h_{mk}h_{ml}h_{ij}-F^{kl}\bar{R}_{3k3l}h_{ij}\\
& -F^{kl}(\bar{\n}_l\bar{R}_{3jki}+\bar{\n}_i\bar{R}_{3klj})+f\bar{R}_{3i3j}+F^{kl,pq}\n_ih_{pq}\n_jh_{kl}\\
& +F^{kl}(2h_{km}\bar{R}_{mjil}-h_{jm}\bar{R}_{mkli}-h_{im}\bar{R}_{mklj}), 
\end{align*}

\item 
\begin{align*}
\frac{\partial}{\partial t}H  = & \mathcal{L}H-f(|A|^{2}+\bar{R}ic(\nu,\nu))-2F^{kl}(h_m^i\bar{R}_{mkli}-h_{lm}\bar{R}_{miik})\\
& -F^{kl}(\bar{\n}_l\bar{R}_{3iki}+\bar{\n}^j\bar{R}_{3klj})+HF^{kl}(h_{mk}h_{ml}-\bar{R}_{3k3l})\\
& +F^{kl,pq}\n_ih_{pq}\n^ih_{kl},
\end{align*}

\item 
\begin{align*}
   \frac{\partial}{\partial t}\ch  =&  \mathcal{L}\ch+(f-2F)\mathring{h}_{mi}\mathring{h}_{mj}-FH\mathring{h}_{ij}+\f{H^2}4F^{kl}g_{kl}\ch+\f f2\ma g_{ij}\\
   & +\red{HF^{kl}\mathring{h}_{kl}\mathring{h}_{ij}}+F^{kl}\mathring{h}_{mk}\mathring{h}_{ml}\ch-F^{kl}\bar{R}_{3k3l}\ch+f(\bar{R}_{3i3j}+\f12\bar{R}ic(\nu,\nu)g_{ij})\\
   & - F^{kl}\left(\bar{\n}_l\bar{R}_{3jki}+\bar{\n}_i\bar{R}_{3klj}-\f{\bar{\n}_l\bar{R}_{3mkm}+\bar{\n}^m\bar{R}_{3klm}}2g_{ij}\right)\\
   & +F^{kl,pq}\left(\n_ih_{pq}\n_jh_{kl}-\f{\n_mh_{pq}\n^mh_{kl}}2g_{ij}\right)\\ 
   & +F^{kl}\left(2\mathring{h}_{km}\bar{R}_{mjil}-\mathring{h}_{jm}\bar{R}_{mkli}-\mathring{h}_{im}\bar{R}_{mklj}\right)\\
   &-F^{kl}(\mathring{h}_{lm}\bar{R}_{mppk}-\mathring{h}_{mp}\bar{R}_{mklp})g_{ij},
\end{align*}

\item
\begin{align*}
\frac{\partial}{\partial t}|A|^2  = &\mathcal{L}|A|^{2}-2F^{kl}h_{ij,k}h_{ij,l}-2f\tr(A^3)+2F^{kl}h_{mk}h_{ml}\abs{A}^2 \\
&+2fh_{ij}\bar{R}_{3i3j}-2|A|^{2}F^{kl}\bar{R}_{3k3l}-4F^{kl}h_{ij}(h_{jm}\bar{R}_{mkli}-h_{km}\bar{R}_{mjil}) \\
&-2F^{kl}h_{ij}(\bar{\nabla}_{l}\bar{R}_{3jki}+\bar{\nabla}_{i}\bar{R}_{3klj})+2F^{kl,pq}h_{ij}\n_ih_{kl}\n_jh_{pq},
\end{align*}

\item 
\begin{align*}
   \frac{\partial}{\partial t}|\mathring{A}|^2  = & \mathcal{L}(\ma)+F^{kl}(H_kH_l-2h_{ij,k}h_{ij,l})+f(H\abs{A}^2-2\tr{A^3})\\
    & +2F^{kl}h_{mk}h_{ml}|\mathring{A}|^2+F^{kl,pq}(2h_{ij}\n_ih_{kl}\n_jh_{pq}-H\n_ih_{kl}\n_ih_{pq})\\
     & +\red{HF^{kl}\mathring{h}_{kl}\ma}+2fh_{ij}\bar{R}_{3i3j}+fH\bar{R}ic(\nu,\nu)-2F^{kl}\bar{R}_{3k3l}\ma\\
    & -F^{kl}\Big(2h_{ij}(\bar{\nabla}_{l}\bar{R}_{3jki}+\bar{\nabla}_{i}\bar{R}_{3klj})-H(\n_l\bar{R}_{3jkj}+\n^j\bar{R}_{3klj})\Big)\\
    & -2F^{kl}\Big(2h_{ij}(h_{im}\bar{R}_{mklj}-h_{km}\bar{R}_{mjil})-H(h_{mi}\bar{R}_{mkli}-h_{lm}\bar{R}_{miik})\Big)\\
     =  & \mathcal{L}(\ma)-2F^{kl}\mathring{h}_{ij,k}\mathring{h}_{ij,l}-2fH\ma+2F^{kl}h_{mk}h_{ml}\ma\\
    & +2f\mathring{h}_{ij}\bar{R}_{3i3j}-2F^{kl}\bar{R}_{3k3l}\ma-2F^{kl}\mathring{h}_{ij}(\bar{\n}_l\bar{R}_{3jki}+\bar{\n}_i\bar{R}_{3klj})\\
    & -4F^{kl}\mathring{h}_{ij}(h_{jm}\bar{R}_{mkli}-h_{km}\bar{R}_{mjil})+2F^{kl,pq}\ch\n_ih_{kl}\n_jh_{pq}.
\end{align*}
\end{enumerate}
\end{lem}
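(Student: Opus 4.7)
The plan is to derive the five evolution equations sequentially, using Lemma \ref{evo} as the starting point and feeding it through Lemma \ref{lem1.2} to convert the Hessian $\nabla_i \nabla_j F$ into the linearized operator $\mathcal{L} h_{ij} = F^{kl} h_{ij;kl}$ applied to the shape operator plus lower-order curvature and gradient corrections. First I would derive (1) directly: substitute the Simons-type identity from Lemma \ref{lem1.2} into Lemma \ref{evo}(4), then use the homogeneity relation $F^{kl} h_{kl} = F$ to combine $F^{kl}h_{im}h_{mj}h_{kl}$ with $(f-F)h_{il}h_{lj}$ into the coefficient $-(2F-f)$ of $h_{im}h_{mj}$. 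The ambient curvature contractions and the second-variation term $-F^{kl,pq}\nabla_i h_{pq}\nabla_j h_{kl}$ (a genuinely fully-nonlinear contribution absent from \cite{Huisken1987TheVP}) then carry over unchanged.

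For (2) I would take the trace of (1) against $g^{ij}$, remembering the time-derivative of the metric from Lemma \ref{evo}(1) so that $\partial_t H = g^{ij}\partial_t h_{ij} - 2(f-F)|A|^2$; symmetrizing and relabeling indices via $g^{ij}\bar{R}_{3i3j} = \bar{R}ic(\nu,\nu)$ yields the stated form. For (3) I would write $\mathring{h}_{ij} = h_{ij} - \tfrac12 H g_{ij}$ and combine (1), (2), and Lemma \ref{evo}(1); the key step is reorganizing each cubic expression via $h_{ij} = \mathring{h}_{ij} + \tfrac12 H g_{ij}$ so that quantities like $HF\mathring{h}_{ij}$ and $\tfrac{H^2}{4}F^{kl}g_{kl}\mathring{h}_{ij}$ appear naturally after the trace-free decomposition of $F^{kl}h_{km}h_{ml}h_{ij}$. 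Equation (4) then follows from $\partial_t|A|^2 = 2 h^{ij}\partial_t h_{ij} - 4(f-F)\operatorname{tr}(A^3)$ combined with the product-rule identity $h^{ij}\mathcal{L}h_{ij} = \tfrac12\mathcal{L}(|A|^2) - F^{kl}h_{ij,k}h_{ij,l}$. The first form of (5) comes from substituting (4) and $H\cdot$(2) into $\partial_t|\mathring{A}|^2 = \partial_t|A|^2 - H\partial_t H$, and the compact second form from the decomposition $h_{ij,k} = \mathring{h}_{ij,k} + \tfrac12 H_{,k}g_{ij}$, which yields $F^{kl}(H_k H_l - 2 h_{ij,k}h_{ij,l}) = -2 F^{kl}\mathring{h}_{ij,k}\mathring{h}_{ij,l}$.

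The principal obstacle is algebraic bookkeeping rather than any conceptual hurdle. Lemma \ref{lem1.2} produces eight distinct ambient curvature and curvature-derivative contractions on its right-hand side, and each must be transported carefully through the trace (for $H$) and the trace-free projection (for $\mathring{h}_{ij}$ and $|\mathring{A}|^2$) while respecting the index symmetries of $\bar{R}_{ijkl}$. A secondary subtlety is the second-variation term $F^{kl,pq}$: because $F$ is fully nonlinear this quantity is not identically zero, and one must verify that after contraction with $g^{ij}$ or $h^{ij}$ it assembles into precisely the structure appearing in (2), (4), and (5), aligning in sign with the $F^{kl}h_{ij,k}h_{ij,l}$ gradient term so that the resulting evolution equations remain amenable to the maximum-principle arguments used in the later sections.
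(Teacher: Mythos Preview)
Your proposal is correct and follows essentially the same route as the paper, which does not spell out a proof but simply states that the equations ``can readily [be] derive[d]'' from Lemma~\ref{lem1.2} by the method analogous to \cite{Huisken1987TheVP}. Your outline---substituting the Simons-type identity into Lemma~\ref{evo}(4) to get (1), then tracing, projecting onto the trace-free part, and contracting with $h^{ij}$ and $\mathring{h}^{ij}$ to obtain (2)--(5)---is exactly that method, and your identification of the $F^{kl,pq}$ second-variation term and the algebraic bookkeeping as the only real work is accurate.
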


\vspace{.2in}

\section{Long time existence and exponential convergence}
In this section, we show the long time existence and expoential convergence of the flow \eqref{flow}. We begin by recalling the following description of round surfaces in $N$ presented in \cite{Huisken1996DefinitionOC}.

\begin{prop}\label{2.1} (Proposition 2.1 of \cite{Huisken1996DefinitionOC})
Suppose $M$ is a hypersurface in $(N, \bar{g})$ such that $r(y)\geq \frac{1}{10}\max_M r=:r_1$ for all $y\in M$ and such that for some constants $K_1, K_2,$
\[
|\mathring{A}|\leq K_1r_1^{-3},\quad |\nabla\mathring{A}|\leq K_2r_1^{-4}.
\]
Then there is an absolute constant $c$ such that the curvature $A^e$ of $M$ with respect to the Euclidean metric satisfies
\[
|\mathring{A}^e|\leq c(K_1+C_0)r_1^{-3},\quad |\nabla^eA^e|\leq c(K_2+C_0)r_1^{-4},
\]
provided $r_1\geq c(C_0+K_1)$. Moreover, there is a number $r_0\in {\mathbb R}$ and a vector $\vec{a}\in {\mathbb R}^3$ such that
\[
|\lambda_i^e-r_0^{-1}|\leq c(K_1
+K_2+C_0)r_1^{-3},\quad i=1,2,
\]
\[
|(y-\vec{a})-r_0\nu_{e}|\leq c(K_1
+K_2+C_0)r_1^{-1},
\]
\[
|\nu_{e}-r_0^{-1}(y-\vec{a})|\leq c(K_1
+K_2+C_0)r_1^{-2}.
\]
Here $y$ and $\nu_e$ are the position vector and the unit normal of $M$ in ${\mathbb R}^3$, respectively.
\end{prop}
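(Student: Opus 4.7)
The plan is to convert the intrinsic hypotheses on $|\mathring{A}|$ and $|\n\mathring{A}|$ (taken with respect to $\bar g$) into Euclidean estimates, and then to invoke a quantitative umbilic rigidity statement for closed surfaces in $\mathbb{R}^3$.

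First I would exploit the explicit form of the asymptotic metric $\bar g_{\alpha\beta}=(1+m/2r)^4\delta_{\alpha\beta}+P_{\alpha\beta}$ together with the decay bounds for $P$ controlled by $C_0$. On the region where $r\geq r_1$, the conformal factor and the error $P$ differ from the flat metric by $O(C_0 r_1^{-1})$, with analogous bounds on derivatives. Writing the $\bar g$-Christoffel symbols in terms of the flat ones and the conformal factor, the difference between the $\bar g$-Weingarten map and the Euclidean Weingarten map is a tensorial expression involving $m/r$, $P$ and their first derivatives. A direct comparison then gives
\[
|\mathring A^e|\leq |\mathring A|+c C_0 r_1^{-3},\qquad |\n^e A^e|\leq |\n\mathring A|+|\n H|+c C_0 r_1^{-4},
\]
after using $\bar g\approx\delta$ and absorbing the $|H|$ terms into the radius scale (since on such a region $H\sim 2/r$). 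Combined with the hypotheses $|\mathring A|\leq K_1 r_1^{-3}$ and $|\n\mathring A|\leq K_2 r_1^{-4}$, this yields the first pair of estimates of the proposition.

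Next I would work entirely in Euclidean space. Since $|\mathring A^e|\leq c(K_1+C_0)r_1^{-3}$ and $|\n^e A^e|\leq c(K_2+C_0)r_1^{-4}$, the Codazzi equation for surfaces in $\mathbb{R}^3$ gives $|\n^e H^e|\leq 2|\n^e\mathring A^e|$, so $H^e$ varies by at most $c(K_2+C_0)r_1^{-3}\cdot\mathrm{diam}^e(M)$; because $M\subset\{r\leq 10 r_1\}$ by hypothesis, $H^e$ is nearly constant on $M$ up to an error $c(K_1+K_2+C_0)r_1^{-2}$. Setting $r_0$ to be (twice the reciprocal of) the average of $H^e$ produces the principal curvature estimate $|\lambda_i^e-r_0^{-1}|\leq c(K_1+K_2+C_0)r_1^{-3}$.

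Finally, to extract the center $\vec a$ and prove the position and normal estimates, I would use the standard ODE-type argument pioneered by Huisken: on $M$ one has
\[
\n^e_i\bigl(y-r_0\nu_e\bigr)=\bigl(\delta_{ij}-r_0 h^e_{ij}\bigr)e_j=-r_0\,\mathring h^e_{ij}e_j+\bigl(1-\tfrac{r_0}{2}H^e\bigr)e_i,
\]
whose right-hand side is bounded by $c(K_1+K_2+C_0)r_1^{-2}$. Integrating along paths in $M$ shows $y-r_0\nu_e$ is nearly constant on $M$, so we may define $\vec a$ to be this constant, which immediately gives $|(y-\vec a)-r_0\nu_e|\leq c(K_1+K_2+C_0)r_1^{-1}$. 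Dividing through by $r_0\sim r_1$ and rearranging yields the final estimate $|\nu_e-r_0^{-1}(y-\vec a)|\leq c(K_1+K_2+C_0)r_1^{-2}$.

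The main obstacle I expect is the bookkeeping in the conformal/perturbative comparison between $A$ and $A^e$: one must carefully track how derivatives of $P$ and of the conformal factor mix the trace and traceless parts, since an a priori bound on $|\mathring A|$ does not bound $|A|$ itself without separately using that $H$ is comparable to $2/r$ on such nearly round surfaces. Once that comparison is in place, the Euclidean part is an essentially classical argument going back to Huisken's integration of the traceless Codazzi identity.
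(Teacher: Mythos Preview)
The paper does not supply its own proof of this proposition; it simply quotes it verbatim from Huisken--Yau \cite{Huisken1996DefinitionOC} and uses it as a black box. Your sketch is precisely the argument Huisken--Yau give: compare $\mathring A$ and $\mathring A^e$ using the explicit conformal-plus-perturbation structure of $\bar g$, then in Euclidean space use Codazzi to see that $H^e$ is nearly constant, set $r_0$ from the average of $H^e$, and integrate the identity $\nabla^e_i(y-r_0\nu_e)=(\delta_{ij}-r_0 h^e_{ij})e_j$ along paths in $M$ to locate $\vec a$.

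One small point to tighten: in your first step you write $|\nabla^e A^e|\leq|\nabla\mathring A|+|\nabla H|+cC_0 r_1^{-4}$ and then appeal to ``$H\sim 2/r$'' to control the $|\nabla H|$ term. You do not yet know $H\sim 2/r$ at that stage, and in any case that would only bound $H$, not $\nabla H$. The correct route (and the one Huisken--Yau take) is to use the Codazzi equation in $(N,\bar g)$, which gives $\nabla_i H=2\nabla_j\mathring h_{ij}+2\bar R_{3lil}$; since $|\bar Ric|\leq cC_0 r^{-3}$ on this region, one gets $|\nabla H|\leq 2|\nabla\mathring A|+cC_0 r_1^{-4}$ directly from the hypotheses, without any circularity. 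With that adjustment your outline matches the original proof.
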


For $\sigma\geq1$ and $B_1, B_2, B_3$ nonnegative numbers, we define a set $\mathcal{B}_{\sigma}(B_1, B_2, B_3)$ of  round surfaces in $(N, \bar{g})$ by setting
\[ \mathcal{B}_{\sigma}:=\{ M\subset N|\sigma-B_1\leq r\leq \sigma+B_1, |\mathring{A}|\leq B_2\sigma^{-3}, |\nabla\mathring{A}|\leq B_3\sigma^{-4} \}. \]

The following proposition provides the precise information regarding the mean curvature of surfaces in $\mathcal{B}_{\sigma}(B_1, B_2, B_3)$.

\begin{prop}\label{2.2} (Proposition 2.2 of \cite{Huisken1996DefinitionOC})
Let $M$ be a surface in $\mathcal{B}_{\sigma}(B_1, B_2, B_3)$. Suppose $\sigma\geq c(B_1+B_2+C_0)$ is such that all assumptions of Proposition \ref{2.1} are satisfied
and let $r_0$ and $\vec{a}$ be as in that proposition. Then there is an absolute constant $c$ such that the principal curvatures and mean curvature of $M$ satisfies
\[
\left|\lambda_i-\frac{1}{r_0}+\frac{2m}{r_0^2}-\frac{3m\langle \vec{a}, \nu_e\rangle_e}{r_0^3}\right|\leq c(C_0^2+B_2+B_3)\sigma^{-3},\quad i=1,2,
\]
\[
\left|H-\frac{2}{r_0}+\frac{4m}{r_0^2}-\frac{6m\langle \vec{a}, \nu_e\rangle_e}{r_0^3}\right|\leq c(C_0^2+B_2+B_3)\sigma^{-3},
\]
provided $\sigma\geq c(B_1^2+B_2+C_0)$.
\end{prop}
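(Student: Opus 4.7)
The plan is to express the principal curvatures of $M$ in the ambient metric $\bar{g}$ as an explicit function of the Euclidean principal curvatures $\lambda_{i}^{e}$ plus a correction coming from the Schwarzschild conformal factor $u=1+m/(2r)$, and then expand everything using the asymptotic data furnished by Proposition~\ref{2.1}. Concretely I decompose $\bar{g}=u^{4}\delta+P$ into the conformally flat Schwarzschild piece $\bar{g}_{S}=u^{4}\delta$ and a perturbation satisfying $|P|\leq C_{0}r^{-2}$, $|\partial P|\leq C_{0}r^{-3}$, and handle each separately.

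The first step is the conformal transformation law. Writing $\bar{g}_{S}=e^{2\phi}\delta$ with $\phi=2\log u$, the principal curvatures of $M$ in $\bar{g}_{S}$ satisfy $\lambda_{i}^{S}=u^{-2}(\lambda_{i}^{e}+\phi_{\nu})$, where $\phi_{\nu}$ is the Euclidean normal derivative of $\phi$. Using $u_{r}=-m/(2r^{2})$, this reduces to
\[
\lambda_{i}^{S} \;=\; u^{-2}\lambda_{i}^{e} \;-\; \frac{m}{u^{3}r^{2}}\langle \hat{r},\nu_{e}\rangle_{e}.
\]

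The heart of the argument is the asymptotic expansion in powers of $r_{0}^{-1}$. From Proposition~\ref{2.1}, $\lambda_{i}^{e}=r_{0}^{-1}+O((C_{0}+B_{2}+B_{3})r_{1}^{-3})$ and $y=\vec{a}+r_{0}\nu_{e}+O((C_{0}+B_{2}+B_{3})r_{1}^{-1})$; squaring the latter gives $r=r_{0}+\langle \vec{a},\nu_{e}\rangle_{e}+O(r_{0}^{-1})$, whence
\[
r^{-1}=r_{0}^{-1}-\langle \vec{a},\nu_{e}\rangle_{e}r_{0}^{-2}+O(r_{0}^{-3}),\qquad \langle \hat{r},\nu_{e}\rangle_{e}=1+O(r_{0}^{-2}).
\]
Substituting these into the Taylor expansions $u^{-2}=1-m/r+\tfrac{3}{4}(m/r)^{2}+O(r^{-3})$ and $u^{-3}=1+O(r^{-1})$, the term $u^{-2}\lambda_{i}^{e}$ produces $r_{0}^{-1}-mr_{0}^{-2}+m\langle \vec{a},\nu_{e}\rangle_{e}r_{0}^{-3}$ modulo errors, whereas $u^{-3}(m/r^{2})\langle \hat{r},\nu_{e}\rangle_{e}$ produces $mr_{0}^{-2}-2m\langle \vec{a},\nu_{e}\rangle_{e}r_{0}^{-3}$ modulo errors. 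Their difference equals
\[
r_{0}^{-1}-2m r_{0}^{-2}+3m\langle \vec{a},\nu_{e}\rangle_{e}r_{0}^{-3}
\]
modulo a remainder of order $(C_{0}^{2}+B_{2}+B_{3})\sigma^{-3}$; the $C_{0}^{2}$ absorbs the $m^{2}r_{0}^{-3}$ contributions using $m\leq C_{0}$, and the $B_{2}+B_{3}$ comes from the Proposition~\ref{2.1} error on $\lambda_{i}^{e}$. The mean curvature estimate follows by summing over $i=1,2$, which produces the factor $6m$ in front of $\langle \vec{a},\nu_{e}\rangle_{e}/r_{0}^{3}$.

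It remains to pass from $\bar{g}_{S}$ to the full metric $\bar{g}$. The difference $h_{ij}-h_{ij}^{S}$ depends linearly on $P$ and $\partial P$, modulated by quantities that are uniformly bounded on $M$ by Proposition~\ref{2.1}, so $|A-A^{S}|\leq cC_{0}\sigma^{-3}$, which again fits into the stated error. The main obstacle is the bookkeeping in the expansion: one must retain \emph{exactly} the right number of sub-leading terms in each of $u^{-2}$, $r^{-1}$, and $\langle \hat{r},\nu_{e}\rangle_{e}$ so that the coefficient of $\langle \vec{a},\nu_{e}\rangle_{e}/r_{0}^{3}$ emerges as the clean sum $m+2m=3m$, while simultaneously confirming that every higher-order term, as well as the perturbative contribution from $P_{\alpha\beta}$, is genuinely controlled at order $\sigma^{-3}$.
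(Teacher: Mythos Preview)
Your argument is correct and is essentially the standard proof of this result. Note, however, that the paper does not supply its own proof of this proposition: it is quoted verbatim as Proposition~2.2 of Huisken--Yau \cite{Huisken1996DefinitionOC}, and your derivation via the conformal transformation law $\lambda_i^S=u^{-2}\lambda_i^e-\tfrac{m}{u^3r^2}\langle\hat r,\nu_e\rangle_e$, followed by the expansion of $r^{-1}$ and $\langle\hat r,\nu_e\rangle_e$ in $r_0$ and $\langle\vec a,\nu_e\rangle_e$ using Proposition~\ref{2.1}, together with the trivial $O(C_0\sigma^{-3})$ bound on the contribution of $P_{\alpha\beta}$, is precisely the argument given there. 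The one point worth making explicit in your write-up is why the hypothesis $\sigma\geq cB_1^2$ is needed: the bound $|\vec a|\leq cB_1$ (which follows from $\sigma-B_1\leq r\leq \sigma+B_1$ and $|r_0-\sigma|\leq c$) enters your expansion of $r=r_0+\langle\vec a,\nu_e\rangle_e+O(r_0^{-1})$ through the quadratic term $|\vec a|^2/r_0$, and it is exactly the assumption $\sigma\geq cB_1^2$ that forces this into the $O(\sigma^{-3})$ error after division by $r_0^2$.
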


For the proof of long time existence, we will first demonstrate $\Sigma_t$ remains round during the flow \eqref{flow}.

\begin{thm}\label{3.3}
    There are constants $\sigma_0, B_1, B_2, B_3$ depending only on $C_0$ such that for all $\sigma\geq\sigma_0$, the solution $\Sigma_t$ of \eqref{flow} remains in $\mathcal{B}_{\sigma}(B_1, B_2, B_3)$ as long as it exists.
\end{thm}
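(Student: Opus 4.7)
The plan is a continuity/bootstrap argument. Fix constants $B_1, B_2, B_3$ to be chosen large (depending only on $C_0$) and let
$$T^{*} := \sup\bigl\{\tau \in (0, T_{\max}] : \Sigma_t \in \mathcal{B}_\sigma(2B_1, 2B_2, 2B_3) \text{ for all } t \in [0,\tau)\bigr\}.$$
Short-time existence of the fully nonlinear flow on $[0, T^{*})$ is standard once one verifies from Proposition~\ref{2.2} that $F$ is uniformly elliptic and positive on nearly round surfaces, so $T^{*} > 0$. Since $\Sigma_0 = S_\sigma(0)$ satisfies $|\mathring{A}| = O(\sigma^{-3})$ and $|\nabla \mathring{A}| = O(\sigma^{-4})$ with constants depending only on $C_0$ (all deviation coming from the Schwarzschild perturbation of the flat metric), the initial datum sits well inside $\mathcal{B}_\sigma(B_1, B_2, B_3)$ for $\sigma$ large. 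To conclude $T^{*} = T_{\max}$, it suffices to improve, on $[0, T^{*})$, the a priori bounds to the strict ones $|\mathring{A}| \leq B_2\sigma^{-3}$, $|\nabla\mathring{A}|\leq B_3\sigma^{-4}$, and $\sigma - B_1 \leq r \leq \sigma + B_1$.

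The heart of the argument is a maximum principle applied to the $|\mathring{A}|^2$ evolution equation in Lemma~\ref{lem1.5}(5). On a nearly round surface, in principal coordinates one has $F^{kl} = \tfrac{1}{4}g^{kl} + O(|\mathring{A}|)$, while Proposition~\ref{2.2} gives $H \approx 2/r$, $|A|^2 \approx 2/r^2$, and $f \approx 1/(2r)$ up to terms of order $\sigma^{-2}$. The dominant reaction coefficient is therefore
$$-2fH + 2F^{kl}h_{mk}h_{ml} = -\frac{1}{r^2} + O(\sigma^{-3}),$$
strictly negative of order $-\sigma^{-2}$. The ambient-curvature and derivative-of-curvature terms are controlled by $C(C_0)\sigma^{-5}|\mathring{A}| + C(C_0)\sigma^{-8}$, after absorbing any $|\nabla A|^2$ piece against the good gradient term $-2F^{kl}\mathring{h}_{ij,k}\mathring{h}_{ij,l}$ via Lemma~\ref{lem1.4}. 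Combining everything yields, at a spatial maximum, the ODE inequality
$$\frac{d}{dt}\max_{\Sigma_t}|\mathring{A}|^2 \leq -\frac{c}{\sigma^2}\max_{\Sigma_t}|\mathring{A}|^2 + \frac{C(C_0)}{\sigma^{8}},$$
so $|\mathring{A}|^2 \leq B_2^2\sigma^{-6}$ with $B_2$ strictly smaller than the a priori $2B_2$, provided $B_2$ and $\sigma$ are chosen large depending only on $C_0$. A parallel argument applied to the evolution of $|\nabla \mathring{A}|^2$ (derived as in the CMC case but with the extra fully nonlinear contributions) closes the gradient estimate.

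For the radial bound, Proposition~\ref{2.1} applied with the just-improved $|\mathring{A}|$ and $|\nabla\mathring{A}|$ bounds produces approximating Euclidean data $(r_0, \vec a)$ for $\Sigma_t$. Volume preservation between $\Sigma_t$ and $S_{m/2}(0)$ pins $r_0 = \sigma + O(1)$; the Schwarzschild mass contribution $-6m\langle \vec a,\nu_e\rangle/r_0^3$ in Proposition~\ref{2.2} then forces $|\vec a|$ to stay bounded, since otherwise the oscillation of $F$ across $\Sigma_t$ would exceed what is compatible with $|\mathring{A}|\leq 2B_2\sigma^{-3}$. The main obstacle is the fully nonlinear cross term $2F^{kl,pq}\mathring{h}_{ij}\nabla_i h_{kl}\nabla_j h_{pq}$ in Lemma~\ref{lem1.5}(5), which is of the same order $\sigma^{-8}$ as the dominant good term and is a priori of indefinite sign. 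I plan to handle it by exploiting the concavity of $F = \lambda_1\lambda_2/(\lambda_1+\lambda_2)$ at near-equal principal curvatures, so that $F^{kl,pq}\nabla h\, \nabla h$ is non-positive to leading order, and then using the Codazzi identity to trade $\nabla h$ for $\nabla \mathring{h}$ plus a lower-order curvature piece, absorbing what remains into the good gradient term via Cauchy--Schwarz. This step is what distinguishes the HMCF analysis from the quasilinear framework of Huisken--Yau; once it is in place the remainder proceeds by their strategy.
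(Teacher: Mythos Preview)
Your bootstrap structure matches the paper's, and your direct maximum-principle approach to $|\mathring{A}|^2$ is fine (the paper works with $e=|\mathring{A}|^2/H^2$ instead, following Huisken--Yau, but both close). Two points need correcting.

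The radial bound argument is wrong as stated. You claim that if $|\vec a|$ were large, the oscillation of $F$ across $\Sigma_t$ ``would exceed what is compatible with $|\mathring{A}|\le 2B_2\sigma^{-3}$''. But $|\mathring{A}|$ does not control the oscillation of $F$: for an off-center coordinate sphere in Schwarzschild, both principal curvatures shift \emph{together} by the $3m\langle\vec a,\nu_e\rangle/r_0^3$ term in Proposition~\ref{2.2}, so $|\mathring{A}|$ stays $O(\sigma^{-3})$ while $F\approx H/4$ oscillates by $\sim m|\vec a|\sigma^{-3}$, unconstrained by $B_2$. The paper's mechanism (Proposition~\ref{3.5}) is a barrier argument on the position: at the first time $r$ reaches $\sigma+D$, the speed satisfies $f-F\ge 0$ there, yet at that point $\langle\vec a,\nu_e\rangle\gtrsim\tfrac12|\vec a|$ and the expansion gives $F-f\ge\bigl(\tfrac{m}{2}|\vec a|-c(C_0^2+B_2+B_3)\bigr)\sigma^{-3}$, forcing $|\vec a|\le c(m^{-1}+1)(C_0^2+B_2+B_3)$. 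An oscillation argument can be made to work, but it must go through $|\nabla F|\le c\sigma^{-4}$, which comes from the $|\nabla\mathring A|$ bound via Lemma~\ref{lem1.4}, not from $|\mathring A|$.

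Your treatment of the cross term $2F^{kl,pq}\mathring h_{ij}\nabla_ih_{kl}\nabla_jh_{pq}$ overstates the difficulty and misapplies concavity. Under the bootstrap, Lemma~\ref{lem1.9} gives $|d^2F|\le c\sigma$ and Lemma~\ref{lem1.4} gives $|\nabla A|^2\le c(B_3^2+C_0^2)\sigma^{-8}$, so this term is $\le c|\mathring A|\sigma^{-7}$, two full orders below the genuine forcing $cC_0|\mathring A|\sigma^{-5}$ from $2f\mathring h_{ij}\bar R_{3i3j}$ (cf.\ Lemma~\ref{lem1.7}(6)). Concavity of $F$ only gives a sign for each $F^{kl,pq}\nabla_ih_{kl}\nabla_ih_{pq}$; in a principal frame the $\mathring h_{ij}$-weighted term becomes $\tfrac{\lambda_1-\lambda_2}{2}\bigl(F^{kl,pq}\nabla_1h_{kl}\nabla_1h_{pq}-F^{kl,pq}\nabla_2h_{kl}\nabla_2h_{pq}\bigr)$, a difference of two nonpositive quantities with no sign. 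The brute-force estimate suffices; the concavity detour is neither needed nor applicable.
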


We will always assume that $\sigma, B_1, B_2, B_3$ are chosen such that $\Sigma_0=S_{\sigma}(0)$  is strictly inside $\mathcal{B}_{\sigma}(B_1, B_2, B_3)$.
In the next two propositions we control the position of $\Sigma_t$ during the evolution.
Since the flow \eqref{flow} preserves the volume of the region enclosed by $\Sigma_t$ and a fixed surface $S_1(0)$, the proof of Proposition 3.4 follows similarly as in \cite{Huisken1996DefinitionOC}.

\begin{prop}\label{3.4}(Proposition 3.4 in \cite{Huisken1996DefinitionOC})
    Suppose that $\Sigma_t$ is a smooth solution of the flow \eqref{flow} contained in $\mathcal{B}_{\sigma}(B_1, B_2, B_3)$ for all $t\in [0, T]$. Assume that $\sigma\geq c(C_0+B_1+B_2)$ is such that Proposition \ref{2.1} applies and $r_0(t)$ is defined as in that result. Then there is an absolute constant $c$ such that
    \[|r_0(t)-\sigma|\leq c(C_0+B_2+B_3) \]
    holds uniformly in $[0, T]$, provided that $\sigma\geq c(C_0+B_1+B_2)$.
\end{prop}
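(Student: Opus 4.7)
The plan is to exploit the volume-preservation property of the flow \eqref{flow}: the $\bar g$-volume $V(t) := \mathrm{Vol}_{\bar g}(\Omega_t)$ of the region $\Omega_t$ between $\Sigma_t$ and the fixed surface $S_1(0)$ is constant in $t$. Since $\Sigma_0 = S_\sigma(0)$, one has $V(t) = V(0) = \mathrm{Vol}_{\bar g}(B_\sigma(0) \setminus B_1(0))$, and the task is to translate this identity into a pointwise bound on $r_0(t)$.

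First I would use the expansion $\sqrt{\det\bar g} = (1 + m/(2r))^6 + O(C_0 r^{-2})$ coming from \eqref{metr} to rewrite each $\bar g$-volume as a weighted Euclidean integral of $(1+m/(2r))^6$, the contribution from $P_{\alpha\beta}$ yielding an error $O(C_0 \sigma)$ on the relevant annulus. By Proposition \ref{2.1}, $\Sigma_t$ is a graph of height at most $c(C_0+B_2+B_3)\sigma^{-1}$ over the Euclidean sphere of radius $r_0(t)$ centered at some $\vec a(t) \in \mathbb{R}^3$, which integrated against the bounded weight $(1+m/(2r))^6$ contributes a further error $O((C_0+B_2+B_3)\sigma)$.

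The delicate step is the comparison
\[ I(\vec a) := \int_{B_{r_0}(\vec a)} (1+m/(2r))^6\,d\mu_e - \int_{B_{r_0}(0)} (1+m/(2r))^6\,d\mu_e. \]
A naive symmetric-difference estimate yields only $|I(\vec a)| \leq c|\vec a|\sigma^2$, which combined with $|\vec a| \leq c B_1$ (extracted from the positional constraint $\sigma-B_1\leq r\leq\sigma+B_1$ by testing on near-antipodal points of $\Sigma_t$ along the direction of $\vec a$) would give only $|r_0-\sigma|=O(B_1)$. The key observation is that after the change of variables $z = y-\vec a$, the term linear in $\vec a$ vanishes by the spherical symmetry of $(1+m/(2r))^6$, leaving only the quadratic remainder of order $O(|\vec a|^2\log\sigma)$. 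This cancellation is the main technical point.

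Combining the above errors with the leading-order asymptotic $\int_{B_{r_0}(0)\setminus B_\sigma(0)}(1+m/(2r))^6\,d\mu_e \sim 4\pi\sigma^2(r_0(t)-\sigma)$ for $r_0$ close to $\sigma$, the identity $V(t)=V(0)$ becomes
\[ 4\pi\sigma^2\bigl|r_0(t)-\sigma\bigr| \leq c\bigl(B_1^2\log\sigma + (C_0+B_2+B_3)\sigma\bigr), \]
whence $|r_0(t)-\sigma|\leq c(C_0+B_2+B_3)$ once $\sigma$ is taken sufficiently large in terms of $C_0, B_1, B_2$, as required.
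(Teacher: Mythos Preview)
Your volume-preservation strategy is precisely what the paper invokes: it does not reprove the proposition but defers to \cite{Huisken1996DefinitionOC}, noting only that the present flow also preserves enclosed $\bar g$-volume. Your handling of $I(\vec a)$ is, however, more elaborate than needed and has a small gap.

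The symmetric difference $B_{r_0}(\vec a)\triangle B_{r_0}(0)$ lies entirely in the annulus $r_0-|\vec a|\le |y|\le r_0+|\vec a|$, on which $w=1+O(C_0\sigma^{-1})$; since Euclidean volume is translation-invariant, the constant part cancels and one obtains directly
\[
|I(\vec a)|\le cC_0\sigma^{-1}\cdot\mathrm{Vol}_e\bigl(B_{r_0}(\vec a)\triangle B_{r_0}(0)\bigr)\le cC_0\sigma^{-1}\cdot|\vec a|\,\sigma^2\le cC_0B_1\sigma,
\]
with no Taylor expansion required. This already yields $|r_0-\sigma|\le c\sigma^{-1}(C_0B_1+C_0+B_2+B_3)\le c(C_0+B_2+B_3)$ once $\sigma\ge cB_1$.

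Your quadratic-remainder route is also delicate as written: after the substitution $z=y-\vec a$ the integrand $w(|z+\vec a|)$ acquires a singularity at $z=-\vec a\in B_{r_0}(0)$, so Taylor-expanding the integrand over the full ball is not legitimate; and the stated $O(|\vec a|^2\log\sigma)$ bound does not close under the linear hypothesis $\sigma\ge c(C_0+B_1+B_2)$, since it would force $\sigma\ge cB_1\sqrt{\log\sigma}$. If you want the genuinely quadratic estimate, differentiate $\vec a\mapsto\int_{B_{r_0}(\vec a)\setminus B_1(0)}w$ via the boundary integral over $\partial B_{r_0}(\vec a)$ (where $w$ is smooth and bounded): the gradient vanishes at $\vec a=0$ by symmetry and the Hessian is $O(m)$, giving $|I(\vec a)|=O(m|\vec a|^2)$.
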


\begin{prop}\label{3.5}
    Suppose that the solution $\Sigma_t$ of \eqref{flow} is contained in $\mathcal{B}_{\sigma}(B_1, B_2, B_3)$ for all $t\in [0, T]$. Then there is an absolute constant $c$ such that
    \[ \max_{\Sigma_t}r\leq \sigma+c(m^{-1}+1)(C_0^2+B_2+B_3) \]
     holds uniformly in $[0, T]$, provided that $\sigma\geq c(C_0^2+B_1^2+B_2+B_3)$.
\end{prop}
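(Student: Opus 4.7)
The strategy is to derive a barrier-type differential inequality for $\rho(t):=\max_{\Sigma_t}r$ of the form
\[
\f{d\rho}{dt}\leq -\f{3m}{2r_0(t)^3}\bigl(\rho(t)-r_0(t)\bigr)+c(C_0^2+B_2+B_3)\sigma^{-3},
\]
which, combined with the initial value $\rho(0)=\sigma$ and the uniform estimate $|r_0-\sigma|\leq c(C_0+B_2+B_3)$ from Proposition \ref{3.4}, yields $\rho(t)\leq\sigma+c(m^{-1}+1)(C_0^2+B_2+B_3)$ by a Gr\"onwall-type comparison: the equilibrium of the ODI relative to the reference $r_0(t)$ is of order $m^{-1}(C_0^2+B_2+B_3)$, and the bound on $|r_0-\sigma|$ contributes the extra $c(C_0+B_2+B_3)\leq c(C_0^2+B_2+B_3)$ term (recall $C_0\geq 1$).

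At a point $p_t\in\Sigma_t$ realizing $\rho(t)$, the flow equation together with the standard Hamilton trick for maxima of Lipschitz functions gives
\[
\f{d\rho}{dt}\leq \bigl(f-F(p_t)\bigr)\<\bar{\n}r,\nu\>(p_t),
\]
and the asymptotic form \eqref{metr} implies $\<\bar{\n}r,\nu\>(p_t)=1+O(C_0 r^{-1})$ (at a max, the $\bar g$-tangential component of $\bar{\n}r$ vanishes). The main task is therefore to expand $f-F(p_t)$ and exhibit the leading-order damping.

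Applying Proposition \ref{2.2} to $F=\lambda_1\lambda_2/(\lambda_1+\lambda_2)$ yields the pointwise expansion
\[
F=\f{1}{2r_0}-\f{m}{r_0^2}+\f{3m\<\vec{a},\nu_e\>_e}{2r_0^3}+O\bigl((C_0^2+B_2+B_3)\sigma^{-3}\bigr),
\]
whose first two terms are independent of position on $\Sigma_t$ and cancel in $f-F(p_t)$. For the linear-in-$\vec{a}$ term, the key identity $\int_{\Sigma_t}\<\vec{a},\nu_e\>_e\,d\mu_e=0$ follows from the divergence theorem applied to the constant vector field $\vec{a}$ on the region enclosed by $\Sigma_t$; the discrepancy between $d\mu$ and $d\mu_e$ only perturbs this by an error of the required order. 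Hence
\[
f-F(p_t)=-\f{3m}{2r_0^3}\<\vec{a},\nu_e(p_t)\>_e+O\bigl((C_0^2+B_2+B_3)\sigma^{-3}\bigr).
\]
At $p_t$ the Euclidean outer normal is $\nu_e=y/|y|$, so the approximate spherical relation $|y-\vec{a}-r_0\nu_e|\leq c(C_0+B_2+B_3)\sigma^{-1}$ from Proposition \ref{2.1}, upon taking the inner product with $\nu_e$, gives $\<\vec{a},\nu_e(p_t)\>_e=\rho-r_0+O((C_0+B_2+B_3)\sigma^{-1})$. Substituting back produces the advertised ODI.

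The main obstacle will be a clean bookkeeping of the error terms in the expansion of the \emph{average} $f$. One must track how the discrepancy $d\mu-d\mu_e=O(mr^{-1})\,d\mu_e$ and the $O(|\vec{a}|/r_0^2)$ variation of $1/r$ across $\Sigma_t$ affect the vanishing identity $\int\<\vec{a},\nu_e\>_e\,d\mu_e=0$, noting that $|\vec{a}|$ itself is only a posteriori bounded by exactly the quantity we are trying to control, so the estimate must be made self-consistent. Once these residual errors are confirmed to stay at order $(C_0^2+B_2+B_3)\sigma^{-3}$, the linear damping $-\f{3m}{2r_0^3}(\rho-r_0)$ dominates as soon as $\rho-r_0\geq cm^{-1}(C_0^2+B_2+B_3)$, and the ODI closes in a routine manner.
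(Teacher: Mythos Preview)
Your proposal is correct and uses essentially the same ingredients as the paper: the pointwise expansion of $F$ from Proposition~\ref{2.2}, the vanishing of $\int_{\Sigma_t}\langle\vec a,\nu_e\rangle_e\,d\mu_e$ (up to the $d\mu$--$d\mu_e$ discrepancy you flag), and the observation that at the max of $r$ one has $\langle\vec a,\nu_e\rangle_e\approx\rho-r_0\approx|\vec a|$. The only packaging difference is that the paper does not set up an ODI and invoke Gr\"onwall; it simply argues by contradiction at the first time $\rho$ reaches $\sigma+D$, where $d\rho/dt\geq 0$ forces $f-F\geq 0$ at the max point, while the expansion gives $F-f\geq\bigl(\tfrac{m}{2}|\vec a|-c(C_0^2+B_2+B_3)\bigr)\sigma^{-3}>0$ for $D$ large---this first-crossing argument is exactly the ``cheap'' version of your ODI and avoids having to track the time evolution or the self-consistency of the $|\vec a|$-dependent errors you worry about in the last paragraph.
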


\begin{proof}
    Let $D>0$ and assume $\max_{\Sigma_t}r<\sigma+D$ is violated for the first time at $(y_0, t_0),\  t_0>0$.
    At that point $r(y_0, t_0)=\max_{\Sigma_t}r=\sigma+D$, $\langle y_0, \nu_e\rangle_e=r$ and 
   $(f-F)\langle \nu, \nu_e\rangle_e\geq0$.
    From Proposition \ref{2.1}, it follows that $\langle \nu, \nu_e\rangle_e\geq\frac{1}{2}$ and $\langle \Vec{a}, \nu_e\rangle_e\geq\frac{1}{2}|\vec{a}|$ at $(y_0, t_0)$ provided $\sigma\geq c(C_0+B_2+B_3)$. 
    So we have $F\leq f$ at $(y_0, t_0)$, but we get from Proposition \ref{2.2} that
    \begin{equation*}
        F=\frac{H^2-|A|^2}{2H}=\frac{H}{4}-\frac{|\mathring{A}|^2}{2H}=\frac{1}{2r_0}-\frac{m}{r_0^2}+\frac{3m\langle \vec{a}, \nu_e\rangle_e}{2r_0^3}+O((C_0^2+B_2+B_3)\sigma^{-3}),
    \end{equation*} 
    Thus, we get at $(y_0,t_0)$,
    \begin{equation}\label{3.2}
    \begin{split}
        F-f =& \frac{3m\langle \vec{a}, \nu_e\rangle_e}{2r_0^3}-\frac{3m}{2r_0^3}\oint_{\Sigma_t}\langle \vec{a}, \nu_e\rangle_ed\mu_t+O((C_0^2+B_2+B_3)\sigma^{-3})\\
        \geq& \frac{3m|\vec{a}|}{4r_0^3}-c(C_0^2+B_2+B_3)\sigma^{-3}\\
        \geq& (\frac{m|\vec{a}|}{2}-c(C_0^2+B_2+B_3))\sigma^{-3},
    \end{split}
    \end{equation} 
    provided that $\sigma\geq c(C_0^2+B_1^2+B_2+B_3)$. At $(y_0, t_0)$, in view of Proposition \ref{3.4},
     we have
     \[ |\vec{a}|\geq D-c(C_0+B_2+B_3). \]
     Combining with \eqref{3.2}, we finally get
     \[ 0\geq F-f\geq \left(\frac{m}{2}D-(\frac{m}{2}+1)c(C_0^2+B_2+B_3)\right)\sigma^{-3}, \]
     which yields a contradiction if $D>c(m^{-1}+1)(C_0^2+B_2+B_3)$.
\end{proof}

\begin{lem}\label{lem1.7}
    Let $\Sigma_t$ be a smooth solution of \eqref{flow} contained in $\mathcal{B}_\sigma(B_1,B_2,B_3)$,
for $t\in[0,T]$. Then there is an absolute constant c such that for $\sigma\geq c (C_{0}^{2}+B_1^2+B_2)$
\begin{enumerate}
\item
\[\left|\lambda_i-\frac1\sigma\right|+|F-f| \leq c (C_0^2+B_2+B_3)\sigma^{-2},\]
\item
\[
\frac{2f}{H^3}(|A|^4-H tr(A^3)) \leq-\frac18|\mathring{A} |^2, 
\]
\item
\[
\frac f{H^2}|\mathring{h}_{ij}\bar{R}_{i3j3}| \leq cC_0|\mathring{A} |\sigma^{-3}, 
\]
\item
\[
\left|F^{kl}\left({h_{ij}-\f{\abs{A}^2}{H}g_{ij}}\right)(h_{jm}\bar{R}_{mkil}-h_{lm}\bar{R}_{imjk})\right| \leq cC_0|\mathring{A}\mid^2\sigma^{-3}, 
\]
\item
\[
\left|F^{kl}\left({h_{ij}-\f{\abs{A}^2}{H}g_{ij}}\right)(\bar{\nabla}_{l}\bar{R}_{3jki}+\bar{\nabla}_{i}\bar{R}_{3klj})\right| \leq cC_0|\mathring{A} |\sigma^{-5}, 
\]
\item 
\begin{align*}
\left|F^{kl,pq}\left({h_{ij}-\f{\abs{A}^2}{H}g_{ij}}\right)\n_ih_{kl}\n_jh_{pq}\right|\leq c(C_0^2+B_1^2+B_3^2)|\mathring{A}|\sigma^{-7}.
\end{align*}
\end{enumerate}
\end{lem}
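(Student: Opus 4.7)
The plan is to treat Lemma \ref{lem1.7} as a package of pointwise estimates on a nearly round surface $\Sigma_t\in\mathcal{B}_\sigma(B_1,B_2,B_3)$. The three common ingredients available to me are the principal curvature expansion from Propositions \ref{2.1} and \ref{2.2}, the position bounds $|r_0-\sigma|\leq c(C_0+B_2+B_3)$ and $\max_{\Sigma_t}r\leq\sigma+c(m^{-1}+1)(C_0^2+B_2+B_3)$ from Propositions \ref{3.4}--\ref{3.5}, and the pointwise decay $|\bar R|\leq cC_0\sigma^{-3}$, $|\bar\nabla\bar R|\leq cC_0\sigma^{-4}$ that follows from \eqref{metr}. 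The overall strategy is, in each item, to extract the Schwarzschild (round) leading order and to bound the remainder either by $|\mathring A|$ or by the ambient perturbation $P_{\alpha\beta}$.

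For (1), I apply Proposition \ref{2.2} to expand each principal curvature around $1/r_0$ with error $O((C_0^2+B_2+B_3)\sigma^{-3})$, then use Proposition \ref{3.4} to replace $1/r_0$ by $1/\sigma$ at the price of an $O((C_0+B_2+B_3)\sigma^{-2})$ correction. The bound on $|F-f|$ follows because $F=\lambda_1\lambda_2/(\lambda_1+\lambda_2)=\tfrac{1}{2\sigma}+O(\sigma^{-2})$ and $f$ is the area-average of $F$ over $\Sigma_t$, so the difference vanishes to the same leading order. For (2), the key is the algebraic identity
\[
|A|^4-H\tr(A^3)=-\lambda_1\lambda_2(\lambda_1-\lambda_2)^2=-2\lambda_1\lambda_2|\mathring A|^2,
\]
which reduces the estimate to bounding $4f\lambda_1\lambda_2/H^3$. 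Substituting the expansions from (1) shows this prefactor is $\tfrac14+o(1)$, and therefore $\geq\tfrac18$ once $\sigma$ is large enough.

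Parts (3)--(5) share the same ``traceless contraction'' trick. For (4)--(5), the prefactor
\[
h_{ij}-\frac{|A|^2}{H}g_{ij}=\mathring h_{ij}-\frac{|\mathring A|^2}{H}g_{ij}
\]
is manifestly of size $|\mathring A|$. In (4), the symmetry $\bar R_{jkil}=\bar R_{iljk}$ kills the $\tfrac H2 g$-part of $h_{jm}\bar R_{mkil}-h_{lm}\bar R_{imjk}$, so only $\mathring h_{jm}\bar R_{mkil}-\mathring h_{lm}\bar R_{imjk}=O(|\mathring A|\,C_0\sigma^{-3})$ survives; together with $F^{kl}=O(1)$ and the $O(|\mathring A|)$ prefactor this gives the claimed $cC_0|\mathring A|^2\sigma^{-3}$. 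In (3), $\mathring h_{ij}$ is traceless in $i,j$, so the trace part $\tfrac12\bar{R}ic(\nu,\nu)g_{ij}$ of $\bar R_{i3j3}$ drops out; combined with the fact that on coordinate spheres of the exact Schwarzschild metric $\bar R_{i3j3}$ is a multiple of $g_{ij}$, the surviving traceless-in-$(i,j)$ piece inherits an extra factor of $\sigma^{-1}$ from the perturbation $P_{\alpha\beta}$ and from the misalignment of $\nu$ with the radial direction that is controlled by Proposition \ref{2.1}. Part (5) is entirely analogous, using $|\bar\nabla\bar R|\leq cC_0\sigma^{-4}$ together with the corresponding tracelessness cancellations at the level of the first derivative of $\bar R$.

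For (6), I would first compute $F^{kl,pq}$ directly as the Hessian of $F$ in its matrix argument; since $F$ is a symmetric function of the two principal curvatures, a short calculation yields $|F^{kl,pq}|\leq c/H\leq c\sigma$. With the $O(|\mathring A|)$ prefactor, the problem reduces to controlling $|\nabla h|^2$. I would decompose $\nabla h=\nabla\mathring A+\tfrac12\nabla H\otimes g$, use the Codazzi equation to bound $|\nabla H|$ by $|\nabla\mathring A|+O(C_0\sigma^{-3})$, and invoke Proposition \ref{2.1}, applied to the Euclidean derivative $|\nabla^e A^e|\leq c(B_3+C_0)\sigma^{-4}$, to convert to an intrinsic bound $|\nabla h|^2\leq c(C_0^2+B_1^2+B_3^2)\sigma^{-8}$; multiplication then yields $c(C_0^2+B_1^2+B_3^2)|\mathring A|\sigma^{-7}$. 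The main obstacle I anticipate is this last step: the $B_1^2$ contribution, which nominally only measures the position of $\Sigma_t$, enters the gradient bound only after carefully translating Proposition \ref{2.1} back into the ambient metric $\bar g$ and accounting uniformly for the perturbation $P_{\alpha\beta}$, which is what generates precisely the $B_1^2$ term on the right-hand side.
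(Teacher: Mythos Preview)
Your overall plan is sound and matches the paper's for items (1), (2), and (6). The paper's treatment of (3)--(5), however, is different and more direct than what you outline, and the difference matters for (5).

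For (3), (4), and (5) the paper works in the principal curvature frame and invokes the three--dimensional identity \eqref{for1} expressing $\bar R_{\alpha\beta\gamma\delta}$ in terms of $\bar Ric$ and $\bar R$. This collapses each contraction to an explicit scalar expression: in (3) one obtains $\mathring h_{ij}\bar R_{i3j3}=\tfrac12(\lambda_1-\lambda_2)(\bar R_{11}-\bar R_{22})$; in (4), $\tfrac{(\lambda_1-\lambda_2)^2\lambda_1\lambda_2}{H^2}\bar R_{1212}$; in (5), $\tfrac{(\lambda_2-\lambda_1)\lambda_1\lambda_2}{H^2}(\bar\nabla_2\bar R_{32}-\bar\nabla_1\bar R_{31})$. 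The extra $\sigma^{-1}$ in (3) and (5) then comes for free: the displayed quantities involve only \emph{Ricci} components (and their derivatives), and the specific combinations $\bar R_{11}-\bar R_{22}$, $\bar\nabla_2\bar R_{32}-\bar\nabla_1\bar R_{31}$ vanish at leading order for the rotationally symmetric Schwarzschild part, so what survives is of size $cC_0\sigma^{-4}$ (resp.\ $cC_0\sigma^{-5}$) from the perturbation $P$, via Proposition~\ref{2.1}. Your abstract ``traceless contraction'' argument is correct for (4) (the symmetry $\bar R_{jkil}=\bar R_{iljk}$ indeed kills the $\tfrac H2 g$ part) and morally correct for (3), but in (5) your appeal to ``$|\bar\nabla\bar R|\leq cC_0\sigma^{-4}$ together with the corresponding tracelessness cancellations'' does not by itself yield $\sigma^{-5}$: the naive bound gives only $|\mathring A|\sigma^{-4}$, and you would still need to identify the precise cancellation that upgrades this. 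The principal--frame computation via \eqref{for1} does this in one line.

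For (6) your detour through Proposition~\ref{2.1} and the Euclidean second fundamental form is unnecessary. The paper uses Lemma~\ref{lem1.4} directly: $|\nabla A|^2\leq 5|\nabla\mathring A|^2+4|\omega|^2\leq c(C_0^2+B_1^2+B_3^2)\sigma^{-8}$, together with $|F^{kl,pq}|\leq cC_0\sigma$ and the $O(|\mathring A|)$ prefactor. This avoids the translation back and forth between $\bar g$ and the Euclidean metric that you flag as the ``main obstacle''.
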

\begin{proof}
    The first inequality is from Proposition \ref{2.2}, \ref{3.4}. To see the second inequality, observe that 
    \[
    |A|^4-H \tr(A^3)=-\lambda_1\lambda_2(\lambda_1-\lambda_2)^2=-2\lambda_1\lambda_2|\mathring{A} |^2,
    \]
    which implies that
    \[ \frac{2f}{H^3}(|A|^4-H tr(A^3))=(\frac{\sigma^2}{8}+c\sigma)(-2\lambda_1\lambda_2|\mathring{A} |^2)\leq -\frac{1}{8}|\mathring{A}|^2. \] 
    Now recall that the Riemann curvature tensor on a three-dimensional manifold can be expressed by the Ricci curvature:
    \bel{for1}
    \bar R_{\alpha\beta\gamma\delta}=(\bar R_{\alpha\delta}\bar g_{\beta\gamma}-\bar R_{\alpha\gamma}\bar g_{\beta\delta}-\bar R_{\beta\delta}\bar g_{\alpha\gamma}+\bar R_{\beta\gamma}\bar g_{\alpha\delta})+\frac{1}{2}\bar R(\bar g_{\alpha\gamma}\bar g_{\beta\delta}-\bar g_{\alpha\delta}\bar g_{\beta\gamma}).
    \qe
    It follows that
$$
\mathring{h}_{ij}\bar{R}_{i3j3}=\frac{1}{2}(\lambda_{1}-\lambda_{2})(\bar{R}_{11}-\bar{R}_{22}).
$$
But from Proposition \ref{2.1}, we get $|\bar{R}_{11}-\bar{R}_{22}|\leq cC_{0}\sigma^{-4}$ for $\sigma\geq c(C_0+B_1+B_2+B_3)$ as desired. 
The fourth estimate follows from the following
\begin{align*}
    F^{kl}\left({h_{ij}-\f{\abs{A}^2}{H}g_{ij}}\right)(h_{jm}\bar{R}_{mkil}-h_{lm}\bar{R}_{imjk})
    =\f{(\lambda_1-\lambda_2)^2\lambda_1\lambda_2}{H^2}\bar{R}_{1212}.
\end{align*}
Finally using \eqref{for1} again, we obtain
\begin{align*}
    F^{kl}\left({h_{ij}-\f{\abs{A}^2}{H}g_{ij}}\right)(\bar{\nabla}_{l}\bar{R}_{3jki}+\bar{\nabla}_{i}\bar{R}_{3klj})=\f{(\lambda_2-\lambda_1)\lambda_1\lambda_2}{H^2}(\bar{\n}_2\bar{R}_{32}-\bar{\n}_1\bar{R}_{31}).
\end{align*}
To see the last inequality, we calculate as follows,
\begin{align*}
    &F^{kl,pq}\left({h_{ij}-\f{\abs{A}^2}{H}g_{ij}}\right)\n_ih_{kl}\n_jh_{pq}\\
    = & \f{\lambda_2(\lambda_1-\lambda_2)}HF^{kl,pq}\n_1h_{kl}\n_1h_{pq} +\f{\lambda_1(\lambda_2-\lambda_1)}HF^{kl,pq}\n_2h_{kl}\n_2h_{pq}.
\end{align*}
Note that $|F^{kl,pq}|\leq cC_0\sigma$ and from Lemma \ref{lem1.4}, 
\[|\nabla A|^2\leq 5|\n \mathring{A}|^2+4|\omega|^2\leq c(C_0^2+B_1^2+B_3^2)\sigma^{-8},\]
which implies the last inequality.
\end{proof}

We are now ready to prove an a priori estimate for the difference between
the principal curvatures.
\begin{prop}\label{prop1.8}
Suppose that the solution $\Sigma_t$ is contained in $\mathcal{B}_\sigma(B_1,B_2,B_3)$ for $t\in[0,T]$. Then there is an absolute constant c such that for $\sigma\geq c(C_0^2+B_1^2+B_2+B_3)$ we have the estimate
$$
|\mathring{A}|^2\leq cC_0^2\sigma^{-6}
$$
everywhere in $[0,T]$.
\end{prop}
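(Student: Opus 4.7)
The plan is to apply the parabolic maximum principle to the scalar $\ma$ itself, using the evolution equation of Lemma~\ref{lem1.5}(5) and the algebraic identities of Lemma~\ref{lem1.7} to distill a coercive negative reaction of order $-\sigma^{-2}\ma$ which dominates every remaining term in the evolution. The initial value, coming from the fact that $\Sigma_0=S_\sigma(0)$ is umbilic for the conformally flat Schwarzschild background $(1+m/2r)^4\delta$ and only deformed by the perturbation $P_{\alpha\beta}$, will already be of the desired size $cC_0^2\sigma^{-6}$, and Gronwall's lemma preserves this size uniformly on $[0,T]$.

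The key observation is that the two reaction terms $-2fH\ma+2F^{kl}h_{mk}h_{ml}\ma$ in Lemma~\ref{lem1.5}(5) conspire to give a good sign. A short computation in the diagonal frame for $h$, using $F^{kl}=\mathrm{diag}(\lambda_2^2/H^2,\lambda_1^2/H^2)$ and $F=\lambda_1\lambda_2/H$, shows
\[
-2FH+2F^{kl}h_{mk}h_{ml}=-\f{2\lambda_1\lambda_2\abs{A}^2}{H^2}\le-c_1\sigma^{-2},
\]
where the last inequality uses the expansion $\lambda_i=\sigma^{-1}+O(\sigma^{-2})$ from Proposition~\ref{2.2}. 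The mismatch $2(f-F)H$ is of lower order $O((C_0^2+B_2+B_3)\sigma^{-3})$ by Lemma~\ref{lem1.7}(1), hence absorbable once $\sigma\ge c(C_0^2+B_2+B_3)$. The Bochner-type term $-2F^{kl}\mathring{h}_{ij,k}\mathring{h}_{ij,l}$ has the right sign by ellipticity of $\mathcal{L}$ and is discarded.

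The remaining curvature-coupled terms contract $F^{kl}$ or $F^{kl,pq}$ and $\mathring h_{ij}$ against a Riemann, derivative-Riemann, or $\n h\otimes\n h$ tensor. The term $2f\mathring h_{ij}\bar R_{3i3j}$ is handled directly by Lemma~\ref{lem1.7}(3); for the other three, I would split $\mathring h_{ij}=(h_{ij}-\f{\abs{A}^2}{H}g_{ij})+\f{\ma}{H}g_{ij}$, controlling the first piece by Lemma~\ref{lem1.7}(4)--(6) and reducing the ``trace'' piece $\f{\ma}{H}g^{ij}(\cdots)$ to a Ricci contraction via the 3D identity \eqref{for1} together with the crude bounds $\abs{F^{kl}}\lesssim 1$ and $\abs{F^{kl,pq}}\lesssim\sigma$. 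Applying Young's inequality converts each contribution to the form $\eps\sigma^{-2}\ma+cC_0^2\sigma^{-8}$, giving the pointwise inequality
\[
\pa_t\ma\le\mathcal{L}\ma-c\sigma^{-2}\ma+cC_0^2\sigma^{-8}
\]
for $\sigma\ge c(C_0^2+B_1^2+B_2+B_3)$. At a spatial maximum of $\ma$ we have $\mathcal{L}\ma\le 0$, so Gronwall's lemma produces $\max_{\Sigma_t}\ma\le e^{-c\sigma^{-2}t}\max_{\Sigma_0}\ma+cC_0^2\sigma^{-6}$, and a direct computation from the asymptotic form of $\bar g$ on $S_\sigma(0)$ bounds $\max_{\Sigma_0}\ma$ by $cC_0^2\sigma^{-6}$, completing the proof.

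The main obstacle is the bookkeeping of the trace contribution: the error $\f{\ma}{H}g_{ij}\cdot(\mathrm{curv})$ generically has size $cC_0\sigma^{-3}\ma$, only a factor $C_0/\sigma$ smaller than the good reaction, so the smallness requirement $\sigma\ge cC_0$ is essential and one must repeatedly convert Riemann contractions to Ricci ones via \eqref{for1}. The $F^{kl,pq}$ term further forces the use of the sharp gradient bound $\abs{\n A}^2\le c(C_0^2+B_1^2+B_3^2)\sigma^{-8}$ from Lemma~\ref{lem1.4}, which in turn relies on the $\mathcal{B}_\sigma$-hypothesis $\abs{\n\mathring A}\le B_3\sigma^{-4}$ and the asymptotic Schwarzschild structure of the Ricci term $w_i=\bar R_{3\ell i\ell}$.
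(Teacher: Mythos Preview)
Your proposal is correct but takes a different route from the paper. The paper applies the maximum principle to the \emph{quotient} $e=\ma/H^2$ rather than to $\ma$ itself, following Huisken's classical trick. In that quotient the reaction term organizes into $\frac{2f}{H^3}(|A|^4-H\tr A^3)$, and Lemma~\ref{lem1.7}(2) yields the clean coercive bound $-\frac18\ma$ with no explicit $\sigma$-dependence; moreover, the curvature-coupled terms automatically appear contracted against $(h_{ij}-\frac{|A|^2}{H}g_{ij})$ so that Lemma~\ref{lem1.7}(4)--(6) apply without any splitting. You instead extract the coercive reaction directly from $-2fH+2F^{kl}h_{mk}h_{ml}=-\frac{2\lambda_1\lambda_2|A|^2}{H^2}+2(F-f)H\sim-c\sigma^{-2}$ and then split $\ch=(h_{ij}-\frac{|A|^2}{H}g_{ij})+\frac{\ma}{H}g_{ij}$ by hand to invoke Lemma~\ref{lem1.7}. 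Both methods close; yours is conceptually more elementary (no quotient, no extra gradient cross-term $\frac2H F^{kl}H_l\n_k e$), at the price of a reaction that carries an explicit $\sigma^{-2}$ and thus demands slightly more care in absorbing the trace pieces $\frac{\ma}{H}g_{ij}\cdot(\text{curv})$ and the $F^{kl,pq}$ term. In effect, the paper reaches the same differential inequality you derive---it is exactly what appears at the start of the proof of Proposition~\ref{3.10}---but chooses to deploy it only later for the gradient estimate, preferring the quotient for the zero-order bound.
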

\begin{proof}
 Consider the function $e=\f{|\mathring{A}|^2}{H^2}.$ Computing as in \cite{Huisken1987TheVP} we obtain from Lemma \ref{lem1.5} (2) (5) that
$$
\begin{aligned}
\frac{\partial }{\partial t}e=&\mathcal{L} e+\f2HF^{kl}\n_{l}H\n_ke-\frac{2}{H^{4}}|\nabla_{i}Hh_{kl}-\nabla_{i}h_{kl}H|_F^{2}+\red{\f{F^{kl}\mathring{h}_{kl}}{H}|\mathring{A}|^2}\\
&+\frac{2f}{H}e\bar{R}ic(\nu,\nu)+\frac{2f}{H^{3}}\big(|A|^{4}-H\:tr(A^{3})\big)+\frac{2f}{H^2}\bar{R}_{3i3j}\ch\\
& -\frac{4F^{kl}}{H^2}\left({h_{ij}-\f{\abs{A}^2}{H}g_{ij}}\right)(h_{jm}\bar{R}_{mkli}-h_{km}\bar{R}_{mjil})\\
&-\f{2F^{kl}}{H^2}\left({h_{ij}-\f{\abs{A}^2}{H}g_{ij}}\right)(\bar{\nabla}_{l}\bar{R}_{3jki}+\bar{\nabla}_{i}\bar{R}_{3klj})\\
& +\f2{H^2}\left(h_{ij}-\f{\abs{A}^2}{H}g_{ij}\right)(F^{kl,pq}\n_ih_{kl}\n_jh_{pq}),
\end{aligned}
$$
where 
\[
|\nabla_{i}Hh_{kl}-\nabla_{i}h_{kl}H|_F^2=F^{kl}\(h_{ij,k}h_{ij,l}H^2+H_kH_l\abs{A}^2-2h_{ij,k}h_{ij}H_lH\).
\]
Applying Lemma \ref{lem1.7}, and noting that 
$\bar{R}ic(\nu, \nu)$ is negative, we conclude that 
\[
\frac{d}{dt}e\leq \mathcal{L} e+\f2HF^{kl}\n_{l}H\n_ke-\f18\ma+cC_0|\mathring{A}|\sigma^{-3}.
\]
provided that $\sigma\geq c\left(C_0^2+B_1^2+B_2+B_3\right)$ is so large that Propositions \ref{2.1}, \ref{2.2}, \ref{3.4}, \ref{3.5} and Lemma \ref{lem1.7} all apply. Now suppose that $f$ reaches a value $D\sigma^{-4}$ for the first time at $(y_0,t_0)$, where $D$ is some positive number larger than the initial values of $e\sigma^4.$ Then at $(y_0,t_0)$ we have $(d/dt)e\geq0,\mathcal{L} e\leq0$ and $\nabla e=0$
such that
$$
0\leq-\frac18|\mathring{A}|^2+cC_0|\mathring{A}|\sigma^{-3}\quad\text{ at }(y_0,t_0).
$$
Since at that point $e=|\mathring{A}|^2/H^2=D\sigma^{-4}$ and since $\sigma^{-1}\leq H\leq3\sigma^{-1}$ we get
$$0\leq-\frac18D\sigma^{-6}+cC_0D^{1/2}\sigma^{-6},$$
which is a contradiction for $D\geq cC_0^2.$ Hence $e\leq cC_0^2\sigma^{-4}$, completing the proof of the proposition.
\end{proof}

 We write $A*B$ for any linear combination of contractions of $A$ and $B$ with the metric $g_{ij}.$ 
 
\begin{lem}\label{na}
    If $\Sigma_t$ is a solution of \eqref{flow} contained in $\mathcal{B}_\sigma(B_1, B_2,B_3)$, then
there is an absolute constant $c_1$ such that
\[
\pd t|\n\mathring{A}|^2 \leq\mathcal{L}|\n\mathring{A}|^2+c_1|\nabla\mathring{A}\mid^2\sigma^{-2}+c_1C_0|\nabla\mathring{A}\mid\sigma^{-6},
\]
provided $\sigma\geq c_1 (C_0^2+B_1^2+B_2)$.
\end{lem}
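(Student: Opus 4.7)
My plan is to follow the template of Proposition \ref{prop1.8}: derive an evolution inequality for $|\nabla\mathring{A}|^2$, identify its dominant elliptic piece $\mathcal{L}|\nabla\mathring{A}|^2$, and bound the remaining terms using Lemma \ref{lem1.7}, Proposition \ref{prop1.8}, and the defining inequalities of $\mathcal{B}_\sigma(B_1,B_2,B_3)$.

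First I apply $\nabla_m$ to the evolution equation for $\mathring{h}_{ij}$ in Lemma \ref{lem1.5}(3), commuting $\nabla_m$ with $\partial_t$ and with $\mathcal{L}=F^{kl}\nabla_k\nabla_l$. The bracket $[\nabla_m,\partial_t]$ produces a correction of the form $(\partial_t\Gamma)*\mathring{h}$; since $\partial_t g_{ij}=2(f-F)h_{ij}$, this is schematically $(\nabla h*(f-F)+h*\nabla F)*\mathring{h}$. The bracket $[\nabla_m,\mathcal{L}]$ produces two pieces: a Riemann-commutator contribution $F^{kl}\bar R*\nabla\mathring{h}$ coming from $[\nabla_m,\nabla_k\nabla_l]$, and a fully nonlinear correction $F^{kl,pq}\nabla_m h_{pq}\nabla_k\nabla_l\mathring{h}_{ij}$ arising from $\nabla_m F^{kl}$. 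Substituting into $\partial_t|\nabla\mathring{A}|^2=2g^{mn}g^{ip}g^{jq}\nabla_m\mathring{h}_{ij}\partial_t\nabla_n\mathring{h}_{pq}+(\partial_t g^{-1})*\nabla\mathring{h}*\nabla\mathring{h}$, the principal piece reorganizes into $\mathcal{L}|\nabla\mathring{A}|^2-2F^{kl}g^{mn}\nabla_k\nabla_m\mathring{h}_{ij}\nabla_l\nabla_n\mathring{h}^{ij}$; the last term is non-positive by positivity of $F^{kl}$ for $\sigma$ large, hence it is dropped for the upper bound.

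Next I classify the remainders by size. Quadratic-in-$\nabla\mathring{A}$ contributions come from the $(2F-f)h_{im}h_{mj}$ coefficient obtained by differentiating the quadratic-in-$\mathring{h}$ terms, from the scalar $F^{kl}h_{mk}h_{ml}$, from the metric-evolution piece $(\partial_t g^{-1})*\nabla\mathring{h}*\nabla\mathring{h}$, and from the Riemann commutator $F^{kl}\bar R*\nabla\mathring{h}*\nabla\mathring{h}$; each is bounded by $c|\nabla\mathring{A}|^2\sigma^{-2}$ using $|h|\sim\sigma^{-1}$, $|\bar R|\leq cC_0\sigma^{-3}$, and $|f-F|\leq c(C_0^2+B_2+B_3)\sigma^{-2}$ from Lemma \ref{lem1.7}. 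Linear-in-$\nabla\mathring{A}$ contributions arise from pairing $\nabla\mathring{h}$ with $\nabla_m$ of the ambient-curvature source terms in Lemma \ref{lem1.5}(3); the key sizes are $|\bar\nabla^l\bar R|\leq cC_0\sigma^{-3-l}$ for $l\leq 3$, and whenever an undifferentiated $\mathring{A}$ remains alongside a curvature factor it is improved to $|\mathring{A}|\leq cC_0\sigma^{-3}$ via Proposition \ref{prop1.8}. All such pairings collect into $cC_0|\nabla\mathring{A}|\sigma^{-6}$.

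The main obstacle is bookkeeping: the evolution equation for $\mathring{h}_{ij}$ is already long, and each $\nabla_m$ landing on $F^{kl}$ or $F^{kl,pq}$ generates an additional $\nabla h$ factor, so careful tracking is needed to sort which residuals enter with size $\sigma^{-2}|\nabla\mathring{A}|^2$ and which with size $C_0\sigma^{-6}|\nabla\mathring{A}|$. At a few points I would also use Lemma \ref{lem1.4} at the $\nabla$-level to replace a stray $|\nabla H|$ by $|\nabla\mathring{A}|$ plus a curvature remainder: combining $|\nabla A|^2\geq(\tfrac{3}{4}-\eta)|\nabla H|^2-(\tfrac{1}{4\eta}-1)|w|^2$ with $|w|\leq cC_0\sigma^{-3}$ gives $|\nabla H|^2\leq c(|\nabla\mathring{A}|^2+C_0^2\sigma^{-6})$, which is exactly what is needed to convert any $|\nabla A|$ factor into an absorbable quantity. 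Once every residual is classified and absorbed the stated inequality follows for $\sigma\geq c_1(C_0^2+B_1^2+B_2)$.
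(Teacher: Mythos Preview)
Your overall strategy matches the paper's, but there is one genuine gap: you discard the good second-order term too early and then have no mechanism for the residuals that carry $\nabla^2\mathring{A}$. Concretely, after commuting $\nabla_m$ through $\mathcal{L}$ and through the $F^{kl,pq}\nabla_i h_{pq}\nabla_j h_{kl}$ source, you obtain (as you yourself note in the first paragraph) the contribution $F^{kl,pq}\nabla_m h_{pq}\,\nabla_k\nabla_l\mathring{h}_{ij}$, and in addition, when $\nabla_m$ lands on one of the existing $\nabla h$ factors, a contribution of the form $F^{kl,pq}\nabla_m\nabla_i h_{pq}\,\nabla_j h_{kl}$. Paired with $\nabla_m\mathring{h}_{ij}$ these are of schematic size
\[
|d^2F|\,|\nabla A|\,|\nabla\mathring{A}|\,|\nabla^2\mathring{A}|\ \sim\ c\,\sigma^{-3}|\nabla\mathring{A}|\,|\nabla^2\mathring{A}|,
\]
which does \emph{not} fit into either of your two bins $\sigma^{-2}|\nabla\mathring{A}|^2$ or $C_0\sigma^{-6}|\nabla\mathring{A}|$, since at this stage no bound on $|\nabla^2\mathring{A}|$ is available.

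The paper handles exactly these terms by \emph{retaining} the nonpositive term $-2F^{pq}\mathring{h}_{ij,kp}\mathring{h}_{ij,kq}$ and absorbing the second-derivative residuals into it via Young's inequality:
\[
2\nabla_k\mathring{h}_{ij}\,\nabla_k F^{pq}\,\mathring{h}_{ij,pq}\ \leq\ \eps|\nabla^2\mathring{A}|^2+\tfrac{1}{\eps}|d^2F|^2|\nabla\mathring{A}|^2|\nabla A|^2,
\]
and similarly for $4\nabla_k\mathring{h}_{ij}\,F^{st,pq}\,\nabla_k\nabla_i h_{st}\,\nabla_j h_{pq}$ (using Lemma \ref{lem1.4} at the second-derivative level to pass from $|\nabla^2 A|$ to $|\nabla^2\mathring{A}|+|\nabla w|$). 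Only after this absorption may the remaining part of $-2F^{pq}\mathring{h}_{ij,kp}\mathring{h}_{ij,kq}$ be dropped. Once you keep the good term and perform this Young-inequality step, the rest of your classification goes through; but as written, dropping it ``for the upper bound'' leaves the $\nabla^2\mathring{A}$ residuals uncontrolled.
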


\begin{proof}
    Since $(d/dt)g_{ij}=2(f-F)h_{ij}$, the time derivative of the Christoffel
symbols is of the form $A*\nabla A$ and thus
$$
\begin{aligned}\frac{\partial}{\partial t}|\nabla\mathring{A}\:|^{2}
&=2\left(\frac{\partial}{\partial t}\nabla_k\ch\right)\nabla_k\ch\\
&=2\nabla_k\left(\frac{\partial}{\partial t}\ch\right)\nabla_k\ch+\nabla A*\nabla\mathring{A}\:*\mathring{A}\:*A.\end{aligned}
$$
From Lemma \ref{lem1.5} (3), we have
\begin{align*}
\frac{\partial}{\partial t}\ch =&  \mathcal{L} \ch+\mathring{A} *A*A+\mathring{A} *\bar{R}ic+ f(\bar{R}_{3i3j}+\frac{1}{2}\bar{R}ic(\nu,\nu))g_{ij})\\
& -F^{kl}\left(\bar{\nabla}_l\bar{R}_{3jki}+\bar{\nabla}_i\bar{R}_{3klj}-\frac{\bar{\nabla}_l\bar{R}_{3mkm}+\bar{\nabla}^m\bar{R}_{3klm}}2g_{ij}\right) \\
&+F^{kl,pq}\left(\nabla_ih_{pq}\nabla_jh_{kl}-\frac{\nabla_mh_{pq}\nabla^mh_{kl}}2g_{ij}\right).
\end{align*}
Hence we obtain 
\begin{align*}
    &\pd t|\n\mathring{A}|^2\\ =&2\nabla_k\ch\nabla_k\mathcal{L}\ch+\nabla\mathring{A}*\nabla A*\mathring{A} *A+\nabla\mathring{A} *\nabla\mathring{A} *A*A\\
    &+\nabla\mathring{A} *\nabla\mathring{A} *\bar{R}ic+\nabla\mathring{A}*\mathring{A} *\bar{\nabla}\bar{R}ic+\nabla\mathring{A} *\mathring{A}*A*\bar{R}ic\\
    & + 2f\nabla_k\ch\nabla_k\big(\bar{R}_{3i3j}+\frac{1}{2}\bar{R}ic(\nu,\nu)g_{ij}\big)\\
    & - 2\n_k\ch\left(\n_kF^{pq}(\bar{\n}_q\bar{R}_{3jpi}+\bar{\n}_i\bar{R}_{3pqj})+F^{pq}\n_k(\bar{\n}_q\bar{R}_{3jpi}+\bar{\n}_i\bar{R}_{3pqj})\right)\\
    & + 2\n_k\ch\left(\n_kF^{st,pq}\n_ih_{pq}\n_jh_{st}+2F^{st,pq}\n_k\n_ih_{st}\n_jh_{pq}\right).
\end{align*}
Now observe that 
\[
\begin{aligned}
&2\nabla_{k}(\mathcal{L}\ch)\nabla_{k}\ch\\
 =&2\mathcal{L}(\nabla_{k}\ch)\nabla_{k}\ch+2\n_k\ch\n_kF^{pq}\mathring{h}_{ij,pq}+\text{curvature terms} \\
=&\mathcal{L}|\n\mathring{A}|^2-2F^{pq}\mathring{h}_{ij,kp}\mathring{h}_{ij,kq}+2\n_k\ch\n_kF^{pq}\mathring{h}_{ij,pq}+\nabla\mathring{A} *\nabla\mathring{A} *\bar{R}ic \\
&+\nabla\mathring{A} *\mathring{A} *\bar{\nabla}\bar{R}ic+\nabla\mathring{A} *\mathring{A} *A*\bar{R}ic.
\end{aligned}
\]
Consequently, we get
\begin{align*}
    &\pd t|\n\mathring{A}|^2\\
    =&\mathcal{L}|\n\mathring{A}|^2-2F^{pq}\mathring{h}_{ij,kp}\mathring{h}_{ij,kq}+2\n_k\ch\n_kF^{pq}\mathring{h}_{ij,pq}+\nabla\mathring{A}*\nabla A*\mathring{A} *A\\
    &+\nabla\mathring{A} *\nabla\mathring{A} *A*A
    +\nabla\mathring{A} *\nabla\mathring{A} *\bar{R}ic+\nabla\mathring{A}*\mathring{A} *\bar{\nabla}\bar{R}ic\\
    & +\nabla\mathring{A} *\mathring{A}*A*\bar{R}ic
     + 2f\nabla_k\ch\nabla_k\big(\bar{R}_{3i3j}+\frac{1}{2}\bar{R}ic(\nu,\nu)g_{ij}\big)\\
    & - 2\n_k\ch\left(\n_kF^{pq}(\bar{\n}_q\bar{R}_{3jpi}+\bar{\n}_i\bar{R}_{3pqj})+F^{pq}\n_k(\bar{\n}_q\bar{R}_{3jpi}+\bar{\n}_i\bar{R}_{3pqj})\right)\\
    & + 2\n_k\ch\left(\n_kF^{st,pq}\n_ih_{pq}\n_jh_{st}+2F^{st,pq}\n_k\n_ih_{st}\n_jh_{pq}\right).
\end{align*}
To archive the desired estimate, we need the following lemma. 
\begin{lem}\label{lem1.9}
    Let $\Sigma_t$ be a smooth solution of \eqref{flow} contained in $\mathcal{B}_\sigma(B_1, B_2, B_3)$ for $t\in[0, T]$. Then there is an absolute constant $c$ such that  
    \[
    |d^kF|\leq c C_0^2\sigma^{k-1},
    \]
    for $\sigma\geq c(C_0^2+B^2_1+B_2)$. Here $d^kF$ is the k-th derivatives with respect to second fundamental form. 
\end{lem}
With this lemma in hand, and also observe that in view of Lemma \ref{lem1.4}, 
\begin{align*}
|\nabla A|^{2}=|\nabla\mathring{A} |^{2}+\frac{1}{2}|\nabla H|^{2}
&\leq5|\nabla\mathring{A} |^{2}+4\sum_{i}|\bar{R}ic(\nu,e_{i})|^{2}\\
&\leq5|\nabla\mathring{A} |^{2}+c (C_{0}^{2}+B_{1}^{2})\sigma^{-8}.
\end{align*}
Assuming then that $\sigma\geq c\left(C_{0}^{2}+B_{1}^{2}+B_{2}+B_{3}\right)$ is so large that Lemma \ref{lem1.7} applies, we get
$$
\begin{aligned}
&|\nabla A*\nabla\mathring{A}\:*A*\mathring{A}\:|\leq cB_{2}(C_{0}+B_{1})|\nabla\mathring{A}\:|\sigma^{-8},\\&|\nabla\mathring{A}\:*\mathring{A}\:*A*\bar{R}ic|\leq cC_{0}B_{2}|\nabla\mathring{A}\:|\sigma^{-7},\\&|\nabla\mathring{A}\:*\nabla\mathring{A}\:*A*A|\leq c|\nabla\mathring{A}\:|^{2}\sigma^{-2},\\&|\nabla\mathring{A}\:*\nabla\mathring{A}\:*\bar{R}ic|\leq cC_{0}|\nabla\mathring{A}\:|^{2}\sigma^{-3},\\&|\nabla\mathring{A}\:*\mathring{A}\:*\bar{\nabla}\bar{R}ic|\leq cC_{0}B_{2}|\n\mathring{A}\:|\sigma^{-7}.
\end{aligned}
$$
Furthermore, we can argue exactly as in the proof of Lemma \ref{lem1.7} to obtain the estimates
$$
\begin{vmatrix}
2f\nabla_k\ch\nabla_k(\bar{R}_{3i3j}+\frac{1}{2}\bar{R}ic(v,v)g_{ij})
\end{vmatrix}
\leq cC_0|\nabla\mathring{A}\:|\sigma^{-6},
$$
$$
\begin{vmatrix}
2\nabla_k\ch F^{pq}\nabla_k(\bar{\nabla}_q\bar{R}_{3jpi}+\bar{\nabla}_iR_{3pqj})
\end{vmatrix}
\leq
cC_0|\nabla\mathring{A}\:|\sigma^{-6}.
$$
When combined with Lemma \ref{lem1.9}, we can also obtain 
\[
|2\n_k\ch\n_kF^{pq}(\bar{\n}_q\bar{R}_{3jpi}+\bar{\n}_i\bar{R}_{3pqj})|
\leq c(C_0+B_1)|\n\mathring{A}|\sigma^{-7}.
\]
To deal with the 2nd derivatives, we observe that 
\begin{align*}
    2\n_k\ch\n_kF^{pq}\mathring{h}_{ij,pq} & \leq 2|d^2F||\n\mathring{A}||\n A||\n^2\mathring{A}|\\
    & \leq \eps|\n^2\mathring{A}|^2+\f1\eps|d^2F|^2|\n\mathring{A}|^2|\n A|^2,
\end{align*}
and
\begin{align*}
    2\n_k\ch\n_kF^{st,pq}\n_ih_{pq}\n_jh_{st} & \leq 
    |d^3F||\n\mathring{A}||\n A|^3\\
    & \leq cC_0^2|\n\mathring{A}|\sigma^{-10},
\end{align*}
and finally in view of Lemma \ref{lem1.4}
\begin{align*}
    4\n_k\ch F^{st,pq}\n_k\n_ih_{st}\n_jh_{pq} & \leq 4|\n\mathring{A}||d^2F||\n^2A||\n A|\\
    & \leq cC_0^2|\n\mathring{A}|(|\n^2\mathring{A}|+\abs{\n w})\sigma^{-3}\\
    & \leq \eps|\n^2\mathring{A}|^2+\f c\eps C_0^2|\n\mathring{A}|^2\sigma^{-6}.
\end{align*}

We can then derive the differential inequality for $|\n\mathring{A}|^2$, 
\begin{align*}
     \pd t|\n\mathring{A}|^2 \leq\mathcal{L}|\n\mathring{A}|^2+c|\nabla\mathring{A}\mid^2\sigma^{-2}+cC_0|\nabla\mathring{A}\mid\sigma^{-6}.
\end{align*}
\end{proof}

\begin{prop}\label{3.10}
    Suppose that the solution $\Sigma_t$ of \eqref{flow} is contained in $\mathcal{B}_\sigma(B_1,B_2, B_3)$ for $t\in[0,T]$. Then there is an absolute constant c such that for $\sigma \geq c( C_0^2+B_1^2+B_2+ B_3)$ the estimate
$$
|\nabla\mathring{A}\mid^2\leqq cC_0^2\sigma^{-8}
$$
holds everywhere in $[0,T]$.
\end{prop}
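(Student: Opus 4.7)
My plan is a first-hit maximum-principle argument parallel to that of Proposition \ref{prop1.8}. The obstacle with applying Lemma \ref{na} directly to $|\nabla \mathring{A}|^2$ is apparent: a hypothetical first-hit of $|\nabla \mathring{A}|^2 = D\sigma^{-8}$ at $(y_0,t_0)$ would yield only the inequality $0 \leq c_1 D \sigma^{-10} + c_1 C_0 \sqrt{D}\sigma^{-10}$, which is always satisfied, since both error terms in Lemma \ref{na} are positive and of exactly the same order in $\sigma$. No contradiction is available from $|\nabla \mathring{A}|^2$ alone.

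To produce the missing negative term, I propose to run the argument on the auxiliary function
\[
\phi \;=\; |\nabla \mathring{A}|^2 + K|\mathring{A}|^2 |A|^2,
\]
where $K$ is a large absolute constant. By Proposition \ref{prop1.8} together with Lemma \ref{lem1.7} (1), $|\mathring{A}|^2|A|^2 \leq cC_0^2 \sigma^{-8}$, so $\phi$ itself lives on the target scale $\sigma^{-8}$, and an upper bound of that order on $\phi$ immediately gives the conclusion via $|\nabla \mathring{A}|^2 \leq \phi$. The key input is the Simons-type term $-2F^{kl}\mathring{h}_{ij,k}\mathring{h}_{ij,l}$ in the evolution of $|\mathring{A}|^2$ from Lemma \ref{lem1.5} (5): because $F^{kl}$ is close to $\tfrac{1}{4}g^{kl}$ on the nearly umbilical $\Sigma_t$, this term is bounded above by $-c|\nabla \mathring{A}|^2$. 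Multiplied by the factor $K|A|^2 \sim K\sigma^{-2}$ coming from the product rule on $|\mathring{A}|^2|A|^2$, it produces a negative gradient contribution $\leq -cK\sigma^{-2}|\nabla \mathring{A}|^2$ which, for $K$ chosen large enough, dominates the bad $c_1\sigma^{-2}|\nabla \mathring{A}|^2$ in Lemma \ref{na}. After Young's inequality absorbs $c_1 C_0|\nabla \mathring{A}|\sigma^{-6}$ against the remaining negative gradient, the expected estimate takes the form
\[
(\partial_t - \mathcal{L})\phi \;\leq\; -c\sigma^{-2}|\nabla \mathring{A}|^2 + c' C_0^2 \sigma^{-10}.
\]
Running the first-hit argument at $\phi = D\sigma^{-8}$ then forces $|\nabla \mathring{A}|^2 \leq c''C_0^2\sigma^{-8}$ at the first-hit point, and hence $\phi \leq (c''+Kc)C_0^2\sigma^{-8}$ there, contradicting the assumption for $D$ sufficiently large relative to $C_0^2$.

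The hard part is curvature-error bookkeeping. The evolution of $|A|^2$ from Lemma \ref{lem1.5} (4), multiplied by $|\mathring{A}|^2$, together with the $|A|^2$ prefactor on the evolution of $|\mathring{A}|^2$, contribute curvature terms that are linear in $\mathring{A}$, of size $C_0|\mathring{A}|\sigma^{-6}$ (from $|A|^2 \cdot 2f\mathring{h}_{ij}\bar R_{3i3j}$ and analogous contractions). Using $|\mathring{A}|\leq cC_0\sigma^{-3}$ crudely yields $C_0^2\sigma^{-9}$, still above the $\sigma^{-10}$ target. The resolution, as in the proof of Proposition \ref{prop1.8}, is to pair each linear-in-$\mathring{A}$ error with the genuinely negative quadratic $-c\sigma^{-2}|\mathring{A}|^2$ arising from $-2fH + 2F^{kl}h_{mk}h_{ml}$ in the evolution of $|\mathring{A}|^2$ (scaled again by $|A|^2 \sim \sigma^{-2}$), via Young's inequality with carefully chosen weights, so that all pure residual errors live at $C_0^2\sigma^{-10}$. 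The second-derivative-of-$F$ contributions require extra care: Lemma \ref{lem1.9} gives $|d^2 F|\leq cC_0^2\sigma$, so the $F^{kl,pq}\nabla h \nabla h$ terms generate further $C_0^2\sigma^{-2}|\nabla \mathring{A}|^2$ pieces which must be absorbed into the leading $-c\sigma^{-2}|\nabla \mathring{A}|^2$ by taking $\sigma$ large in terms of $C_0$. This parallels the absorption step already performed at the end of the proof of Lemma \ref{na}.
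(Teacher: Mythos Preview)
Your strategy is correct and is essentially the same as the paper's: combine $|\nabla\mathring{A}|^2$ with a large multiple of $|\mathring{A}|^2$ so that the Simons-type term $-2F^{kl}\mathring{h}_{ij,k}\mathring{h}_{ij,l}\sim -\tfrac12|\nabla\mathring{A}|^2$ in the evolution of $|\mathring{A}|^2$ supplies the missing negative gradient contribution, then apply the parabolic maximum principle.

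The one noteworthy difference is the choice of auxiliary function. You take $\phi=|\nabla\mathring{A}|^2+K|\mathring{A}|^2|A|^2$, using the variable factor $|A|^2$ as a proxy for $\sigma^{-2}$. The paper instead takes the simpler combination
\[
|\nabla\mathring{A}|^2+C_1\sigma^{-2}|\mathring{A}|^2,
\]
with the \emph{constant} prefactor $\sigma^{-2}$. This buys a good deal of economy: there are no product-rule cross terms $F^{kl}\nabla_k|\mathring{A}|^2\nabla_l|A|^2$ to estimate, no evolution of $|A|^2$ to track, and the linear-in-$\mathring{A}$ error is handled in one line. Indeed, from Lemma~\ref{lem1.5}(5) and Lemma~\ref{lem1.7} the paper first records
\[
\partial_t|\mathring{A}|^2\leq \mathcal{L}|\mathring{A}|^2-\tfrac14|\nabla\mathring{A}|^2+cC_0|\mathring{A}|\sigma^{-5},
\]
and then simply invokes Proposition~\ref{prop1.8} to bound $cC_0|\mathring{A}|\sigma^{-5}\leq cC_0^2\sigma^{-8}$ directly; after multiplication by $C_1\sigma^{-2}$ this is already $cC_0^2\sigma^{-10}$, so the Young-inequality pairing against $-c\sigma^{-2}|\mathring{A}|^2$ that you describe is unnecessary. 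Your route works, but the paper's constant-prefactor choice sidesteps most of the ``hard bookkeeping'' you anticipate.
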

\begin{proof}
    From Lemma \ref{lem1.5} and the estimate of Lemma \ref{lem1.7}, we find that 
    \begin{align*}
    \pd t\ma  \leq & \mathcal{L}(\ma)-\f14|\n\mathring{A}|^2-2fH\ma+2F^{kl}h_{mk}h_{ml}\ma\\
    & +2f\mathring{h}_{ij}\bar{R}_{3i3j}-2F^{kl}\bar{R}_{3k3l}\ma-2F^{kl}\mathring{h}_{ij}(\bar{\n}_l\bar{R}_{3jki}+\bar{\n}_i\bar{R}_{3klj})\\
    & -4F^{kl}\mathring{h}_{ij}(h_{jm}\bar{R}_{mkli}-h_{km}\bar{R}_{mjil})+2F^{kl,pq}\ch\n_ih_{kl}\n_jh_{pq}\\
     \leq&  \mathcal{L}(\ma)-\f14|\n\mathring{A}|^2+cC_0|\mathring{A}|\sigma^{-5},
    \end{align*}
   for $\sigma\geq c(C_0^2+B_1^2+B_2)$. Now let $C_1$ be sufficiently large compared to the absolute constant $c_1$ in Lemma \ref{na}. Then
   \[
   \begin{aligned}
   \pd t(|\nabla\mathring{A}|^2+C_1|\mathring{A}|^2\sigma^{-2}) \leq&\mathcal{L}(|\nabla\mathring{A}|^2+C_1|\mathring{A}|^2\sigma^{-2})\\
   &-c_1|\nabla\mathring{A}|^2\sigma^{-2}+c_1C_0|\nabla\mathring{A} |\sigma^{-6}+cC_0^2\sigma^{-10},
   \end{aligned}
   \]
   for $\sigma\geq c(C_0^2+B_1^2+B_2+B_3)$. Using the parabolic maximum principle as before the desired estimate now follows from Proposition \ref{prop1.8}.
\end{proof}

From Proposition \ref{3.5}, \ref{prop1.8} and \ref{3.10}, we have established Theorem \ref{3.3}. Consequently, higher derivative estimates and long time existence follows easily.
 To prove the exponential convergence to a constant harmonic mean curvature surface $\Sigma_{\infty}$, we first need the lower bound of the operator $\mathcal{L}$.

 \begin{lem}\label{mu1}
     Suppose $\Sigma$ is a hypersurface of $N$ in $\mathcal{B}_{\sigma}(B_1, B_2, B_3)$. Then, for any function $u$ with $\int_{\Sigma}ud\mu=0$ satisfies the estimates
      \[ \int_{\Sigma}u\mathcal{L}ud\mu\leq  -\left( \frac{1}{2\sigma^2}-\frac{m}{\sigma^3} -c\sigma^{-4} \right)\int_{\Sigma}u^2d\mu, \]
      and
            \[ \int_{\Sigma}F^{ij}u_iu_jd\mu\geq  \left( \frac{1}{2\sigma^2}-\frac{m}{\sigma^3} -c\sigma^{-4} \right)\int_{\Sigma}u^2d\mu, \]
     provided $\sigma\geq c(C_0^2+B_1^2+B_2+B_3)$, where $c$ depends on $C_0, B_1, B_2, B_3$.
 \end{lem}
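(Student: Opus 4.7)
The plan is to treat $\mathcal{L}$ as a small perturbation of the rescaled Laplacian $\tfrac14\Delta_\Sigma$ and reduce everything to a quantitative first-eigenvalue estimate on a nearly round sphere in the Schwarzschild conformal geometry. Since $\Sigma\in\mathcal{B}_\sigma(B_1,B_2,B_3)$, Proposition \ref{2.2} together with the traceless estimate of Proposition \ref{prop1.8} give principal curvatures $\lambda_i=1/r_0+O(\sigma^{-2})$ with $|\mathring A|\leq cC_0\sigma^{-3}$. A direct computation in the principal frame yields $F^{ii}=\lambda_j^2/H^2$ ($j\neq i$), hence
\[
F^{ij}=\tfrac14 g^{ij}+O(\sigma^{-2})g^{ij}.
\]
For $\nabla F^{ij}=F^{ij,pq}\nabla_k h_{pq}$, I decompose $\nabla h=\nabla\mathring h+\tfrac12\nabla H\,g$ and use the homogeneity identity $F^{ij,pq}h_{pq}=0$ (so that $F^{ij,pq}g_{pq}=-2F^{ij,pq}\mathring h_{pq}/H=O(C_0\sigma^{-1})$) to pair the small factor against $\nabla H$. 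Combined with the traceless derivative bound of Proposition \ref{3.10}, this gives the sharp estimate $|\nabla F^{ij}|\leq c\sigma^{-3}$.

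The central analytic step is the first-eigenvalue estimate for $-\Delta_\Sigma$. By Proposition \ref{2.1}, $\Sigma$ lies $O(1)$-close to a Euclidean coordinate sphere of radius $r_0=\sigma+O(1)$, and because the ambient metric is $(1+m/(2r))^4\delta+P$ with $P$ of lower order, the induced metric on $\Sigma$ is pointwise close to $(1+m/(2\sigma))^4$ times the round metric of radius $r_0$, i.e., to the round metric of effective radius $R=r_0(1+m/(2r_0))^2=\sigma+m+O(1)$. A spectral perturbation argument (expanding the induced metric around the model round sphere and using that the first eigenvalue of $-\Delta$ on $S^2_R$ is $2/R^2$) then produces the Poincar\'e inequality
\[
\int_\Sigma|\nabla u|^2\,d\mu\ \geq\ \left(\frac{2}{\sigma^2}-\frac{4m}{\sigma^3}-c\sigma^{-4}\right)\int_\Sigma u^2\,d\mu
\]
for every $u$ with $\int_\Sigma u\,d\mu=0$.

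From here, the second inequality of the lemma follows by combining $F^{ij}\geq(\tfrac14-c\sigma^{-2})g^{ij}$ with this Poincar\'e inequality; the product $(\tfrac14-c\sigma^{-2})(2/\sigma^2-4m/\sigma^3-c\sigma^{-4})$ expands to $1/(2\sigma^2)-m/\sigma^3-c\sigma^{-4}$. For the first inequality, I integrate by parts on the closed surface $\Sigma$,
\[
\int_\Sigma u\,\mathcal{L} u\,d\mu=-\int_\Sigma F^{ij}u_iu_j\,d\mu-\int_\Sigma u(\nabla_iF^{ij})u_j\,d\mu,
\]
and control the cross term by Cauchy--Schwarz and Young with small parameter: using $|\nabla F^{ij}|\leq c\sigma^{-3}$ and $\|\nabla u\|_2^2\leq 5\int F^{ij}u_iu_j\,d\mu$,
\[
\Big|\int_\Sigma u(\nabla_iF^{ij})u_j\,d\mu\Big|\ \leq\ \sigma^{-2}\int_\Sigma F^{ij}u_iu_j\,d\mu+c\sigma^{-4}\int_\Sigma u^2\,d\mu,
\]
and the factor $(1-\sigma^{-2})$ in front of $\int F^{ij}u_iu_j$ is absorbed into the error since $\sigma^{-2}\cdot(1/(2\sigma^2))=O(\sigma^{-4})$.

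The main obstacle is the eigenvalue step: the lemma demands the $m/\sigma^3$ correction with only $O(\sigma^{-4})$ error, so the conformal factor $(1+m/(2r))^4$ must be tracked quantitatively in the induced metric and one must argue that the $O(1)$-deviation $r_0-\sigma$ (allowed by Proposition \ref{3.4}) contributes only to the allowable $c\sigma^{-4}$ (with $c$ depending on $C_0,B_1,B_2,B_3$). Everything else is a routine perturbation argument once the model round sphere $S^2_R$ is identified and Propositions \ref{2.1}, \ref{2.2}, \ref{prop1.8}, \ref{3.10} are invoked.
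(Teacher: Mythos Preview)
Your approach is essentially the paper's: both treat $\mathcal{L}$ as a perturbation of $\tfrac14\Delta$ and reduce to the first Laplacian eigenvalue on a near-round sphere. Two small remarks. First, the paper does not re-derive the eigenvalue bound but simply quotes it from Huisken--Yau (their Lemma~3.13), obtaining $\mu_{Lap}\geq 2\sigma^{-2}-4m\sigma^{-3}-c\sigma^{-4}$ directly; your spectral-perturbation sketch is a reasonable substitute but is not needed. Second, you invoke Propositions~\ref{prop1.8} and~\ref{3.10}, which concern \emph{solutions of the flow}; the lemma, however, is stated for an arbitrary $\Sigma\in\mathcal{B}_\sigma(B_1,B_2,B_3)$, and the bounds $|\mathring A|\leq B_2\sigma^{-3}$, $|\nabla\mathring A|\leq B_3\sigma^{-4}$ you need are already part of the definition of $\mathcal{B}_\sigma$ (this is why the final constant is allowed to depend on $B_2,B_3$). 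Organizationally the paper proceeds in the opposite order: it writes the exact identity $\mathcal{L}u=\tfrac14\Delta u+H^{-2}(\mathring h^{ij}-Hg^{ij})\mathring h_{jk}u_{ki}$, integrates the correction term by parts to obtain the first inequality (with the intermediate bound $-\int u\mathcal{L}u\geq(\tfrac14-c\sigma^{-2})\int|\nabla u|^2-c\sigma^{-4}\int u^2$), and then deduces the second by one more integration by parts using $|F^{ij}_{,j}|\leq c\sigma^{-3}$. Your route---pointwise bound $F^{ij}\geq(\tfrac14-c\sigma^{-2})g^{ij}$ to get the second inequality first, then integrate by parts for the first---is a harmless rearrangement of the same ingredients.
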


\begin{proof}
    By Lemma 3.13 in \cite{Huisken1996DefinitionOC}, the lowest eigenvalue $\mu_{Lap}$ of Laplace-Beltrami operator on $\Sigma$ satisfies
    \begin{equation}\label{le}
        \mu_{Lap}\geq \frac{2}{\sigma^2}-\frac{4m}{\sigma^3}-cC_0B_1\sigma^{-4}.
    \end{equation}  
Computing directly , we have pointwisely
\[ F^{11}=\frac{\lambda_2^2}{(\lambda_1+\lambda_2)^2}, \quad
 F^{22}=\frac{\lambda_1^2}{(\lambda_1+\lambda_2)^2}.\]
Then we can decompose the operator $\mathcal{L}$ by, for any function $u\in C^{\infty}(\Sigma)$,
\begin{align*}
\mathcal{L}u & =F^{kl}u_{kl}=F^{11}u_{11}+F^{22}u_{22}\\
&=\frac{1}{4} \Delta u+\frac{1}{H^{2}}\left(\mathring{h}^{ij}-H{g}^{ij}\right)\mathring{h}_{jk} u_{ki}.
\end{align*}
Integrating by parts gives
\begin{align*}
 & \int_{\Sigma}\frac{u}{H^{2}} \left(\mathring{h}^{ij}-H{g}^{ij}\right)\mathring{h}_{jk} u_{ki}d\mu\\
=&-\int_{\Sigma}\f1{H^{2}}\left(\mathring{h}_{ij}-Hg_{ij}\right)\mathring{h}_{jk} u_ku_id\mu+\int_{\Sigma}\f{2u}{H^3}\left(\mathring{h}_{ij}-Hg_{ij}\right)\mathring{h}_{jk}H_ku_id\mu\\
&- \int_{\Sigma}\f u{H^2}\left(\mathring{h}_{ij,k}-H_kg_{ij}\right)\mathring{h}_{jk}u_id\mu-\int_{\Sigma}\f u{H^2}\left(\mathring{h}_{ij}-Hg_{ij}\right)\mathring{h}_{jk,k}u_id\mu\\
 \leq & c\int_{\Sigma}[\sigma^{-2}|\n u|^2 +|u||\n u| \sigma^{-3}]d\mu\\
\leq& c\left(\sigma^{-2}\int_{\Sigma}|\n u|^2d\mu+\sigma^{-4}\int_{\Sigma}u^2d\mu\right).
\end{align*}
By \eqref{le}, we get
\begin{equation}\label{e-cal-L}
    \begin{split}
        &\int_{\Sigma}u\mathcal{L}ud\mu \\
        \leq& -\frac{1}{4}\int_{\Sigma}|\n u|^2d\mu +c\left(\sigma^{-2}\int_{\Sigma}|\n u|^2d\mu+\sigma^{-4}\int_{\Sigma}u^2d\mu\right)\\
        = & -\left(\frac{1}{4}-c\sigma^{-2}\right)\int_{\Sigma}{|\n u|^2d\mu}+c\sigma^{-4}\int_{\Sigma}u^2d\mu\\
        \leq & -\left( \frac{1}{2\sigma^2}-\frac{m}{\sigma^3} -c\sigma^{-4} \right)\int_{\Sigma}u^2d\mu.
    \end{split}
\end{equation}
This proves the first inequality.

To show the second inequality, we use (\ref{e-cal-L}) and the fact that $|F^{ij}_{,j}|\leq c\sigma^{-3}$ to compute
\begin{eqnarray*}
\int_{\Sigma}F^{ij}u_iu_jd\mu
&=&-\int_{\Sigma}u\mathcal{L}ud\mu-\int_{\Sigma}F^{ij}_{,j}u_iud\mu\\
&\geq & \left(\frac{1}{4}-c\sigma^{-2}\right)\int_{\Sigma}{|\n u|^2d\mu}-c\sigma^{-4}\int_{\Sigma}u^2d\mu-c\sigma^{-3}\int_{\Sigma}|\nabla u||u|d\mu\\
&\geq & \left(\frac{1}{4}-c'\sigma^{-2}\right)\int_{\Sigma}{|\n u|^2d\mu}-c'\sigma^{-4}\int_{\Sigma}u^2d\mu\\
        &\geq & \left( \frac{1}{2\sigma^2}-\frac{m}{\sigma^3} -c''\sigma^{-4} \right)\int_{\Sigma}u^2d\mu.
\end{eqnarray*}
\end{proof}

From (1) of Lemma \ref{lem1.7}, we have $|F-f|\leq c\sigma^{-2}$, where $c$ depends on $C_0, B_2, B_3$. Actually, we have better decay estimate of $|F-f|$.

\begin{lem}\label{sig3}
    Suppose $\Sigma_t$ is a solution of \eqref{flow} contained in $\mathcal{B}_{\sigma}(B_1, B_2, B_3)$.
    Then, for all $t\geq0$, 
    \[ |F-f|\leq c\sigma^{-3}, \]
    where $c$ depends on $C_0, B_1, B_2, B_3$.
\end{lem}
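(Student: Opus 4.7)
The plan is to sharpen the $\sigma^{-2}$ bound of Lemma \ref{lem1.7}(1) by exploiting the finer expansion of the curvatures in Proposition \ref{2.2}. Since $\Sigma_t\in\mathcal{B}_\sigma(B_1,B_2,B_3)$ with the improved bounds $|\mathring A|\le cC_0\sigma^{-3}$ and $|\nabla\mathring A|\le cC_0\sigma^{-4}$ coming from Propositions \ref{prop1.8} and \ref{3.10}, the hypotheses of Propositions \ref{2.1} and \ref{2.2} are met uniformly in $t$. Writing $F=H/4-|\mathring A|^2/(2H)$ and substituting the asymptotics for $H$, I would obtain on each slice
\begin{equation*}
F = \frac{1}{2r_0(t)} - \frac{m}{r_0(t)^2} + \frac{3m\langle \vec{a}(t),\nu_e\rangle_e}{2r_0(t)^3} + O\bigl((C_0^2+B_2+B_3)\sigma^{-3}\bigr),
\end{equation*}
in which the first two terms are constant on $\Sigma_t$ at fixed $t$ and only the third varies with position.

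Averaging over $\Sigma_t$ and subtracting, as was already done on a single slice in the proof of Proposition \ref{3.5}, yields
\begin{equation*}
F - f = \frac{3m}{2r_0(t)^3}\Bigl(\langle \vec{a}(t),\nu_e\rangle_e - \oint_{\Sigma_t}\langle \vec{a}(t),\nu_e\rangle_e\, d\mu_t\Bigr) + O\bigl((C_0^2+B_2+B_3)\sigma^{-3}\bigr),
\end{equation*}
so the claimed estimate reduces entirely to controlling $|\vec{a}(t)|$ by a constant that depends on $C_0, B_1, B_2, B_3$ but not on $\sigma$.

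This last reduction is the only substantive point, and is where $B_1$ enters. The "moreover" clause of Proposition \ref{2.1} gives $|y-\vec{a}| = r_0 + O(\sigma^{-1})$ for every $y\in\Sigma_t$, while the defining inclusion of $\mathcal{B}_\sigma$ forces the oscillation of $|y|$ over $\Sigma_t$ to be at most $2B_1$. On the model sphere of radius $r_0$ centered at $\vec{a}$, $|y|$ ranges through $[\,r_0-|\vec{a}|,\;r_0+|\vec{a}|\,]$ up to $O(\sigma^{-1})$ error (using that the Gauss map approximately covers $S^2$, which Proposition \ref{2.1} guarantees for $\sigma$ large). Comparing the two oscillations yields $2|\vec{a}|\leq 2B_1+O(\sigma^{-1})$, hence $|\vec{a}(t)|\leq B_1+1$ uniformly in $t$. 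Plugging this back, $\frac{3m|\vec{a}|}{r_0^3}\leq c\sigma^{-3}$, and the stated $|F-f|\leq c\sigma^{-3}$ follows. The main (mild) obstacle is thus the uniform-in-$t$ control of the offset vector $\vec{a}(t)$, extracted purely from the roundness package of $\mathcal{B}_\sigma$ rather than from the evolution equation; once it is in hand, the sharpened curvature expansion does all the work.
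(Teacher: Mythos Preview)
Your argument is correct, but it takes a genuinely different route from the paper's. The paper does not use the expansion of Proposition~\ref{2.2} or the position of the center $\vec a(t)$ at all. Instead it bounds the gradient of $F$ directly: writing $F=\tfrac{H}{2}-\tfrac{|A|^2}{2H}$ and using Lemma~\ref{lem1.4} (with $\eta=\tfrac18$) to get $|\nabla H|^2\le 8|\nabla\mathring A|^2+8|w|^2\le c\sigma^{-8}$ and $|\nabla|A||^2\le|\nabla A|^2\le c\sigma^{-8}$, one finds $|\nabla F|\le c\sigma^{-4}$. Then $|F-f|\le \mathrm{diam}(\Sigma_t)\cdot\max_{\Sigma_t}|\nabla F|\le c\sigma\cdot c\sigma^{-4}=c\sigma^{-3}$, the diameter being controlled by the Willmore energy via the first variation formula.

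Your approach exploits more structure: you identify the only genuinely varying term in $F$ as $\tfrac{3m}{2r_0^3}\langle\vec a,\nu_e\rangle_e$ and then bound $|\vec a|$ purely from the radial pinching $\sigma-B_1\le r\le\sigma+B_1$ in $\mathcal B_\sigma$, using surjectivity of the Gauss map. This is a perfectly valid alternative and has the merit of explaining \emph{why} $F-f$ is small, not just that it is. The paper's gradient route is shorter, avoids the auxiliary geometric argument about $\vec a$, and feeds directly into later uses (e.g.\ $|\nabla F|\le c\sigma^{-4}$ is itself reused). One small remark: your appeal to Propositions~\ref{prop1.8} and~\ref{3.10} for improved $|\mathring A|,|\nabla\mathring A|$ bounds is unnecessary, since the statement allows $c$ to depend on $B_2,B_3$ and the raw $\mathcal B_\sigma$ bounds already suffice for Propositions~\ref{2.1}--\ref{2.2}.
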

\begin{proof}
    Since $F=\frac{H}{2}-\frac{|A|^{2}}{2 H}$, we can calculate directly that
    \[ \nabla F=\frac{\nabla H}{2}-\frac{\nabla |A|^{2}}{2H}+\frac{\vert A\vert^{2}}{2H^{2}} \nabla H=\frac{1}{2}\left(1+\frac{\vert A\vert^{2}}{H^{2}}\right) \nabla H-\frac{\vert A\vert \nabla|A|}{H}. \]
    By Lemma \ref{lem1.4}, we know that
    \[ |\nabla H|^{2} \leq 8|\nabla\mathring{A}|^{2}+8\omega^{2} \leq c\sigma^{-8}, \]
    \[ |\nabla |A||^{2} \leq|\nabla A|^{2} \leq 5|\nabla\mathring{A}|^{2}+4\omega^{2} \leq c\sigma^{-8}. \]
Then, 
\[ |\n F|^2\leq c\sigma^{-8}. \]
Therefore, 
\[ |F-f|\leq diam(\Sigma_t)\max_{\Sigma_t}|\n F|\leq c\sigma^{-3}, \]
    where where $c$ depends on $C_0, B_1, B_2, B_3$. Here $diam(\Sigma_t)$ is the intrinic diameter, which by the 1st variation formula is bounded by the Willmore energy.
\end{proof}

Next, we show the surfaces $\Sigma_t$ converge exponentially to a constant harmonic mean curvature surface $\Sigma_{\sigma}$. 
From the Appendix, we have
\begin{equation}\label{pf}
     \pd tF=\mathcal{L}F-(f-F)(F^{kl}h_{mk}h_{lm}-F^{kl}\bar{R}_{3k3l}).
\end{equation}
Then, in view of the fact that $\int_{\Sigma_t}(f-F)d\mu_t=0$, we obtain
\begin{align*}
&\frac{d}{dt}\int_{\Sigma_t}(f-F)^2d\mu_{t}\\
 =&\int_{\Sigma_t}[-2(f-F)\mathcal{L}F+2(f-F)^2(F^{kl}h_{mk}h_{ml}-F^{kl}\bar{R}_{3k3l})+H(f-F)^3]d\mu_{t}.
 \end{align*}
 By Proposition \ref{2.2} and \ref{3.4}, we can calculate
 \begin{align}\label{f2}
     &F^{kl}h_{mk}h_{ml}=F^{11}\lambda_1^2+F^{22}\lambda_2^2=2F^2 \\
     =& 2\left( \frac{H}{4}-\frac{|\mathring{A}|^2}{2H} \right)^2
     =\frac{1}{2\sigma^2}-\frac{2m}{\sigma^3}+O(\sigma^{-4}),\notag
 \end{align}
 and 
\begin{align}
&F^{kl}\bar{R}_{3k3l}=F^{11}\bar{R}_{3131}+F^{22}\bar{R}_{3232}\notag\\
    =& \frac{\lambda_2^2-\lambda_1^2}{(\lambda_1+\lambda_2)^2}\bar{R}_{3131}-\frac{\lambda_1^2}{(\lambda_1+\lambda_2)^2}\bar{R}ic(\nu, \nu)\notag\\
    =&\frac{m}{2\sigma^3}+O(\sigma^{-4}),\label{r3i3j}
\end{align}
Using Lemma \ref{mu1}, we compute
\begin{align*}
&\frac{d}{dt}\int_{\Sigma_t}(f-F)^2d\mu_{t}\\
\leq & -\left( \frac{1}{\sigma^2}-\frac{2m}{\sigma^3}-c\sigma^{-4} \right)\int_{\Sigma_{t}}(F-f)^{2}d\mu_t\\
& +2\left(\f1{2\sigma^{2}}-\f {2m}{\sigma^3}-\frac{m}{2\sigma^{3}}+\f c{\sigma^4}\right)\int_{\Sigma_{t}}(F-f)^{2}d\mu_t
 +\int_{\Sigma_{t}}H(f-F)^{3}d\mu_t \\
\leq &-\left( \frac{1}{\sigma^2}-\frac{2m}{\sigma^3}-c\sigma^{-4} \right)\int_{\Sigma_{t}}(F-f)^{2}d\mu_t+\left( \frac{1}{\sigma^2}-\frac{5m}{\sigma^3}+c\sigma^{-4} \right)\int_{\Sigma_{t}}(F-f)^{2}d\mu_t\\
& +\int_{\Sigma_{t}}H(f-F)^{3}d\mu_t \\
\leq& \left(-\frac{3m}{\sigma^{3}}+c\sigma^{-4}\right)\int_{\Sigma_t}(F-f)^{2}d\mu_t-\int_{\Sigma_t}H(F-f)^3d\mu_t.
\end{align*}
 By Lemma \ref{sig3}, we have $|H(F-f)|\leq c\sigma^{-4}$.
Therefore, we obtain
\begin{equation}\label{exp}
    \frac{d}{dt}\int_{\Sigma_t}(f-F)^2d\mu_{t}\leq (-\frac{3m}{\sigma^{3}}+c\sigma^{-4})\int_{\Sigma_t}(F-f)^{2}d\mu_t,
\end{equation}  
which implies that
\begin{equation}\label{expo}
    \int_{\Sigma_t}(f-F)^2d\mu_{t}\leq e^{-\frac{2mt}{\sigma^3}}\int_{\Sigma_0}(f-F)^2d\mu_{0},
\end{equation}
for $\sigma\geq\sigma_0$.
Exponential convergence follows by standard interpolation inequalities,  completing the proof of Theorem \ref{main}.
\hfill $\boxed{}$

\vspace{.2in}

\section{The Foliation}
In this section, we prove the existence of the foliation by constant harmonic mean curvature surfaces. We begin by providing a sufficient condition that guarantees a family of surfaces form a foliation. 
\begin{lem}\label{fol}
    Let $\Sigma_\sigma=\phi^\sigma(\mathbb{S}^2)\hookrightarrow N$ be a family of surfaces parametrized by $\sigma\in[\sigma_0,\infty)$. Define the lapse function by 
    $u=\<\pd\sigma\phi^\sigma,\nu\>$, where $\nu$ is the unit outer normal to $\Sigma_\sigma$. If furthermore $u>c>0$, then this family of surfaces will foliate the ambient space in $N\setminus B_{2\sigma_0}$. 
\end{lem}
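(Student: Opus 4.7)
The plan is to realize the family of leaves as the slices of an evaluation map. Define $\Phi : \mathbb{S}^2 \times [\sigma_0, \infty) \to N$ by $\Phi(p, \sigma) = \phi^\sigma(p)$, so $\Phi(\mathbb{S}^2 \times \{\sigma\}) = \Sigma_\sigma$. My aim is to show that $\Phi$ is a diffeomorphism onto an open region containing $N \setminus B_{2\sigma_0}$; this will simultaneously yield disjointness of the $\Sigma_\sigma$, coverage, and smoothness of the foliation.

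First, $\Phi$ is a local diffeomorphism. The differential $d\Phi_{(p,\sigma)}$ restricted to $T_p\mathbb{S}^2$ equals $d\phi^\sigma_p$, which is injective onto $T_{\Phi(p,\sigma)}\Sigma_\sigma$ since $\phi^\sigma$ is an embedding. In the transverse $\sigma$-direction, $d\Phi(\partial_\sigma) = \partial_\sigma \phi^\sigma$ has normal component $u \geq c > 0$ by hypothesis, and so is linearly independent from $T\Sigma_\sigma$. A dimension count then shows $d\Phi_{(p,\sigma)}$ is an isomorphism at every $(p, \sigma)$.

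Second, I will promote this to a global diffeomorphism by a properness-plus-connectedness argument, interpreting the family in light of the annular localization $\sigma - B_1 \leq r \leq \sigma + B_1$ that comes from $\Sigma_\sigma \in \mathcal{B}_\sigma(B_1, B_2, B_3)$. Given any $y \in N$ with $r(y) \geq 2\sigma_0$, a sequence $\Phi(p_n, \sigma_n) \to y$ forces $\sigma_n \in [\sigma_0, r(y) + B_1]$, so $(p_n, \sigma_n)$ has a convergent subsequence in the domain, and $y$ lies in the image. Hence the image of $\Phi$ is closed in $N \setminus B_{2\sigma_0}$. Combined with openness from the local diffeomorphism property, the connectedness of $N \setminus B_{2\sigma_0}$ (topologically $\mathbb{S}^2 \times [0, \infty)$) implies the image contains all of $N \setminus B_{2\sigma_0}$. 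The same compactness argument makes $\Phi$ proper onto its image, so $\Phi$ is a covering map; simple-connectedness of $N \setminus B_{2\sigma_0}$ and connectedness of the corresponding preimage in the domain force the covering to be trivial, i.e., $\Phi$ is injective and therefore a diffeomorphism onto the image.

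The main obstacle lies in this last step, specifically in verifying that the preimage of $N \setminus B_{2\sigma_0}$ in $\mathbb{S}^2 \times [\sigma_0, \infty)$ is connected so the covering has a single sheet. An alternative, perhaps cleaner route would bypass covering theory entirely: one directly proves monotone nesting of the regions $\Omega_\sigma$ enclosed by $\Sigma_\sigma$ by arguing at any hypothetical first contact point between $\Sigma_{\sigma_1}$ and $\Sigma_{\sigma_2}$ (for $\sigma_1 < \sigma_2$) that the two surfaces share a tangent plane and outward normal, whereupon $u \geq c > 0$ at that point forces them to separate for $\sigma$ slightly past the first contact, contradicting the first-contact choice. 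Either argument ultimately rests on the positivity of the lapse function to convert local transversality into global monotonicity.
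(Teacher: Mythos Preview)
Your approach differs genuinely from the paper's. Rather than invoking covering-space theory, the paper argues constructively via the normal exponential map: it builds maps $F_i$ on successive slabs $[i\delta_0,(i+1)\delta_0]\times\Sigma$ by flowing along $\partial_\sigma\phi^\sigma$ through $\exp^N$, checks that each $F_i$ is a local diffeomorphism by computing $dF_i$, and then uses the bound $u\geq c>0$ (together with positivity of the injectivity radius) to show the slab width $\delta_0$ can be taken uniform in $i$, so the slabs exhaust $N\setminus B_{2\sigma_0}$ after gluing. Your topological route is conceptually cleaner, but as you correctly flag, the sheet count is where the content lies: to make the covering argument airtight you must know the relevant restricted domain is connected, which already amounts to knowing the leaves are monotonically nested. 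Your first-contact alternative is exactly the standard way to close this, and it renders the covering theory unnecessary: $u>c>0$ forces each $\Sigma_\sigma$ to be a strictly positive normal graph over $\Sigma_{\sigma'}$ for $\sigma$ slightly above $\sigma'$, so the enclosed regions $\Omega_\sigma$ increase for small increments; taking $\sigma^*=\sup\{\sigma:\Omega_{\sigma_1}\subset\Omega_{\sigma'}\text{ for all }\sigma'\in[\sigma_1,\sigma]\}$ and invoking local nesting at $\sigma^*$ yields a contradiction unless $\sigma^*=\infty$, giving global nesting and hence injectivity of $\Phi$. Once injectivity is in hand, properness plus the annular bound $|r-\sigma|\leq B_1$ give surjectivity onto $N\setminus B_{2\sigma_0}$ directly. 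Both arguments ultimately extract the same thing from $u\geq c$; the paper's buys a uniform tubular-neighborhood width, yours buys monotone nesting.
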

\begin{proof}
    We will prove that the union of this family of surfaces will pass through any points in $N\backslash B_{2\sigma_0}$. In other words, for any $x\in N\setminus B_{2\sigma_0}$, there exists some surface $\Sigma_\sigma$, for $\sigma>\sigma_0$ such that $x\in \Sigma_{\sigma}$. It is important to note that it is not true generally that the radial exponential map is a global diffeomorphism. And thus rather than proving the global diffeomorphism,  we will prove that there exists a positive constant $\delta_0$ such that for any $i\in\mathbb{N}$, the map defined by 
    \[
    F_i:[i\delta_0,(i+1)\delta_0]\times\pa B_{2\sigma_0+i\delta_0}\longrightarrow N\setminus B_{2\sigma_0}
    \]
    \[
    (t,p)\longrightarrow \exp_p^N(t\f{\pa \phi^\sigma}{\pa \sigma})
    \]
    is a local diffeomorphism. Indeed, we can take 
    \[
\delta_0=\inf_{p}\norm{(\exp_p^N)^{-1}}=\inf_{p,X_p\in(\exp_p^N)^{-1}(W_{p,i})}\norm{X_p},
\]
where $W_{p,i}=\mathrm{Img}(F_i)$ is the tubular neighbourhood of the surface $\pa B_{2\sigma_0+i\delta_0}$.
    \begin{claim}
        $F_i$ is a local diffeomorphism, and the constant $\delta_0$ must be positive.
    \end{claim}
    \begin{proof}[Proof of claim]
      The first statement can be seen as an application of the inverse function theorem. Hence we calculate the differential of the map $F_i$ as for any fixed $(t_0,p_0)$ we have
      \begin{align*}
      dF_i|_{(0,p_0)}(t,v) & =\f d{ds}|_{s=0}\left[\exp_{\gamma(s)}((t+s)\f{\pa \phi^\sigma}{\pa \sigma})\right]\\
      & =(d\exp_p)_{t\f{\pa \phi^\sigma}{\pa \sigma}}(\f{\pa \phi^\sigma}{\pa \sigma})+U(0,t),
      \end{align*}
      where $\gamma(s)$ is a curve starting at $p$ with $\dot{\gamma}(0)=v_p$ and $U(0,t)$ is the variational field along the curve $\xi(t)=\exp_p(t\f{\pa \phi^\sigma}{\pa \sigma})$. We then find that at $s=0$, $U(0,0)=\dot{\gamma}(0)=v$, thus 
      \[
      dF_i|_{(0,p_0)}(0,v)=v_p+\f{\pa \phi^\sigma}{\pa \sigma}.
      \]
      It follows that $dF_i$ is a local diffeomorphism. And the inverse function theorem gives some positive constant $\delta_i$  such that 
      \[
      G_i:W_i\longrightarrow [-\delta_i,\delta_i]\times\pa B_{2\sigma_0+\delta_{i-1}}
      \]
      is a diffeomorphism. We now prove that there exists some positive constant $\delta_0$ such that $2\delta_i\geq\delta_0$. Fix some $p\in\pa B_{2\sigma_0+\delta_i}$, let $l_i$ be the length of the curve $\xi(t)=\exp_p(t\f{\pa\phi^\sigma}{\pa\sigma})$ and $\theta_i$ the angle between $\f{\pa\phi^\sigma}{\pa\sigma}$ and $\nu$ at $p$, then we have 
      \[
      \delta_i\geq dist(q_i,\pa B_{2\sigma_0+\delta_i})\geq \f12l_i\cos\theta_i\geq \f12c.
      \]
      Here $q_i=\exp_p(l_i\f{\pa\phi^\sigma}{\pa\sigma})$ is the end point of $\xi(t)$. Here $l_i$ has a lower bound, which is due to the positivity of the injectivity radius. 
      \end{proof}
    Once the claim is established, we can then glue all the local diffeomorphisms together. It is noted that possibly the gluing part of $F_i$ and $F_{i+1}$ would be non-smooth, but this issue can be addressed by slightly modifying the map, this completes the proof. 
    
\end{proof}

Consider the smooth operator $\mathscr{F}: C^3(S^2, N)\rightarrow C^1(S^2)$ which assigns to each embedding $\phi: S^2\rightarrow N$ the harmonic mean curvature $\mathscr{F}(\phi)$ of the surface $\Sigma=\phi(S^2)$. 
Given a variation vector field $V$ on a constant harmonic mean curvature surface $\Sigma$, we could compute directly that the first variation of the $\mathscr{F}$ operator at $\Sigma$ in direction $V$ is given by
\[
d\mathscr{F}(\phi)\cdot V=-\left(\mathcal{L}+F^{ij}h_{jk}h_{ik}-F^{ij}\bar{R}_{3i3j}\right)\<V,\nu\>,
\]
where $\phi:S^2\rightarrow{N}$ is the embedding of the constant harmonic mean curvature surface.
From \eqref{f2}, we know $F^{ij}h_{jk}h_{ik}=2F^2$.
Define the linearized harmonic mean curvature operator $L$ on $\Sigma$
\[
L=-\left(\mathcal{L}+2F^2-F^{ij}\bar{R}_{3i3j}\right).
\]
Since the operator $L$ is not self-adjoint, many useful tools in functional analysis and PDE cannot be applied to it. To proceed further, we consider the adjoint operator $L^*$ of $L$. By direct computation, we see that
\begin{equation}\label{e-adjoint-L}
    L^*u=Lu-2F^{ij}_{,i}u_j-F^{ij}_{,ij}u.
\end{equation}
Then the operator
\begin{eqnarray}\label{e-S}
    Su&:=&\frac{1}{2}(L+L^*)u=Lu-F^{ij}_{,i}u_j-\frac{1}{2}F^{ij}_{,ij}u\nonumber\\
    &=& -\left(\mathcal{L}u+2F^2u-F^{ij}\bar{R}_{3i3j}u+F^{ij}_{,i}u_j+\frac{1}{2}F^{ij}_{,ij}u\right)\nonumber\\
    &=& -(F^{ij}u_j)_{,i}-\left(2F^2-F^{ij}\bar{R}_{3i3j}+\frac{1}{2}F^{ij}_{,ij}\right)u
\end{eqnarray}
is a self-adjoint operator.

Denote 
    \[\mu_0:=\inf \left\{\int_{\Sigma}uSud\mu: ||u||_{L^2}=1, \int_{\Sigma}ud\mu=0\right\}>0.\]
Then we have the following lower bound for $\mu_0$:

\begin{lem}\label{strc}
    Let $\Sigma_{\sigma}$ be the constant harmonic mean curvature hypersurface constructed in Theorem \ref{main}. Then for $\sigma\geq\sigma_0$, we have
    \[ \mu_0\geq \frac{3m}{2}\sigma^{-3}-c\sigma^{-4}. \]
\end{lem}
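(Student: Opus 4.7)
\medskip

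\noindent\textbf{Proof proposal.} My plan is to expand the quadratic form $\int_{\Sigma_\sigma} u\,Su\,d\mu$ by integration by parts, apply the Poincar\'e-type bound of Lemma \ref{mu1} to the gradient term, and then identify the leading and subleading orders of the potential $V(x):=2F^{2}-F^{ij}\bar{R}_{3i3j}+\tfrac12 F^{ij}_{,ij}$ using the expansions \eqref{f2} and \eqref{r3i3j} obtained in Section~3. The key observation is that the gradient part and the $\tfrac{1}{2\sigma^{2}}$-terms in $V$ cancel at order $\sigma^{-2}$, leaving a strictly positive contribution at order $\sigma^{-3}$.

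More precisely, because $S$ is self-adjoint and $\Sigma_\sigma$ is closed, the divergence form \eqref{e-S} together with integration by parts yields
\begin{equation*}
\int_{\Sigma_\sigma} u\,Su\,d\mu \;=\; \int_{\Sigma_\sigma} F^{ij}u_{i}u_{j}\,d\mu \;-\; \int_{\Sigma_\sigma} V(x)\,u^{2}\,d\mu.
\end{equation*}
For $u$ with $\int_{\Sigma_\sigma} u\,d\mu=0$ and $\|u\|_{L^2}=1$, the second inequality in Lemma \ref{mu1} gives
\begin{equation*}
\int_{\Sigma_\sigma} F^{ij}u_{i}u_{j}\,d\mu \;\geq\; \frac{1}{2\sigma^{2}}-\frac{m}{\sigma^{3}}-c\sigma^{-4}.
\end{equation*}
From \eqref{f2} we have $2F^{2}=\frac{1}{2\sigma^{2}}-\frac{2m}{\sigma^{3}}+O(\sigma^{-4})$, and from \eqref{r3i3j} we have $F^{ij}\bar{R}_{3i3j}=\frac{m}{2\sigma^{3}}+O(\sigma^{-4})$. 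Substituting and simplifying, the leading $\frac{1}{2\sigma^2}$ contributions cancel and one obtains
\begin{equation*}
\int_{\Sigma_\sigma} u\,Su\,d\mu \;\geq\; \Bigl(-\tfrac{m}{\sigma^{3}}+\tfrac{5m}{2\sigma^{3}}\Bigr) - \tfrac12\int_{\Sigma_\sigma} F^{ij}_{,ij}\,u^{2}\,d\mu - c\sigma^{-4} \;=\; \frac{3m}{2\sigma^{3}} - \tfrac12\int_{\Sigma_\sigma} F^{ij}_{,ij}\,u^{2}\,d\mu - c\sigma^{-4}.
\end{equation*}

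The remaining task is to absorb the term $\int F^{ij}_{,ij}\,u^{2}\,d\mu$ into the $c\sigma^{-4}$ error. This is the step I expect to be the main technical obstacle. Since $F^{ij}$ is a smooth function of $h_{k\ell}$ we have $F^{ij}_{,i}=F^{ij,k\ell}\,\nabla h_{k\ell}$ and $F^{ij}_{,ij}$ is a sum of terms of the type $F^{ij,k\ell,pq}\nabla h\cdot\nabla h$ and $F^{ij,k\ell}\nabla^{2}h$. Using the scaling bounds $|d^{2}F|\leq cC_0\sigma$, $|d^{3}F|\leq cC_{0}^{2}\sigma^{2}$ from Lemma \ref{lem1.9}, combined with the derivative estimates $|\nabla\mathring{A}|\leq c\sigma^{-4}$ from Proposition \ref{3.10} and Codazzi-type comparison between $|\nabla A|$ and $|\nabla\mathring{A}|+|w|$ from Lemma \ref{lem1.4}, together with the standard higher order bound $|\nabla^{2}A|\leq c\sigma^{-5}$ obtained by a further application of the maximum principle to $|\nabla^{2}\mathring{A}|^{2}+C|\nabla\mathring{A}|^{2}\sigma^{-2}$ (in the spirit of Proposition \ref{3.10}), one gets $|F^{ij}_{,ij}|\leq c\sigma^{-4}$. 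Together with $\|u\|_{L^{2}}=1$ this shows the potentially dangerous term is $O(\sigma^{-4})$, completing the desired estimate $\mu_{0}\geq \frac{3m}{2}\sigma^{-3}-c\sigma^{-4}$. The positivity of the sign of $3m/2$ (as opposed to $-m$ or the cancellation giving $0$) is what ultimately makes the foliation argument go through.
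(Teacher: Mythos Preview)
Your proposal is correct and follows essentially the same route as the paper: you expand $\int uSu$ using the divergence form \eqref{e-S}, apply Lemma~\ref{mu1}, and cancel the $\frac{1}{2\sigma^2}$ terms using the expansions \eqref{f2}--\eqref{r3i3j}, absorbing $F^{ij}_{,ij}$ into the $c\sigma^{-4}$ error. The paper does exactly this (using the first inequality of Lemma~\ref{mu1} on $-\int u\mathcal{L}u$ rather than the equivalent second inequality on $\int F^{ij}u_iu_j$), and records the bound $F^{ij}_{,ij}=O(\sigma^{-4})$ in \eqref{e-F-ij} with the same justification you sketch.
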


\begin{proof}
We denote $\Sigma_{\sigma}$ by $\Sigma$ for any $\sigma\geq\sigma_0$.
By \eqref{f2} and \eqref{r3i3j}, we see that 
\begin{equation}\label{fr}
 2F^2-F^{ij}\bar{R}_{3i3j}=\frac{1}{2\sigma^2}-\frac{5m}{2\sigma^3}+O(\sigma^{-4}).
 \end{equation} 
Using Lemma \ref{lem1.9} and Proposition \ref{3.10}, we also have
\begin{equation}\label{e-F-ij}
F^{ij}_{,ij}=O(\sigma^{-4}).
 \end{equation} 
For any $u$ with $||u||_{L^2}=1, \int_{\Sigma}ud\mu=0$, by Lemma \ref{mu1}, we have
\begin{equation*}
    \begin{split}
        \int_{\Sigma}uSud\mu=& \int_{\Sigma}\left[-u\mathcal{L}u-(2F^2-F^{ij}\bar{R}_{3i3j})u^2-\left(F^{ij}_{,i}u_ju+\frac{1}{2}F^{ij}_{,ij}u^2\right)\right]d\mu\\
        \geq& \int_{\Sigma}\left[-u\mathcal{L}u-(2F^2-F^{ij}\bar{R}_{3i3j})u^2\right]d\mu-c\sigma^{-4}\\
        \geq& \left(\frac{1}{2\sigma^2}-\frac{m}{\sigma^3}+c\sigma^{-4}\right)-\left(\frac{1}{2\sigma^2}-\frac{5m}{2\sigma^3}+c\sigma^{-4}\right)\\
        =&\frac{3m}{2\sigma^3}-c\sigma^{-4}.
    \end{split}
\end{equation*}
    
\end{proof}

\begin{lem}\label{inv}
   Let $\Sigma_{\sigma}$ be the constant harmonic mean curvature hypersurface constructed in Theorem \ref{main}. For $\sigma\geq\sigma_0$, the operator $S$ is invertible, and $|S^{-1}|\leq cm^{-1}\sigma^{3}$.
\end{lem}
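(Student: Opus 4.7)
The plan is to exploit the self-adjointness of $S$ established in \eqref{e-S}. Since $S$ is a second-order elliptic operator with smooth coefficients on the closed surface $\Sigma_\sigma$, standard spectral theory yields that its $L^2$ realization has discrete real spectrum $\lambda_1\leq\lambda_2\leq\cdots\to\infty$ with a complete orthonormal eigenbasis. Invertibility is equivalent to $\lambda_k\neq 0$ for every $k$, and in that case the operator norm satisfies $|S^{-1}|=(\min_k|\lambda_k|)^{-1}$. Therefore it suffices to show $|\lambda_k|\geq cm\sigma^{-3}$ for every $k$, which gives both invertibility and the claimed bound simultaneously.

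The eigenvalues $\lambda_k$ for $k\geq 2$ are essentially controlled by Lemma \ref{strc}. Via the Courant--Fischer min-max principle, taking the codimension-one test subspace $W=\{u\in L^2(\Sigma_\sigma):\int_{\Sigma_\sigma}u\,d\mu=0\}$, one obtains
\[
\lambda_2\geq\inf_{0\neq u\in W}\frac{\int_{\Sigma_\sigma}uSu\,d\mu}{\|u\|_{L^2}^2}=\mu_0\geq\frac{3m}{2}\sigma^{-3}-c\sigma^{-4},
\]
which is bounded below by $cm\sigma^{-3}$ once $\sigma\geq\sigma_0$.

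For the remaining eigenvalue $\lambda_1$, I would use the constant test function $u\equiv 1$ in the Rayleigh quotient to bound it from above:
\[
\lambda_1\leq\frac{\int_{\Sigma_\sigma}S(1)\,d\mu}{|\Sigma_\sigma|}=-\frac{1}{|\Sigma_\sigma|}\int_{\Sigma_\sigma}\left(2F^2-F^{ij}\bar R_{3i3j}+\tfrac{1}{2}F^{ij}_{,ij}\right)d\mu.
\]
By \eqref{fr} and \eqref{e-F-ij}, the integrand equals $\frac{1}{2\sigma^2}+O(\sigma^{-3})$, so $\lambda_1\leq-\frac{1}{2\sigma^2}+O(\sigma^{-3})$; in particular $\lambda_1<0$ and $|\lambda_1|\geq\frac{1}{4\sigma^2}\geq cm\sigma^{-3}$ once $\sigma\geq cm$. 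Combining the two cases yields $\min_k|\lambda_k|\geq cm\sigma^{-3}$, so $\ker S=0$ and $|S^{-1}|\leq cm^{-1}\sigma^3$.

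The main technical point is simply verifying that the leading $\frac{1}{2\sigma^2}$ contribution to $\int_{\Sigma_\sigma}S(1)\,d\mu/|\Sigma_\sigma|$ is not cancelled by the $F^{ij}_{,ij}$ divergence correction; estimate \eqref{e-F-ij} confirms that this is genuinely lower order. Beyond this bookkeeping the argument is just the standard self-adjoint spectral dichotomy: the single "bad" eigendirection (close to the constants) is handled by direct Rayleigh-quotient computation, while its $L^2$-orthogonal complement is handled by Lemma \ref{strc}.
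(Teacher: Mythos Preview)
Your argument is correct and in fact cleaner than the paper's. Both proofs hinge on the same two ingredients --- the lower bound $\mu_0\geq\tfrac{3m}{2}\sigma^{-3}-c\sigma^{-4}$ from Lemma~\ref{strc} for mean-zero functions, and the computation \eqref{fr}--\eqref{e-F-ij} giving $S(1)\approx-\tfrac{1}{2\sigma^{2}}$ --- but they deploy them differently. You invoke the Courant--Fischer max--min characterization of $\lambda_2$ directly with the codimension-one subspace $W=\{u:\int u\,d\mu=0\}$, obtaining $\lambda_2\geq\mu_0$ in one line, and then bound $\lambda_1$ above via the constant test function. The paper instead works with the actual eigenfunctions: it pins down $\eta_0=-\tfrac{1}{2\sigma^2}+\tfrac{5m}{2\sigma^3}+O(\sigma^{-4})$, proves the first eigenfunction $h_0$ is $L^2$-close to its mean (estimate \eqref{h0}), uses orthogonality of $h_1$ to $h_0$ to show $|\bar h_1|$ is small (estimate \eqref{h1}), and only then applies Lemma~\ref{strc} to $h_1-\bar h_1$ to get $\eta_1\geq cm\sigma^{-3}$.

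Your route is shorter and avoids the eigenfunction analysis entirely; the trade-off is that the paper's estimates \eqref{h0} and \eqref{h1} on $h_0,h_1$ are reused verbatim in the proof of Theorem~\ref{local} (to show the lapse function has a sign), so the paper's longer argument is doing double duty. As a self-contained proof of Lemma~\ref{inv} alone, your min--max approach is preferable.
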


\begin{proof}
     We denote $\Sigma_{\sigma}$ by $\Sigma$ for any $\sigma\geq\sigma_0$. Let $\eta_0$ be the lowest eigenvalue of $S$ without constraints, i.e.
    \begin{align*}
        \eta_0=\inf_{\{u: ||u||_{L^2}=1\}}\int_{\Sigma}uSud\mu=\inf_{\{u: ||u||_{L^2}=1\}}\int_{\Sigma}uLud\mu. 
    \end{align*}
    By Lemma \ref{mu1}, for $u$ with $||u||_{L^2}=1$ and $\int_{\Sigma}ud\mu=0$, there holds, for $\sigma\geq\sigma_0 $
    \begin{align*}
        &\int_{\Sigma}u\mathcal{L}ud\mu\\
        \leq & -\left(\frac{1}{4}-c\sigma^{-2}\right)\int_{\Sigma}{|\n u|^2d\mu}+c\sigma^{-4}\\
        \leq& c\sigma^{-4}.
    \end{align*}
   By \eqref{fr}, we have
   \[ \eta_0\geq-\frac{1}{2\sigma^2}+\frac{5m}{2\sigma^3}-c\sigma^{-4}.  \]
    On the other hand, if we replace $u$ by a constant, we obtain the reverse inequality. Hence, 
    \begin{equation*}
        \eta_0=-\frac{1}{2\sigma^2}+\frac{5m}{2\sigma^3}+O(\sigma^{-4}).
    \end{equation*}
    Let $h_0$ be the corresponding eigenfunction of $\eta_0$
    \[ Sh_0=\eta_0h_0. \]
    Let $\bar{h}_0=\fint_{\Sigma}h_0d\mu$ be the mean value of $h_0$.
    Multiplying the above identity with $(h_0-\bar{h}_0)$ and integrating it over $\Sigma$ to obtain
    \begin{align*}
        & -\int_{\Sigma}(h_0-\bar{h}_0)\mathcal{L}(h_0-\bar{h}_0)d\mu\\
        =& \int_{\Sigma}\left(\eta_0+2F^2-F^{ij}\bar{R}_{3i3j}+\frac{1}{2}F^{ij}_{,ij}\right)(h_0-\bar{h}_0)^2d\mu\\
        &+\int_{\Sigma}\left(\eta_0+2F^2-F^{ij}\bar{R}_{3i3j}+\frac{1}{2}F^{ij}_{,ij}\right)\bar{h}_0(h_0-\bar{h}_0)d\mu\\
        &+\int_{\Sigma}F^{ij}_{,i}h_{0,j}(h_0-\bar{h}_0)d\mu\\
        =& \int_{\Sigma}\left(\eta_0+2F^2-F^{ij}\bar{R}_{3i3j}\right)(h_0-\bar{h}_0)^2d\mu\\
        &+\int_{\Sigma}\left(\eta_0+2F^2-F^{ij}\bar{R}_{3i3j}+\frac{1}{2}F^{ij}_{,ij}\right)\bar{h}_0(h_0-\bar{h}_0)d\mu.
    \end{align*}
    From Lemma \ref{mu1}, the left hand side is bounded below by
    \[ -\int_{\Sigma}(h_0-\bar{h}_0)\mathcal{L}(h_0-\bar{h}_0)d\mu\geq \left( \frac{1}{2\sigma^2}-\frac{m}{\sigma^3} -c\sigma^{-4} \right)\int_{\Sigma}(h_0-\bar{h}_0)^2d\mu. \]
    Together with 
    \[ \eta_0+2F^2-F^{ij}\bar{R}_{3i3j}=O(\sigma^{-4}) \]
    and (\ref{e-F-ij}), we obtain, for $\sigma\geq\sigma_0$,  
    \begin{equation}\label{h0}
        ||h_0-\bar{h}_0||_{L^2}\leq c\sigma^{-2}|\bar{h}_0||\Sigma|^{\frac{1}{2}}.
    \end{equation}
    In particular, $\bar{h}_0\not=0$. Let $\eta_1$ be the next eigenvalue of $S$ with corresponding eigenfunction $h_1$.
    Let $\bar{h}_1=|\Sigma|^{-1}\int_{\Sigma}h_1d\mu$ be the mean value of $h_1$. 
    Note that
    \[ 0=\int_{\Sigma}h_0h_1d\mu=\int_{\Sigma}(h_0-\bar{h}_0)(h_1-\bar{h}_1)d\mu+\int_{\Sigma}\bar{h}_0h_1d\mu. \]
    By H\"older inequality, we get
    \[ \left|\int_{\Sigma}h_1d\mu\right|\leq |\bar{h}_0|^{-1}||h_0-\bar{h}_0||_{L^2}||h_1-\bar{h}_1||_{L^2}. \]
    Then, by \eqref{h0}, there holds
    \begin{equation}\label{h1}
        |\bar{h}_1|\leq c\sigma^{-2}|\Sigma|^{-\frac{1}{2}}||h_1-\bar{h}_1||_{L^2}.
    \end{equation}
Multiplying $Sh_1=\eta_1 h_1$ with $(h_1-\bar{h}_1)$ and integrating it over $\Sigma$,  we obtain by Lemma \ref{strc} and \eqref{h1},
\begin{align*}
&\left(\frac{3m}{2\sigma^3}-c\sigma^{-4}\right)\int_{\Sigma}(h_1-\bar{h}_1)^2d\mu\\
    \leq& \int_{\Sigma}(h_1-\bar{h}_1)S(h_1-\bar{h}_1)d\mu\\
    =& \eta_1\int_{\Sigma}(h_1-\bar{h}_1)^2d\mu+\bar{h}_1\int_{\Sigma}(h_1-\bar{h}_1)\left(2F^2-F^{kl}\bar{R}_{3k3l}+\frac{1}{2}F^{ij}_{,ij}\right)d\mu\\
    \leq &\eta_1\int_{\Sigma}(h_1-\bar{h}_1)^2d\mu
    +c\sigma^{-4}|\bar{h}_1|\int_{\Sigma}|h_1-\bar{h}_1|d\mu\\
    \leq &\eta_1\int_{\Sigma}(h_1-\bar{h}_1)^2d\mu+c\sigma^{-6}\int_{\Sigma}(h_1-\bar{h}_1)^2d\mu.
\end{align*}
Therefore, for $\sigma\geq\sigma_0$, 
\[ \eta_1\geq cm\sigma^{-3}. \]
Hence, we show that $S$ is invertible, and $|S^{-1}|\leq cm^{-1}\sigma^3$.
\end{proof}

We are now ready to use Lemma \ref{fol} to show that $\{\Sigma_{\sigma}\}$ form a foliation. 

\begin{thm}\label{local}
    There is $\sigma_0$ depending only on $C_0$ such that for all $\sigma\geq\sigma_0$, the constant harmonic mean curvature surfaces $\Sigma_{\sigma}$ constructed in Theorem \ref{main} constitute a proper foliation of $N\backslash B_{\sigma_0}(0)$. 
\end{thm}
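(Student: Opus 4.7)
The strategy is to verify the hypothesis of Lemma~\ref{fol}, which reduces the foliation statement to showing that the lapse function $u^{\sigma}:=\langle\partial_{\sigma}\phi^{\sigma},\nu\rangle$ admits a uniform lower bound of a definite sign (positive, after choosing the orientation) for all $\sigma\geq\sigma_{0}$. To make $\partial_{\sigma}\phi^{\sigma}$ meaningful, I would first establish smoothness of the family $\sigma\mapsto\Sigma_{\sigma}$ by an implicit function theorem argument, using invertibility of the symmetrized linearization $S$ provided by Lemma~\ref{inv}. Differentiating the identity $\mathscr{F}(\phi^{\sigma})=f_{\sigma}$ in $\sigma$ and invoking the first variation formula (only the normal component of $\partial_{\sigma}\phi^{\sigma}$ contributes, since $F\equiv f_{\sigma}$ along $\Sigma_{\sigma}$) then produces the lapse equation
\[
L\,u^{\sigma}=-\frac{df_{\sigma}}{d\sigma},
\]
whose right-hand side is a spatial constant on $\Sigma_{\sigma}$.

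Next I would analyze this equation to leading order. By Proposition~\ref{2.2} one has $f_{\sigma}=\frac{1}{2\sigma}-\frac{m}{\sigma^{2}}+O(\sigma^{-3})$, so $-df_{\sigma}/d\sigma=\frac{1}{2\sigma^{2}}+O(\sigma^{-3})$ carries a definite sign of size $\sigma^{-2}$. I would decompose $u^{\sigma}=\bar{u}+\tilde{u}$ with $\bar{u}$ the mean over $\Sigma_{\sigma}$ and $\int_{\Sigma_{\sigma}}\tilde{u}\,d\mu=0$. Integrating the lapse equation and combining the expansion~\eqref{fr} for $2F^{2}-F^{ij}\bar{R}_{3i3j}$ with the bounds $|F^{ij}_{,i}|=O(\sigma^{-3})$ and $|F^{ij}_{,ij}|=O(\sigma^{-4})$ from Lemma~\ref{lem1.9} and~\eqref{e-F-ij}, the balance $L(\bar{u})\sim -(2F^{2}-F^{ij}\bar{R}_{3i3j})\bar{u}$ pins $\bar{u}$ to size $1$ with the correct sign. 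For the mean-zero part, I would test the equation against $\tilde{u}$ and exploit the spectral gap $\eta_{1}\geq cm\sigma^{-3}$ on the mean-zero subspace established inside the proof of Lemma~\ref{inv}, obtaining $\|\tilde{u}\|_{L^{2}}\leq c\sigma^{-1}|\bar{u}|\,|\Sigma_{\sigma}|^{1/2}$. Standard elliptic Schauder estimates, applied to the equation $\mathcal{L}\tilde{u}=$ (lower order in $\bar u,\tilde u$), upgrade this to a $C^{0}$ bound, so that $u^{\sigma}$ retains the sign and order of $\bar{u}$ uniformly on $\Sigma_{\sigma}$.

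The main obstacle is the lack of self-adjointness of $L$, which blocks a direct spectral decomposition. My plan is to write $L=S+T$, where $T:=L-S=F^{ij}_{,i}\nabla_{j}+\tfrac{1}{2}F^{ij}_{,ij}$ by~\eqref{e-adjoint-L} and~\eqref{e-S} is a first-order operator whose coefficients decay like $\sigma^{-3}$ and $\sigma^{-4}$ respectively. Thus $T$ acts as a genuinely lower-order perturbation of the self-adjoint $S$ throughout the above analysis, and in particular does not disturb the leading-order balance on either the constant or the mean-zero component. Once the uniform positivity $u^{\sigma}>c>0$ is achieved, Lemma~\ref{fol} applies directly and yields the proper foliation of $N\setminus B_{\sigma_{0}}(0)$ by the family $\{\Sigma_{\sigma}\}_{\sigma\geq\sigma_{0}}$.
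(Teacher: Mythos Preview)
Your approach is correct and follows the same strategic arc as the paper: derive an equation for the lapse via the linearization $L$, split $L=S+T$ with $T=F^{ij}_{,i}\nabla_j+\tfrac12F^{ij}_{,ij}$ a lower-order perturbation, use the spectral information for $S$ to show the lapse has a sign, and then invoke Lemma~\ref{fol}. The paper's execution differs in two noteworthy respects. First, instead of differentiating in $\sigma$ (which presupposes smooth dependence of $\Sigma_\sigma$ on $\sigma$), the paper works discretely: it writes $\Sigma_{\sigma_2}$ as a normal graph $u$ over $\Sigma_{\sigma_1}$ and uses Taylor's theorem to obtain $Su=\mathscr{F}(\phi^{\sigma_2})-\mathscr{F}(\phi^{\sigma_1})+E_1$ with $|E_1|\leq c\sigma^{-3}\|u\|_{C^2}$, thereby sidestepping the implicit function step you propose. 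Second, the paper decomposes $u=h_0+u_0$ along the lowest eigenfunction $h_0$ of $S$ rather than into mean and mean-zero; this diagonalizes $S$ exactly and, combined with the closeness estimate $\|h_0-\bar h_0\|_{C^{0,\alpha}}\leq c\sigma^{-\alpha-2}|\bar h_0|$ and Schauder theory for $u_0$, yields $|u-\bar h_0|\leq c\sigma^{-2}|\bar h_0|$. Your mean/mean-zero decomposition is morally equivalent (since $h_0$ is $O(\sigma^{-2})$-close to a constant) but requires absorbing small cross-terms.

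Two minor corrections: the bound you want on the mean-zero subspace is $\mu_0\geq\tfrac{3m}{2}\sigma^{-3}$ from Lemma~\ref{strc}, not $\eta_1$ from Lemma~\ref{inv} (which is the gap on the $h_0$-orthogonal complement); and the lapse equation should read $Lu^\sigma=\dfrac{df_\sigma}{d\sigma}$, without the minus sign, since $d\mathscr{F}(\phi^\sigma)\cdot\partial_\sigma\phi^\sigma=\partial_\sigma f_\sigma$. Neither affects the substance of your argument.
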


\begin{proof}
    Let $\Sigma_{\sigma}$ be the family of constant harmonic mean curvature surfaces constructed in Theorem \ref{main} with $\sigma\geq\sigma_0$, and $\phi^{\sigma}: S^2\rightarrow N$ be the embedding for each $\sigma$.

    For any $\sigma_2>\sigma_1\geq\sigma_0$ such that $\Sigma_{\sigma_i}\in \mathcal{B}_{\sigma}(B_1, B_2, B_3), i=1,2$, $\Sigma_{\sigma_2}$ can be represented by a normal variation over $\Sigma_{\sigma_1}$ of the form
    \[ \phi^{\sigma_2}=\phi^{\sigma_1}+u(p)\nu(p), \quad p\in S^2.
    \]
     We will show that $u$ has a sign; in particular, $u$ cannot be zero.
    In the following, we denote $\Sigma_{\sigma}$ by $\Sigma$.
    
Let $V=\phi^{\sigma_2}-\phi^{\sigma_1}$.
    By the Taylor theorem, there is some $\sigma\in(\sigma_1, \sigma_2)$ such that
    \begin{eqnarray}
        \mathscr{F}(\phi^{\sigma_2})&=&\mathscr{F}(\phi^{\sigma_1})+Lu+\frac{1}{2}d^2\mathscr{F}(\phi^{\sigma})(V, V)\nonumber\\
    &=&\mathscr{F}(\phi^{\sigma_1})+Su+(L-S)u+\frac{1}{2}d^2\mathscr{F}(\phi^{\sigma})(V, V),
    \end{eqnarray}
    where $\mathscr{F}(\phi^{\sigma_2})$ and $\mathscr{F}(\phi^{\sigma_1})$ are constants.
    Note that the second variation of the harmonic mean curvature operator involves the second fundamental form and higher derivatives of the metric, this leads to
    \begin{equation}\label{ddf}
        ||d^2\mathscr{F}(V,V)||\leq c\sigma^{-3}||V||^2\leq c\sigma^{-3}||u||_{C^{2}}.
    \end{equation}  
    We also have from the definition of $S$ that
       \begin{equation}\label{e-S-L}
        |(L-S)u|=\frac{|(L-L^*)u|}{2}=\left|F^{ij}_{,i}u_j+\frac{1}{2}F^{ij}_{,ij}u\right|\leq c\sigma^{-3}||u||_{C^{1}}.
    \end{equation}  
Hence, $u$ satisfies the following elliptic equation
\begin{equation}\label{lu}
    Su=\mathscr{F}(\phi^{\sigma_2})-\mathscr{F}(\phi^{\sigma_1})+E_1,
\end{equation}  
with the error term satisfying $|E_1|\leq c\sigma^{-3}||u||_{C^{2}}$.
We will employ Huang's approach of Theorem 3.9 in \cite{Huang}.
By integrating \eqref{lu} over $S^2$, 
\[ \mathscr{F}(\phi^{\sigma_2})-\mathscr{F}(\phi^{\sigma_1})=
        \frac{1}{|S^2|}\int_{S^2}Sud\mu-\frac{1}{|S^2|}\int_{S^2}E_1d\mu. \]
By the definition of $S$, we get
\begin{equation*}
    \begin{split}
        &\frac{1}{|S^2|}\int_{S^2}Sud\mu\\
        =& -\frac{1}{|S^2|}\int_{S^2}\left(\mathcal{L}u+2F^2u-F^{ij}\bar{R}_{3i3j}u+F^{ij}_{,i}u_j+\frac{1}{2}F^{ij}_{,ij}u\right)d\mu\\
        =& -\frac{1}{|S^2|}\int_{S^2}\left(\mathcal{L}u+2F^2u-F^{ij}\bar{R}_{3i3j}u-\frac{1}{2}F^{ij}_{,ij}u\right)d\mu\\
        =& \frac{1}{|S^2|}\int_{S^2}\left[-\frac{1}{4} \Delta u-\frac{1}{H^{2}}\left(\mathring{h}^{ij}-H{g}^{ij}\right)\mathring{h}_{jk} u_{ki}\right]d\mu-\left(\frac{1}{2\sigma^2}-\frac{5m}{2\sigma^3}+O(\sigma^{-4})\right)\bar{u}\\
        \leq & c\sigma^{-3}||u||_{C^1}-\left(\frac{1}{2\sigma^2}-\frac{5m}{2\sigma^3}+O(\sigma^{-4})\right)\bar{u}, 
    \end{split}
\end{equation*}
where $\bar{u}=\frac{1}{|S^2|}\int_{S^2}ud\mu$ is the mean value of $u$ and the last inequality is obtained by integration by parts.
It follows,
\begin{equation}\label{fphi}
    \mathscr{F}(\phi^{\sigma_2})-\mathscr{F}(\phi^{\sigma_1})=-\frac{1}{2\sigma^2}\bar{u}+E_2,
\end{equation}
where 
\[ |E_2|\leq c\sigma^{-3}||u||_{C^1}+c\sigma^{-3}||u||_{C^2}. \]
Now we decompose $u=h_0+u_0$, where $h_0$ is the lowest eigenfunction of $S$ and $\int_{S^2}h_0u_0d\mu=0$.

Due to De Giorgi-Nash-Moser iteration and \eqref{h0}, there holds the estimate
\[ \sup|h_0-\bar{h}_0|\leq c\sigma^{-2}|\bar{h}_0|. \]
We note that $(h_0-\bar{h}_0)$ satisfies the following equation
\[ S(h_0-\bar{h}_0)=\eta_0(h_0-\bar{h}_0)+\left(\eta_0+2F^2-F^{ij}\bar{R}_{3i3j}+\frac{1}{2}F^{ij}_{,ij}\right)\bar{h}_0. \]
And hence we obtain 
\begin{equation}\label{hal}
\begin{split}
    &||h_0-\bar{h}_0||_{C^{0, \alpha}}\\
    \leq& c\left(\sigma^{-\alpha}\sup_{S^2}|h_0-\bar{h}_0|+|\bar{h}_0|\norm{\eta_0+2F^2-F^{ij}\bar{R}_{3i3j}+\frac{1}{2}F^{ij}_{,ij}}_{L^2}\right)\\
    \leq & c\sigma^{-\alpha-2}|\bar{h}_0|,
\end{split}     
\end{equation}
for some $\alpha\in(0, 1)$.

Since $u_0=u-h_0$, \eqref{lu} and \eqref{fphi} imply
\begin{equation*}
    \begin{split}
        Su_0=& Su-Sh_0=-\frac{1}{2\sigma^2}\bar{u}+E_2+E_1-\eta_0h_0\\
        =& \frac{1}{2\sigma^2}(h_0-\bar{h}_0)-\frac{1}{2\sigma^2}\bar{u}_0+E_3+E_2+E_1,
    \end{split}
\end{equation*}
where $|E_3|\leq c\sigma^{-3}|\bar{h}_0|$.
Because $S$ has no kernel,
the Schauder estimate and \eqref{hal} gives
\begin{equation}\label{u0c2}
    \begin{split}
        \norm{u_0}_{C^{2, \alpha}}\leq & c(\sigma^{-2}\norm{h_0-\bar{h}_0}_{C^{0, \alpha}}+\sigma^{-2}|\bar{u}_0|+\sigma^{-3}\norm{u}_{C^{2, \alpha}}+\sigma^{-3}|\bar{h}_0|)\\
        \leq & c(\sigma^{-\alpha-4}|\bar{h}_0|+\sigma^{-2}|\bar{u}_0|+\sigma^{-3}(\norm{u_0}_{C^{2, \alpha}}+\norm{h_0}_{C^{2, \alpha}})+\sigma^{-3}|\bar{h}_0|).
    \end{split}
\end{equation}
Again since $h_0$ satisfies $Sh_0=\eta_0h_0$ and $\eta_0=O(\sigma^{-2})$, inequality \eqref{hal} leads to
\[ \norm{h_0}_{C^{2, \alpha}}\leq c\sigma^{-2}\norm{h_0}_{C^{0, \alpha}}\leq c\sigma^{-2}(\norm{h_0-\bar{h}_0}_{C^{0, \alpha}}+|\bar{h}_0|)\leq c\sigma^{-2}|\bar{h}_0|. \]
Inserting this into \eqref{u0c2}, we arrive at
\begin{equation}\label{u0}
    \norm{u_0}_{C^{2, \alpha}}\leq c(\sigma^{-3}|\bar{h}_0|+\sigma^{-2}|\bar{u}_0|),
\end{equation}
for $\sigma\geq\sigma_0$. Notice the fact that 
\[0=\int_{S^2}h_0u_0d\mu=\int_{S^2}u_0(h_0-\bar{h}_0)+\bar{h}_0\int_{S^2}u_0,\] 
this implies 
\[ |\bar{u}_0|\leq |\bar{h}_0|^{-1}\sup |h_0-\bar{h}_0|\sup |u_0|\leq c\sigma^{-2}|u_0|_{C^{2, \alpha}}. \]
Then combining with \eqref{u0} we arrive at the following inequality
\[ \norm{u_0}_{C^{2, \alpha}}\leq c\sigma^{-3}|\bar{h}_0|. \]
Therefore, we obtain
\[ |u-\bar{h}_0|\leq |u_0|+|h_0-\bar{h}_0|\leq c\sigma^{-2}|\bar{h}_0|,\]
which shows that $u$ has a sign since $\bar{h}_0$ has a sign.
Finally, an application of Lemma \ref{fol} completes the proof.

\end{proof}

We will now define the ``center of gravity" as the constant harmonic mean curvature foliation approaches infinity. Following the way of Corvino-Wu in \cite{Corvino}, we will show that the center of mass defined by the constant harmonic mean curvature foliation is equivalent to the ADM center of mass.
We first present some definitions and necessary propositions.

The second derivative of the second fundamental form can be similarly estimated as Proposition \ref{3.10}, see the appendix in \cite{Corvino}. Together with Lemma \ref{lem1.4}, we can derive the following lemma.
\begin{lem}\label{2of}
    Let $\Sigma_{\sigma}$ be the constant harmonic mean curvature surface constructed in Theorem \ref{main}. Then, for all $t\geq0$ and $\sigma\geq\sigma_0$ large enough,
    \[ |\n^2F|\leq c\sigma^{-5}, \]
    where $c$ depends on $C_0, B_1, B_2, B_3$.
\end{lem}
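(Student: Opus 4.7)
The plan is to bootstrap the existing first-derivative estimate $|\nabla A|\leq c\sigma^{-4}$ of Proposition \ref{3.10} up to second order, obtaining $|\nabla^2 A|\leq c\sigma^{-5}$, and then convert this bound into the desired control on $\nabla^2 F$ via the chain rule for the harmonic mean curvature viewed as a smooth function of the second fundamental form. Following the template of Lemma \ref{na} and Proposition \ref{3.10}, I would first differentiate the evolution equation for $\mathring{h}_{ij}$ (item (3) of Lemma \ref{lem1.5}) twice in the tangential direction and pair the result with $\nabla^2\mathring{A}$. After commuting $\nabla^2$ with $\mathcal{L}$ and tracking the error terms, which split into three classes---derivatives $d^k F$ of order up to three, controlled by Lemma \ref{lem1.9}; covariant derivatives of the ambient curvature, each gaining a factor $\sigma^{-1}$; and products of lower-order terms already bounded by Propositions \ref{prop1.8} and \ref{3.10}---one should obtain a parabolic inequality of the schematic form
$$\partial_t |\nabla^2 \mathring{A}|^2 \leq \mathcal{L}|\nabla^2 \mathring{A}|^2 + c\sigma^{-2}|\nabla^2 \mathring{A}|^2 + cC_0|\nabla^2 \mathring{A}|\sigma^{-8}.$$
Coupling this with $C_1|\nabla \mathring{A}|^2\sigma^{-2}$ and applying the parabolic maximum principle exactly as in Proposition \ref{3.10} then yields $|\nabla^2 \mathring{A}|\leq c\sigma^{-5}$. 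The parallel calculation for the volume-preserving mean curvature flow is carried out in the appendix of \cite{Corvino}, to which the statement explicitly refers.

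Next I invoke the $k=2$ case of Lemma \ref{lem1.4}: combined with the identity $|\nabla^2 A|^2 = |\nabla^2 \mathring{A}|^2 + \tfrac{1}{2}|\nabla^2 H|^2$ and the bound $|\nabla w|\leq cC_0\sigma^{-5}$ on the Ricci-type projection (which merely gains one derivative on the ambient curvature decay), this yields $|\nabla^2 H|\leq c\sigma^{-5}$, hence $|\nabla^2 A|\leq c\sigma^{-5}$. Finally, treating $F$ as a smooth function of $h^i_j$, the chain rule gives
$$\nabla_k\nabla_l F = F^{ij}\,\nabla_k\nabla_l h_{ij} + F^{ij,pq}\,\nabla_k h_{ij}\,\nabla_l h_{pq}.$$
By Lemma \ref{lem1.9} one has $|F^{ij}|\leq cC_0^2$ and $|F^{ij,pq}|\leq cC_0^2\sigma$; combined with the bounds just obtained together with $|\nabla A|^2\leq c\sigma^{-8}$ (established in the proof of Proposition \ref{3.10}), the first term is of order $\sigma^{-5}$ and the second of order $\sigma^{-7}$, which delivers the claim.

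The genuine obstacle lies entirely in the first step. Differentiating the already intricate evolution equation for $\mathring{h}_{ij}$ once more produces a large number of cross terms involving up to $\nabla^3 h_{ij}$ (through the commutator $[\nabla,\mathcal{L}]$) and third-order derivatives $d^3 F$ (through $F^{kl,pq,rs}\,\nabla h\,\nabla h\,\nabla h$ contributions arising from $\nabla^2 \mathcal{L} h$). Each such term must be absorbed either into the good gradient term $-F^{kl}\nabla^2\mathring{h}_{ij,k}\,\nabla^2\mathring{h}_{ij,l}$ via Cauchy--Schwarz, or into already-controlled lower-order quantities via the estimates of Lemma \ref{lem1.7}. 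Once this bookkeeping is done, the remaining steps are straightforward algebraic consequences.
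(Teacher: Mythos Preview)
Your proposal is correct and follows essentially the same route as the paper: the paper's proof consists of the single sentence ``The second derivative of the second fundamental form can be similarly estimated as Proposition \ref{3.10}, see the appendix in \cite{Corvino}. Together with Lemma \ref{lem1.4}, we can derive the following lemma,'' and you have accurately reconstructed exactly those three steps---the parabolic maximum-principle bound on $|\nabla^2\mathring{A}|$ \`a la Proposition \ref{3.10}/\cite{Corvino}, the passage to $|\nabla^2 A|$ via the $k=2$ case of Lemma \ref{lem1.4}, and the chain-rule conversion to $|\nabla^2 F|$ using Lemma \ref{lem1.9}.
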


\begin{defn}
    Assume $(M,g)$ be an asymptotically Schwarzschild manifold with metric $g_{ij}=\left(1+\frac{m}{2|x|} \right)^4\delta_{ij}+O_2(|x|^{-2})$, the ADM center of mass is defined by 
    \[ C^k=\frac{1}{16m\pi}\lim_{R\to\infty}\int_{|x| = R}\left[\sum_{i}x^{k}\left(g_{ij,i}-g_{ii,j}\right)v_{e}^{j}d\mu_{e}-\sum_{i}\left(g_{ik}v_{e}^{i}-g_{ii}v_{e}^{k}\right)d\mu_{e}\right],\]
    where $\nu_e$ is the normal vector with respect to the Euclidean metric $\delta$, $k=1, 2, 3$.
\end{defn}

Consider an exterior region in an asymptotically flat manifold, and assume that there is an asymptotically flat chart in which $g_{ij}=\left(1+\frac{m}{2|x|}+\frac{B_{k}x^{k}}{|x|^{3}}\right)^{4}\delta_{ij}+O_{5}\left(|x|^{-3}\right)$ with $m>0$. By direct calculation, the ADM center of mass is 
\begin{equation}\label{center}
    C^k=\frac{2B_k}{m}. 
\end{equation}

\begin{defn}
    Let $\Sigma_{\sigma}$ be the family of surfaces constructed in Theorem \ref{local} and $\phi^{\sigma}$ be the position vector.
     The center of gravity of $\Sigma_{\sigma}$ is defined as
    \[ C_{HM}=\lim_{\sigma\rightarrow\infty}\frac{\int_{\Sigma_{\sigma}}\phi^{\sigma} d\mu_e}{\int_{\Sigma_{\sigma}}d\mu_e}. \]
\end{defn}

We note that translating the coordinates by a vector $a^k$ shifts both the ADM center of mass and the center $C_{HM}$ by $a^k$. Thus, we can adjust the coordinates to set the ADM center of mass vector to be zero, which results in $B_k=0$ in this chart by \eqref{center}. To prove Theorem \ref{thm-center} saying that $C_{HM}$ is equivalent to the ADM center of mass,  it suffices to consider the following case.  

\begin{thm}
    Consider an exterior region in an asymptotically flat manifold, and assume that there is an asymptotically flat chart in which $g_{ij}=\left(1+\frac{m}{2|x|}\right)^{4}\delta_{ij}+O_{5}\left(|x|^{-3}\right)$ with $m>0$.
    Then $C_{HM}=0$.
\end{thm}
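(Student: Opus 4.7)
The plan is to show that the vector $\vec{a}$ from Proposition \ref{2.1}, which pins down the approximate Euclidean center of $\Sigma_\sigma$, tends to zero as $\sigma\to\infty$; from this, the limit defining $C_{HM}$ will vanish. For each $\sigma\geq\sigma_0$, Proposition \ref{2.1} gives constants $r_0(\sigma)\in\mathbb{R}$ and $\vec{a}(\sigma)\in\mathbb{R}^3$ so that on $\Sigma_\sigma$ one has $\phi^\sigma=\vec{a}+r_0\nu_e+E$ with $|E|\leq c(C_0+B_1+B_2+B_3)\sigma^{-1}$. Since $\int_{\Sigma_\sigma}\nu_e\,d\mu_e=0$ by the divergence theorem applied to the Euclidean domain enclosed by $\Sigma_\sigma$, one obtains
\[
\frac{\int_{\Sigma_\sigma}\phi^\sigma\,d\mu_e}{\int_{\Sigma_\sigma}d\mu_e}=\vec{a}(\sigma)+O(\sigma^{-1}),
\]
so the problem reduces to proving $\lim_{\sigma\to\infty}\vec{a}(\sigma)=0$.

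To control $\vec{a}(\sigma)$ I would exploit the fact that $F$ is \emph{constant} along $\Sigma_\sigma$. Testing $F-f_\sigma\equiv 0$ against the Euclidean outer normal components $\nu_e^k$ (for $k=1,2,3$) and using Proposition \ref{2.2} together with the refined asymptotic $g_{ij}=(1+m/(2|x|))^4\delta_{ij}+O_5(|x|^{-3})$, one derives an identity of the form
\[
0=\int_{\Sigma_\sigma}(F-f_\sigma)\nu_e^k\,d\mu_e = -\frac{3m}{2r_0^3}\int_{\Sigma_\sigma}\langle\vec{a},\nu_e\rangle_e\nu_e^k\,d\mu_e+\mathcal{R}_k(\sigma),
\]
where $\mathcal{R}_k(\sigma)$ collects the contributions of the nonconformal perturbation $O_5(|x|^{-3})$ of the metric, of the $O(\sigma^{-4})$ error term in Proposition \ref{2.2}, and of $\int\nu_e^k\,d\mu_e=0$. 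Using $\int_{\Sigma_\sigma}\nu_e^i\nu_e^k\,d\mu_e=\tfrac{4\pi r_0^2}{3}\delta_{ik}+O(\sigma)$ (by Proposition \ref{2.1}), the leading term produces $-\tfrac{2\pi m}{r_0}\,a_k$. Thus one obtains a linear equation $m\,a_k=O(\mathcal{R}_k(\sigma)/\sigma^{-1})$, from which the decay $a_k\to 0$ will follow provided $\mathcal{R}_k(\sigma)=o(\sigma^{-1})$.

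The main obstacle is establishing exactly this decay of $\mathcal{R}_k(\sigma)$. The crucial point is that the hypothesis carries \emph{no} $B_k x^k/|x|^3$ term in the conformal factor, so all non-Schwarzschild metric contributions to $F$ on a round surface centered near the origin enter at order $O(\sigma^{-4})$ rather than the dangerous $O(\sigma^{-3}\langle x,\cdot\rangle)$ order; integrated against $\nu_e^k$ over $\Sigma_\sigma$ (area $\sim\sigma^2$) this yields $\mathcal{R}_k=O(\sigma^{-2})$. Inserting this into the identity above gives $|\vec{a}(\sigma)|\leq cm^{-1}\sigma^{-1}\to 0$. Combined with the first paragraph, this proves $C_{HM}=0$.

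The computations needed are a careful expansion of the linearized harmonic mean curvature operator and of $F$ itself to one order beyond Proposition \ref{2.2} under the stronger decay $O_5(|x|^{-3})$, followed by standard spherical harmonics orthogonality $\int_{S^2}\nu_e^j\nu_e^k=\tfrac{4\pi}{3}\delta_{jk}$, $\int_{S^2}\nu_e^k=0$ for odd terms, and the a priori estimates on $|\mathring{A}|$ and $|\nabla\mathring{A}|$ from Propositions \ref{prop1.8} and \ref{3.10} to bound all remainder integrals. Once $\vec{a}(\sigma)\to 0$ is established, the first paragraph concludes the proof.
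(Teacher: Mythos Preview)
Your approach is a genuinely different route from the paper's. The paper never analyzes the limit surface $\Sigma_\sigma$ via a balancing identity; instead it exploits the flow construction. Under the stronger decay $P=O_5(|x|^{-3})$ the initial coordinate sphere $S_\sigma(0)$ satisfies $|F-f|=O(\sigma^{-4})$, hence $\int_{\Sigma_0}(F-f)^2\,d\mu_0\leq C\sigma^{-6}$. Feeding this into the exponential $L^2$ decay \eqref{expo} and interpolating with the $C^2$ bound of Lemma \ref{2of} yields the pointwise speed estimate $|F-f|\leq C\sigma^{-7/2}e^{-mt/(2\sigma^3)}$ along the flow. Integrating $\partial_t\phi^\sigma=(f-F)\nu$ over $t\in[0,\infty)$ then gives total displacement $|\phi^\sigma_\infty-\phi^\sigma_0|\leq C\sigma^{-1/2}$, so the Euclidean centroid of $\Sigma_\sigma$ differs from that of the centered sphere $S_\sigma(0)$ by $O(\sigma^{-1/2})\to 0$. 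Your static balancing argument (testing $F=\text{const}$ against $\nu_e^k$) is closer in spirit to Huang's method and, if it works, has the advantage of not using the flow at all.

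There is, however, a real gap in your bound $\mathcal{R}_k=O(\sigma^{-2})$. You correctly observe that the metric-perturbation contribution to the error in Proposition \ref{2.2} drops to $O(\sigma^{-4})$ under the stronger decay, and that even Schwarzschild terms such as $m^2/r_0^3$ annihilate against $\nu_e^k$. But the $B_2,B_3$ contributions in Proposition \ref{2.2}---the \emph{shape error} coming from $\Sigma_\sigma$ not being an exact Euclidean sphere---remain $O(\sigma^{-3})$ pointwise: Proposition \ref{prop1.8} gives $|\mathring{A}|\leq cC_0\sigma^{-3}$ and its proof (via Lemma \ref{lem1.7}(3), which uses $|\bar R_{11}-\bar R_{22}|\leq cC_0\sigma^{-4}$ from the \emph{Schwarzschild} part) does not improve when $P$ decays faster. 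Integrated against $\nu_e^k$ over area $\sim\sigma^2$ this shape error is a priori only $O(\sigma^{-1})$, which is exactly the borderline that makes your conclusion $|\vec a|\leq cm^{-1}\sigma^{-1}$ fail. To close your argument you would need an additional ingredient, for instance the observation that the linearized operator at the round sphere satisfies $(\Delta_{S^2}+2)\nu_e^k=0$, so that the \emph{linear} shape deviation is orthogonal to the test functions and only the quadratic part $O(\sigma^{-4})$ survives in $\mathcal{R}_k$; alternatively one could bootstrap $|\mathring{A}|$ to $O(\sigma^{-4})$ under the stronger decay. Either is plausible but neither appears in your proposal, and the paper's flow argument sidesteps the issue entirely.
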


\begin{proof}
    By the Sobolev embedding inequality ((3.1) in \cite{Corvino}), for $r>2$ and all $t\geq0$, there is a constant $c(r)$, such that 
\begin{equation}\label{inter}
    ||F-f||_{C^0}\leq c\sigma^{-\frac{2}{q}}(\sigma||\n F||_{L^q}+||F-f||_{L^r}^{\frac{1}{q}}||F-f||_{L^2}^{1-\frac{1}{q}}), 
\end{equation} 
where $q=\frac{r}{2}+1$ and norms are calculated on $\Sigma_t$ which is a solution to \eqref{flow} contained in $\mathcal{B}_{\sigma}(B_1, B_2, B_3)$.
At $t=0$, we are working in a coordinate system that $g_{ij}-g^S_{ij}=O_5(|x|^{-3})$, with $g^S_{ij}$ being the Schwarzschild metric. 
Thus, the difference between the second fundamental form of $\Sigma_t$ is 
\[ h_{ij}-h^S_{ij}=O(|x|^{-4}), \]
    which implies that $|F-f|=O(\sigma^{-4})$.
From \eqref{expo}, for all $t\geq0$,
\[ \int_{\Sigma_t}(F-f)^2d\mu_t\leq C\sigma^{-6}e^{-\frac{2mt}{\sigma^3}}. \]
To derive a pointwise bound from \eqref{inter}, we utilize the interpolation inequality from \cite{Hamilton} 
to estimate, for $\frac{1}{p}+\frac{1}{2}=\frac{2}{q}$,
\begin{equation*}
    \begin{split}
        \sigma^{1-\frac{2}{q}}||\n F||_{L^q}
       \leq & q\sigma^{1-\frac{2}{q}}||\n^2F||_{L^p}^{\frac{1}{2}}||F-f||_{L^2}^{\frac{1}{2}} \\
       \leq & C \sigma^{1-\frac{2}{q}}(\sigma^{-5}\sigma^{\frac{2}{p}})^{\frac{1}{2}}\cdot(\sigma^{-6}e^{-\frac{2mt}{\sigma^3}})^{\frac{1}{4}}\\
       = & C\sigma^{-\frac{7}{2}}e^{-\frac{mt}{2\sigma^3}},
    \end{split}
\end{equation*} 
and 
\begin{equation*}
    \begin{split}
        &\sigma^{-\frac{2}{q}}||F-f||_{L^r}^{\frac{1}{q}}||F-f||_{L^2}^{1-\frac{1}{q}}\\
        \leq & C\sigma^{-\frac{2}{q}}\cdot\left(\sigma^{-3}\sigma^{\frac{2}{r}}\right)^{\frac{1}{q}}\cdot\left(\sigma^{-6}e^{-2mt\sigma^{-3}}\right)^{\frac{1}{2}\left(1-\frac{1}{q}\right)}\\
        = & C\sigma^{-3-\frac{2}{q}\left(1-\frac{1}{r}\right)}\cdot\left(e^{-2mt\sigma^{-3}}\right)^{\frac{1}{2}\left(1-\frac{1}{q}\right)}\\
        \leq & C\sigma^{-\frac{7}{2}}e^{-\frac{mt}{2\sigma^3}}.
    \end{split}
\end{equation*}
In the last inequality, we have selected that $r\in (2, 4)$.
Therefore, combining with \eqref{inter} we get on $\Sigma_t$ that, for all $t\geq0$ and $\sigma\geq\sigma_0$,
\[ |F-f|\leq C\sigma^{-\frac{7}{2}}e^{-\frac{mt}{2\sigma^3}}. \]
By integrating the flow equation $\frac{\partial}{\partial t}\phi^{\sigma}=(f-F)\nu$, and $\partial_t(d\mu_g)=H(f - F)d\mu_g$, we obtain 
\begin{align*}
    |\phi^{\sigma}_{\infty}-\phi^{\sigma}_0|\leq & C\sigma^{-\frac{1}{2}},\\
    |(\phi^{\sigma}_{\infty})^*(d\mu_g)-(\phi^{\sigma}_0)^*(d\mu_g)|\leq & C\sigma^{-\frac{3}{2}}|(\phi^{\sigma}_0)^*(d\mu_g)|.
\end{align*}
  Finally, we have the center of mass 
  \[ C_{HM}=\lim_{\sigma\rightarrow\infty}\frac{\int_{\Sigma_{\sigma}}\phi^{\sigma} d\mu_e}{\int_{\Sigma_{\sigma}}d\mu_e}
  =\lim_{\sigma\rightarrow\infty}\frac{\int_{S^2}\phi^{\sigma}_{\infty}d\mu^{\sigma}}{\int_{S^2}d\mu^{\sigma}}=\lim_{\sigma\rightarrow\infty}O(\sigma^{-\frac{1}{2}})=0,\]
where $d\mu^{\sigma}=(\phi^{\sigma}_{\infty})^*(d\mu_e)$.
    
\end{proof}
\vspace{.2in}

\section{Appendix}
In this appendix, we give the details of the 1st variation of the $F$-operator. We begin with the calculation of the $H$-operator with respect to arbitrary variation. So let $V_t$ be the variation vector field, that is, 
\[
\pd t\phi_t=V_t.
\]
As before, we let $e_i(t)=d\phi_t(e_i)$, then $\pa_ te_i(t)=\bar{\n}_{e_i}V$. Recall the variation formula 
 \begin{enumerate}
         \item[i)]
$$
\begin{aligned}
\frac{d}{dt}g_{ij}=\<\bar{\n}_{e_i}V,e_j\>+\<e_i,\bar{\n}_{e_j}V\>,\\
 \f d{dt}g^{ij}=-g^{ik}g^{jl}(\<\bar{\n}_{e_k}V,e_l\>+\<e_k,\bar{\n}_{e_l}V\>)\\
\frac{d}{dt}\nu=-\<\nu,\bar{\n}_{e_i}V\>e_i.
\end{aligned}
$$
\item[ii)]
\begin{align*}
        \pd th_{ij} & = \pd t\<\bar{\n}_{e_i}\nu,e_j\>=\<\bar{\n}_{e_i}\pa_t\nu+\bar{R}(V_t,e_i)\nu,e_j\>+\<\bar{\n}_{e_i}\nu,\pa_te_j\>\\
        & = -\<\nu,\bar{\n}_{e_k}V\>\<\bar{\n}_{e_i}e_k,e_j\>-\<\nu,\bar{\n}_{e_i}\bar{\n}_{e_j}V\>+\bar{R}(V,e_i,\nu,e_j)\\
        & = -\<\nu,\bar{\n}^2V(e_i,e_j)\>+\bar{R}(V,e_i,\nu,e_j).
 \end{align*}
  \end{enumerate}
Now, the mean curvature operator is given by 
\begin{align*}
    \pd tH=\pd t(g^{ij}h_{ij}) & = \pd tg^{ij}h_{ij}+g^{ij}\pd th_{ij}\\
    & = -h^{kl}\left(\<\bar{\n}_{e_k}V,e_l\>+\<e_k,\bar{\n}_{e_l}V\>\right)-\bar{R}ic(V,\nu)-\<\nu,\Delta V\>
\end{align*}
Note that 
\begin{align*}
    \Delta\<\nu,V\> & = \<\nu,\bar{\n}_{e_i}\bar{\n}_{e_i}V\>+\<V,\bar{\n}_{e_i}\bar{\n}_{e_i}\nu\>+2\<\bar{\n}_{e_i}V,\bar{\n}_{e_i}\nu\>\\
    & = \<\nu,\bar{\n}_{e_i}\bar{\n}_{e_i}V\>+\<V,\bar{\n}_{e_i}(h_{ij}e_j)\>+2\<\bar{\n}_{e_i}V,\bar{\n}_{e_i}\nu\>\\
    & = \<\nu,\bar{\n}_{e_i}\bar{\n}_{e_i}V\>+2\<\bar{\n}_{e_i}V,\bar{\n}_{e_i}\nu\>+h_{ij,i}\<e_j,V\>+h_{ij}\<V,\bar{\n}_{e_i}e_j\>\\
    & = \<\nu,\bar{\n}_{e_i}\bar{\n}_{e_i}V\>+2\<\bar{\n}_{e_i}V,\bar{\n}_{e_i}\nu\>+\<H_je_j,V\>-\bar{R}_{3iji}\<e_j,V\>-\abs{A}^2\<V,\nu\>,
\end{align*}
and 
\[
\bar{R}ic(V,\nu)=\bar{R}ic(\nu,\nu)\<V,\nu\>+\bar{R}ic(e_j,V)\<e_j,V\>.
\]
We thus get 
\[
\pd tH=-\Delta\<V,\nu\>-\left(\bar{R}ic(\nu,\nu)+\abs{A}^2\right)\<V,\nu\>+\<\n H,V\>.
\]
Now we proceed the computation of the $F$-operator. Note first that 
\begin{align*}
    F^{ij}\<V,\nu\>_{ij} & =F^{ij}\left(\<\nu,\bar{\n}_{e_i}\bar{\n}_{e_j}V\>+\<V,\bar{\n}_{e_i}\bar{\n}_{e_j}\nu\>+2\<\bar{\n}_{e_i}V,\bar{\n}_{e_j}\nu\>\right)\\
    & = F^{ij}\<\nu,V_{ij}\>+F^{ij}h_{jk,i}\<V,e_k\>-F^{ij}h_{jk}h_{ik}\<V,\nu\>+2F^{ij}\<\bar{\n}_{e_i}V,\bar{\n}_{e_j}\nu\>\\
    & =  F^{ij}\<\nu,V_{ij}\>+\left(\<V,\n F\>-F^{ij,pq}h_{pq,k}h_{ij}\<V,e_k\>-F^{ij}\bar{R}_{3jki}\<V,e_k\>\right)\\
    & -F^{ij}h_{jk}h_{ik}\<V,\nu\>+2F^{ij}\<\bar{\n}_{e_i}V,\bar{\n}_{e_j}\nu\>
\end{align*}
We hence obtain 
\begin{align*}
    \pd tF  =& F^{ij}\pd t h_{ij}+F^{ij}\pd tg^{ik}h_{kj}\\
     =& -F^{ij}\<\nu,V_{ij}\>+F^{ij}\bar{R}(V,e_i,\nu,e_j)-F^{ij}h_{kj}(g^{ip}g^{kl}(\<\bar{\n}_{e_p}V,e_l\>+\<e_p,\bar{\n}_{e_l}V\>))\\
     =& -\mathcal{L}\<V,\nu\>-F^{ij}\left(h_{jk}h_{ik}-\bar{R}(V,e_i,\nu,e_j)\right)\<V,\nu\>\\
    &+\<V,\n F\>-F^{ij,pq}h_{pq,k}h_{ij}\<V,e_k\>.
\end{align*}
We now conclude that the 1st variation of the $F$-operator with the constant harmonic mean curvature is given by 
\[
d\mathscr{F}(\phi)\cdot V=-\left(\mathcal{L}+F^{ij}h_{jk}h_{ik}-F^{ij}\bar{R}(\nu,e_i,\nu,e_j)\right)\<V,\nu\>,
\]
where $\phi:S^2\rightarrow{N}$ is the embedding of the constant harmonic mean curvature surface.
Define the operator 
\[
L=-\left(\mathcal{L}+F^{ij}h_{jk}h_{ik}-F^{ij}\bar{R}(\nu,e_i,\nu,e_j)\right).
\]
This operator is then interpreted as the first variation of the constant harmonic mean curvature surface in the normal direction.

\vspace{.2in}
\textbf{Conflict of Interest} The authors have no conflict of interest to declare.

\vspace{.1in}
\textbf{Data availability} The authors declare no datasets were generated or analysed during the current study.

\bibliography{refofmix}
\bibliographystyle{alpha}

\end{document}